\documentclass[oneside,12pt]{amsart}
\usepackage{amssymb, mathrsfs, amscd, mathabx}
\usepackage{tikz, tikz-cd}
\usepackage[all]{xy}
\usepackage{enumerate, amsfonts, amsthm}

\usepackage{hyperref}
\hypersetup{
  pdfborder={0 0 0},
  colorlinks   = true,
  urlcolor     = blue,
  linkcolor    = blue,
  citecolor   = red
}

\usepackage{cleveref}

\usepackage[textsize=tiny]{todonotes}
\setuptodonotes{fancyline, color=blue!30}

\usepackage{microtype} 
\usepackage{lmodern}   

\usepackage[a4paper,margin=1.1in]{geometry}

\newtheorem{theorem}{Theorem}[section]

\newtheorem{cor}[theorem]{Corollary}
\newtheorem{prop}[theorem]{Proposition}
\newtheorem{lem}[theorem]{Lemma}
\newtheorem{rmk}[theorem]{Remark}
\newtheorem{deff}[theorem]{Definition}
\newtheorem{example}[theorem]{Example}
\newtheorem{conjecture}[theorem]{Conjecture}

\newcommand{\del}[2]{{}}

\newcommand{\Z}{\mathbb Z}
\newcommand{\R}{\mathbb R}

\newcommand{\G}{\Gamma}
\newcommand{\bG}{\overline{\Gamma}}

\newcommand{\C}{\mathcal C}

\newcommand{\GG}{G_{\G}}
\newcommand{\KG}{K_{\G}}

\newcommand{\cd}{\mathrm{cd}}
\newcommand{\ugd}{\underline{\mathrm{gd}}}

\newcommand{\vcd}{\mathrm{vcd}}

\newcommand{\uE}{\underline{E}}

\title{Universal Structure of Graph Product Kernels}
\author{Ian J.~Leary}
\address{School of Mathematics, University of Southampton, Southampton SO17~1BJ, UK}
\email{i.j.leary@soton.ac.uk}

\author{Nansen Petrosyan}
\address{School of Mathematics, University of Southampton, Southampton SO17~1BJ, UK}
\email{n.petrosyan@soton.ac.uk}

\thanks{}

\subjclass{}

\dedicatory{Dedicated to Mike Davis on the occasion of his 75th birthday.}

\date{\today}

\begin{document}

\begin{abstract}
Let $\GG$ be a graph product over a finite simplicial graph $\G$, and let $\KG$ denote the kernel of the canonical homomorphism from $\GG$ to the direct product of its vertex groups.  
It is known that, up to isomorphism, $\KG$ depends only on the underlying graph $\G$ and the cardinalities of the vertex groups.

In this paper we establish a functorial refinement of this fact.   We show that any collection of set maps between the vertex groups naturally induces a homomorphism between the corresponding kernels, and that this construction is functorial. 
Several applications are discussed.
\end{abstract}

\maketitle

\section{Introduction}

Graph products interpolate between free and direct products of groups and have played a central role in geometric group theory. 
Given a simplicial graph $\GG$ with vertex set $V(\G)$ and a collection of vertex groups $\{G_i\}_{i \in V(\G)}$, the \emph{graph product} $\GG$ is obtained by taking the free product of the $G_i$'s and adding commutation relations corresponding to the edges of $\G$. Its structure reflects both the algebra of the vertex groups and the combinatorics of $\G$. 

A canonical epimorphism
\[
\GG \longrightarrow \prod_{i \in V(\G)} G_i
\]
is obtained by mapping each vertex group $G_i$ identically onto its factor and extending multiplicatively.  
The kernel of this map, which we denote by $K_{\G}$, plays a surprisingly geometric role.  
It occurs naturally as the fundamental group of a polyhedral product of classifying spaces, as shown by Panov and Veryovkin~\cite{PanovVeryovkin2016}, and thereby links the algebraic structure of graph products with the topology of moment-angle complexes and related spaces in toric topology.  
In particular, for right-angled Artin and Coxeter groups, these kernels coincide with the fundamental groups of classical moment-angle and real moment-angle complexes.

It was observed by Kim that the isomorphism type of the kernel $\KG$ depends only on the underlying simplicial graph $\G$ and the cardinalities of the vertex groups; see Corollary~15 of~\cite{Kim2012}. 

Our main results strengthen this classification by showing that it arises as a consequence of a much more rigid functorial structure.    
In particular, any collection of set maps between vertex groups induces, in a natural and functorial way, homomorphisms between the corresponding kernels, and this functoriality persists even when passing to more general kernels associated to graph extensions.  

Our approach is fundamentally different from the methods used in~\cite{Kim2012}.  
Rather than relying on the combinatorial and group-theoretic techniques, we work with polyhedral products of classifying spaces, whose fundamental groups realise the kernels under consideration.  
This topological approach is particularly well suited to functorial constructions.

\begin{theorem}\label{thm:kernel_functoriality}
Let $\G$ be a finite simplicial graph, and let $\{G_i\}_{i \in V(\G)}$ and $\{G_i'\}_{i \in V(\G)}$ be two families of groups. 
Let $G_{\G}$ and $G_{\G}'$ be the corresponding graph products, and denote
\[
K_{\G} := \ker\Big(G_{\G} \longrightarrow \prod_{i \in V(\G)} G_i\Big), 
\quad 
K_{\G}' := \ker\Big(G_{\G}' \longrightarrow \prod_{i \in V(\G)} G_i'\Big).
\] 
\noindent Suppose for each $i \in V(\G)$ there is a set map
$f_i : G_i \longrightarrow G_i'.$
Then these maps induce a  homomorphism $\Phi_\Gamma: K_{\G} \longrightarrow K_{\G}',$
which defines a functor from the category of sequences of sets $\{G_i\}_{i \in V(\G)}$ (with set maps as morphisms) to the category of groups, sending $\{G_i\}$ to $K_{\G}$ and $\{f_i\}$ to the induced map $\Phi_\Gamma$.  
\end{theorem}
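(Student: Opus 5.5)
We will realise $K_{\G}$ as the fundamental group of a polyhedral product and use the fact that the relevant spaces depend functorially on the vertex groups regarded only as sets.

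\textbf{The polyhedral product model.} For a group $G$, let $EG$ be the simplicial model whose $n$-simplices are $(n+1)$-tuples of elements of $G$. This is the full simplex on the vertex set $G$, so it is contractible and carries a free $G$-action. Put $BG = EG/G$. Let $L$ be the flag complex of $\G$. Consider the polyhedral product
\[
Z_L(\underline{EG},\underline{G}) = \bigcup_{\sigma\in L}\ \prod_{i\in\sigma} EG_i \times \prod_{i\notin\sigma} G_i \ \subseteq\ \prod_{i} EG_i ,
\]
where each $G_i$ is viewed as the discrete vertex set of $EG_i$. The group $\prod_i G_i$ acts freely on this space, and the quotient is $Z_L(\underline{BG},\underline{*})$. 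By Panov--Veryovkin, the latter has fundamental group $\GG$. Hence $Z_L(\underline{EG},\underline{G})$ is a covering space corresponding to the kernel of $\GG\to\prod G_i$, so it is connected with $\pi_1 = \KG$.

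To see this without quoting too much, one can argue directly. Since $\pi_1 Z_L(\underline{BG},\underline{*})=\GG$ (the $2$-skeleton is the presentation complex) and $\pi_1(\prod BG_i)=\prod G_i$, the inclusion $Z_L(\underline{BG},\underline{*})\hookrightarrow \prod BG_i$ induces the canonical epimorphism on $\pi_1$. The pullback of the universal cover $\prod EG_i$ along this inclusion is exactly $Z_L(\underline{EG},\underline{G})$. It is connected because the map on $\pi_1$ is surjective, and its fundamental group is the kernel $\KG$.

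\textbf{Functoriality.} A set map $f\colon G\to G'$ induces a simplicial map $Ef\colon EG\to EG'$, given by $(g_0,\dots,g_n)\mapsto(f(g_0),\dots,f(g_n))$. This map sends the vertex set $G$ into $G'$, and it is strictly functorial: $E(f'\circ f)=Ef'\circ Ef$ and $E(\mathrm{id})=\mathrm{id}$. Consequently $\prod_i Ef_i$ restricts to a map of pairs on each piece of the polyhedral product, giving a map
\[
Z(f)\colon Z_L(\underline{EG},\underline{G})\to Z_L(\underline{EG'},\underline{G'}),
\]
and this assignment is functorial. To get a homomorphism on $\pi_1$ we need basepoints. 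Take the basepoint $(1,\dots,1)$ in the source and $(f_i(1))_i$ in the target. The target space is connected, so we can choose a canonical path from $(1)$ to $(f_i(1))$: for instance, since $\G$ is finite and each vertex group changes one coordinate, move one coordinate at a time along the edge $[f_i(1),1]$ of $EG_i'$. Such an edge lies in the polyhedral product, because changing a single coordinate uses only the simplex $\{i\}$ of $L$.

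\textbf{Main obstacle.} The main difficulty is making the basepoint correction compatible with composition. The cleanest fix is to avoid choices of paths altogether. Use the \emph{canonical contractible space of paths}: the straight-line path in $\prod_i EG'_i$ from $(f_i(1))$ to $(1)$ is canonical, but it must be replaced by one inside $Z_L$. Instead, change coordinates in a fixed order of $V(\G)$, with each step an edge of a single $EG'_i$. We then check that for a composite $f'\circ f$ the concatenated correction path is homotopic rel endpoints to the direct one. This holds because any two such coordinatewise edge-paths between the same points lie in a subcomplex $\prod_i \Delta(\{\text{three points}\})$ restricted to the $1$-skeleton. The resulting loops are trivial in $\pi_1 Z_L$ by the ``square'' $2$-cells, $i$-edge times $j$-edge, when $i\sim j$. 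When $i\not\sim j$ we need extra care.

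Alternatively, and more robustly, one can restrict to pointed set maps, i.e.\ first replace each $f_i$ by the map $g\mapsto f_i(g)\,f_i(1)^{-1}$. This yields a functorial construction on the nose, since $(f'f)$ then normalises compatibly: with $\hat f(g)=f(g)f(1)^{-1}$ one checks that $\widehat{f'f}$ agrees with $\hat f'\hat f$ up to the basepoint translation. If needed, we will compose with the deck translation by $(f_i(1))\in\prod G'_i$, which acts on $Z_L(\underline{EG'},\underline{G'})$ and moves the basepoint canonically. Concretely, we define $\Phi_\G$ as $\pi_1$ of $(f_i(1)^{-1})\cdot Z(f)$, which is a pointed map. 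Functoriality then reduces to the identity $Ef'(g\cdot x)$ versus $f'$-translates. Verifying this identity, or showing that the discrepancy acts trivially on $\pi_1$ up to canonical conjugation, is the step we expect to require the most care.
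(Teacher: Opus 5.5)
The gap is exactly where you placed it: the proposal never proves $\Phi_{f'\circ f}=\Phi_{f'}\circ\Phi_f$. None of your three repairs works, and for arbitrary unpointed set maps the step cannot be completed at all.

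\textbf{What is sound.} Your space-level construction is correct, and in one respect cleaner than the paper's. With the tuple (simplicial-set) model of $EG$, a set map induces $Ef$ on the nose, so $Z(f'\circ f)=Z(f')\circ Z(f)$ holds strictly. The paper instead extends each $f_i$ by obstruction theory and needs Lemma~\ref{lem:homotopy_relative} to get uniqueness and functoriality only up to homotopy rel vertices. Keep to the tuple model, though: the simplicial complex on all finite subsets of $G$, which you also call ``the full simplex'', is not $G$-free when $G$ has torsion.

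\textbf{Why repairs (a) and (b) fail.} The coordinatewise correction paths of (a) fail in the non-adjacent case. Let $\G$ be two non-adjacent vertices with $G_1=G_2=\mathbb Z/3$, and take $d\in G_1$, $c\in G_2$ nontrivial. The translations by $(d,e)$ and $(e,c)$ commute, so functoriality would require their induced automorphisms to commute. Your convention sends them to conjugation by $d$ and by $c$. These do not commute on $K_\G$, which is free of rank $4$, because $d^{-1}c^{-1}dc\in K_\G$ is nontrivial and hence not central. The claim in (b) that $\widehat{f'f}$ and $\hat f'\hat f$ agree up to translation is also false. In the example below, $\hat f=\mathrm{id}$, while $\widehat{f'f}$ and $\hat f'=f'$ are two different pointed maps.

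\textbf{Why (c) fails.} Convention (c) composes $Z(f)$ with the deck transformation $\mathbf f(\mathbf e)^{-1}$. Equivalently, it identifies $\pi_1(Z',\mathbf f(\mathbf e))$ with $K'_\G$ via the covering projection. This is precisely the $\Phi_\G$ given by the formula of Proposition~\ref{prop:explicit_homomorphism}, and it is not functorial on unpointed maps. Let $\G$ have two non-adjacent vertices, $G_1=\mathbb Z/3$, $b\in G_2\smallsetminus\{e\}$, and $f_2=f_2'=\mathrm{id}$. Set $f_1(x)=x+\bar 1$, and $f_1'(\bar 0)=f_1'(\bar 2)=\bar 0$, $f_1'(\bar 1)=\bar 1$.
\begin{enumerate}[(i)]
\item $Z(f)$ is the deck transformation by $(\bar 1,e)$, so $\Phi_f=\mathrm{id}$.
\item $Z(f')$ maps the loop $w=\bar 1\,b\,\bar 2\,b^{-1}$, which passes through $(\bar 0,e),(\bar 1,e),(\bar 1,b),(\bar 0,b)$, to itself. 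Hence $\Phi_{f'}\Phi_f(w)=w$.
\item $Z(f'\circ f)$ maps $w$ to the loop through $(\bar 1,e),(\bar 0,e),(\bar 0,b),(\bar 1,b)$. Its label is $\bar 2\,b\,\bar 1\,b^{-1}$, which is not equal to $w$ in $\mathbb Z/3*G_2$.
\end{enumerate}

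\textbf{What your argument does prove.} It proves the statement for pointed set maps, i.e.\ $f_i(e_i)=e_i'$. Then $Z(f)$ fixes the base vertex, and applying $\pi_1$ to $Z(f'\circ f)=Z(f')\circ Z(f)$ finishes the proof. More generally, convention (c) satisfies $\Phi_f=\Phi_{\hat f}$ for $\hat f_i=f_i(e_i)^{-1}f_i$. It also satisfies $\Phi_{f'\circ f}=\Phi_{f'}\circ\Phi_f$ whenever $f$ is pointed. This is all that isomorphism and retraction consequences such as Corollary~\ref{intro_cor_2} require.

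The paper's proof passes to fundamental groups without discussing base points, so it does not close this step either. As stated, for arbitrary set maps, the functoriality assertion is false for the paper's own $\Phi_\G$; it is correct for pointed set maps.
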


We call $\Phi_\Gamma$ the {\it homomorphism induced by the collection $\{f_i\}$}. In Proposition \ref{prop:explicit_homomorphism}, we give a simple and explicit formula for  $\Phi_\Gamma$, expressed directly in terms of syllables of the words in $\KG$.

As an application of Theorem \ref{thm:kernel_functoriality}, we obtain the following more general result.

\begin{theorem}\label{thm:general_kernel_functoriality}
Let $\G$ be a  finite simplicial graph and $\bG$ be a simplicial graph obtained from $\G$ by adding  some edges. Let $\{G_i\}_{i \in V(\G)}$ and $\{G_i'\}_{i \in V(\G)}$ be two families of groups. 
Denote,
\[
K_{\G, \bG} := \ker\Big(G_{\G} \longrightarrow G_{\bG}\Big), 
\quad 
K_{\G, \bG}' := \ker\Big(G_{\G}' \longrightarrow G'_{\bG}\Big).
\] 
\noindent Suppose for each $i \in V(\G)$ there is a set map
$f_i : G_i \longrightarrow G_i'.$
Then the  homomorphism $\Phi_{\G}: K_{\G} \longrightarrow K_{\G}'$ restricts to a homomorphism $\Phi_{\G, \bG}: K_{\G, \bG} \longrightarrow K_{\G, \bG}'$. This 
defines a functor from the category of sequences of sets $\{G_i\}_{i \in V(\G)}$ (with set maps as morphisms) to the category of groups, sending $\{G_i\}$ to $K_{\G, \bG}$ and $\{f_i\}$ to the induced map $\Phi_{\G, \bG}$.  
\end{theorem}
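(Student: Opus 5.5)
We deduce Theorem~\ref{thm:general_kernel_functoriality} from Theorem~\ref{thm:kernel_functoriality}, together with the explicit syllable formula for $\Phi_\G$ in Proposition~\ref{prop:explicit_homomorphism}. Since $\bG$ has the same vertex set as $\G$ and only more edges, the canonical map $G_\G\to\prod_i G_i$ factors as
\[
G_\G \longrightarrow G_{\bG} \longrightarrow \prod_{i\in V(\G)} G_i .
\]
Hence $K_{\G,\bG}\subseteq K_\G$, and likewise $K'_{\G,\bG}\subseteq K'_\G$. Moreover, $K_{\G,\bG}$ is the normal closure in $G_\G$ of the commutators $[g_i,g_j]$ with $g_i\in G_i$, $g_j\in G_j$ and $\{i,j\}$ an edge of $\bG$ but not of $\G$. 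Once we show that $\Phi_\G(K_{\G,\bG})\subseteq K'_{\G,\bG}$, functoriality is immediate. Indeed, $\Phi_{\G,\bG}$ is a restriction of $\Phi_\G$, and restrictions of compositions and identities are compositions and identities, because $\{f_i\}\mapsto\Phi_\G$ is already a functor.

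For the containment, compose $\Phi_\G$ with the quotient $q'\colon G'_\G\to G'_{\bG}$. We must show that $q'\circ\Phi_\G$ vanishes on $K_{\G,\bG}$. The cleanest route is to show that $q'\circ\Phi_\G$ factors through $q\colon K_\G\to q(K_\G)=K_{\bG}$ via $\Phi_{\bG}$, that is, to show the square
\[
q'\circ\Phi_\G=\Phi_{\bG}\circ q \quad\text{on } K_\G .
\]
Given this identity, any $x\in K_{\G,\bG}$ satisfies $q'(\Phi_\G(x))=\Phi_{\bG}(q(x))=\Phi_{\bG}(1)=1$, as required.

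The square is a naturality statement in the graph. I would try to prove it topologically, in the spirit of the construction of $\Phi_\G$. The kernel $K_\G$ is $\pi_1$ of the polyhedral product $(EG,G)^{L}$, where $L$ is the flag complex of $\G$. The inclusion $L\subseteq \bar L$ of flag complexes induces an inclusion of polyhedral products, and on $\pi_1$ this inclusion realises $q$. The map $\Phi_\G$ is induced by maps of pairs $(EG_i,G_i)\to(EG'_i,G'_i)$ extending the set maps $f_i$ (for example, taking $E$ to be the join model), and polyhedral products are natural both in the pairs and in the simplicial complex. The square of spaces therefore commutes on the nose, and applying $\pi_1$ gives the identity above.

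If the construction in the paper is instead purely algebraic, I would verify the square using Proposition~\ref{prop:explicit_homomorphism}. The argument would run as follows.
\begin{enumerate}[(i)]
\item Write $x\in K_\G$ as a word in syllables.
\item Observe that applying $q$ leaves the word unchanged and merely reinterprets it in $G_{\bG}$.
\item Check that the syllable formula for $\Phi_{\bG}$ applied to the same word agrees with $q'$ applied to $\Phi_\G$ of the word.
\end{enumerate}
This holds because the formula does not depend on which edges are present.

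The main obstacle is precisely this well-definedness and naturality in the graph. One must ensure that the formula or model used to define $\Phi_\G$ involves no choices depending on $\G$, such as normal forms or reduced words that change once new edges are added. In the topological approach, this amounts to checking that the construction of $\Phi_\G$ is natural with respect to inclusions of simplicial complexes. That naturality should follow directly, since polyhedral products are functorial in the complex.
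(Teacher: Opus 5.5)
Your proposal is correct and follows essentially the same route as the paper: the paper also deduces the result from Theorem~\ref{thm:kernel_functoriality} by using naturality of the construction in the graph to get a commutative ladder over the short exact sequence $K_{\G,\bG}\rightarrowtail K_\G\twoheadrightarrow K_{\bG}$, with $\Phi_{\G,\bG}$ the restriction of $\Phi_\G$ and functoriality inherited from $\Phi_\G$. Your point that using the same maps of pairs for both flag complexes $L\subseteq \bar L$ (the flag complexes of $\G$ and $\bG$) makes the square of polyhedral products commute on the nose is exactly the justification behind the paper's parenthetical remark that the construction is functorial in $\G$.
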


Note that when $\bG$ is the complete graph on $V(\G)$,i.e., a single simplex with vertex set $V(\G)$, then $K_{\G, \bG}=K_{\G}$. So, Theorem \ref{thm:general_kernel_functoriality} is indeed a generalization of Theorem \ref{thm:kernel_functoriality}. As a further generalization, one may allow a simplicial map of graphs
$\psi\colon \G_1 \to \G_2$ together with an extension
$\bar\psi\colon \bG_1 \to \bG_2$, families of groups
$\{G_i\}_{i \in V(\G_1)}$ and $\{G_i'\}_{i \in V(\G_2)}$, and set maps
$f_i \colon G_i \longrightarrow G'_{\psi(i)}$.
This more general situation is treated in
Theorem \ref{thm:more_general_kernel_functoriality}. 

As an immediate application of Theorem \ref{thm:kernel_functoriality}, we obtain the following result of \cite{Kim2012}.

\begin{cor}[{\cite[Corollary 15(2)]{Kim2012}}]\label{intro_cor_2}
Let $\G$ be a finite simplicial graph and $\bG$ be a simplicial graph obtained from $\G$ by adding  some edges. Let $\{G_i\}_{i \in V(\G)}$ and $\{G_i'\}_{i \in V(\G)}$ be two families of groups such that, for each $i$, $|G_i|=|G_i'|$. Then, the corresponding graph product kernels are naturally isomorphic, $K_{\G, \bG}\cong K'_{\G, \bG}$. 
\end{cor}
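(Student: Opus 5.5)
Since $|G_i|=|G_i'|$ for each $i\in V(\G)$, we can choose bijections $f_i\colon G_i\to G_i'$. The plan is to apply Theorem~\ref{thm:general_kernel_functoriality} to the families $\{f_i\}$ and $\{f_i^{-1}\}$. These are set maps only; they need not respect the group structures, and the theorem requires nothing more.

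Theorem~\ref{thm:general_kernel_functoriality} gives homomorphisms
\[
\Phi_{\G,\bG}\colon K_{\G,\bG}\longrightarrow K'_{\G,\bG}, \qquad \Psi_{\G,\bG}\colon K'_{\G,\bG}\longrightarrow K_{\G,\bG},
\]
induced by $\{f_i\}$ and $\{f_i^{-1}\}$ respectively. The assignment $\{G_i\}\mapsto K_{\G,\bG}$, $\{f_i\}\mapsto \Phi_{\G,\bG}$ is a functor on the category of sequences of sets, so it preserves composition and identities. Hence
\[
\Psi_{\G,\bG}\circ\Phi_{\G,\bG}=\text{(map induced by }\{f_i^{-1}\circ f_i\})=\text{(map induced by }\{\mathrm{id}_{G_i}\})=\mathrm{id}_{K_{\G,\bG}},
\]
and symmetrically $\Phi_{\G,\bG}\circ\Psi_{\G,\bG}=\mathrm{id}_{K'_{\G,\bG}}$. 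So $\Phi_{\G,\bG}$ is an isomorphism.

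One subtlety is that the source and target categories are categories of sets, while $K_{\G,\bG}$ depends on the group structures. The functor must therefore be read on objects as sequences of groups, with morphisms the set maps between their underlying sets. With that reading, the identity set map from a group to itself induces the identity homomorphism, which is what the functoriality statement asserts.

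Naturality is also immediate from functoriality. The isomorphism is natural with respect to the chosen bijections: for further set maps $g_i\colon G_i'\to G_i''$, the induced maps compose correctly. There is no real obstacle here; the corollary is just the statement that a functor sends isomorphisms to isomorphisms, applied in the category of sets with fixed graph data $\G\subseteq\bG$.
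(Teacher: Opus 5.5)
Your argument is the one the paper intends: it presents this corollary as an immediate consequence of its functoriality theorem. However, the key step $\Psi_{\G,\bG}\circ\Phi_{\G,\bG}=\Phi_{\{f_i^{-1}\circ f_i\}}$ fails for arbitrary bijections. The composition law asserted in Theorems~\ref{thm:kernel_functoriality} and~\ref{thm:general_kernel_functoriality} holds for identity-preserving set maps, but not for arbitrary ones, and the paper's proof does not notice this. By the recursive description of $\Phi$ following Proposition~\ref{prop:explicit_homomorphism}, the prefixes of $\Phi_{\{f_i\}}(w)$ have images $\mathbf f(\mathbf e)^{-1}\mathbf f(\mathbf p)$ in $\prod_iG_i'$, where $\mathbf p$ runs over the images of the prefixes of $w$. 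In other words, the image path is read from $\mathbf f(\mathbf e)$ and translated back to the identity vertex, and a later non-equivariant $\mathbf g$ does not commute with that translation. Consequently $\Phi_{\{g_i\}}\circ\Phi_{\{f_i\}}=\Phi_{\{h_i\}}$ with $h_i(x)=g_i\bigl(f_i(e_i)^{-1}f_i(x)\bigr)$, and this need not equal $\Phi_{\{g_i\circ f_i\}}$.

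Here is a concrete counterexample. Let $\G$ be two non-adjacent vertices and $\bG$ the edge, so $K_{\G,\bG}=K_\G$. Take $G_1=G_1'=\langle t\mid t^4\rangle$ and $G_2=G_2'=\langle s\mid s^2\rangle$. Let $f_1$ be the transposition of $e$ and $t$, so $f_1^{-1}=f_1$, and let $f_2=\mathrm{id}$. Proposition~\ref{prop:explicit_homomorphism} gives $\Phi(t^3sts)=t^2st^2s$ and $\Phi(t^2st^2s)=tst^3s$. Hence $\Psi\circ\Phi(t^3sts)=tst^3s\neq t^3sts$ in $\Z/4*\Z/2$, even though $f^{-1}\circ f=\mathrm{id}$. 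The same problem undermines your naturality remark. The ``sets versus groups'' subtlety you raise is harmless; this basepoint issue is the real one.

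The repair is easy. Since $|G_i|=|G_i'|$, also $|G_i\smallsetminus\{e_i\}|=|G_i'\smallsetminus\{e_i'\}|$, so you may choose bijections with $f_i(e_i)=e_i'$. Then each $f_i^{-1}$ also preserves identities. The composition rule above gives $\Psi\circ\Phi=\Phi_{\{\mathrm{id}\}}=\mathrm{id}$ on $K_\G$ and $\Phi\circ\Psi=\mathrm{id}$ on $K'_\G$. Restricting to $K_{\G,\bG}$ and $K'_{\G,\bG}$ as you do then completes the proof.

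Alternatively, you can keep arbitrary $f_i$ and note that $\Phi_{\{f_i\}}$ depends only on the quotients $f_i(x)^{-1}f_i(y)$. So it equals $\Phi_{\{\hat f_i\}}$ for the identity-preserving bijections $\hat f_i(x)=f_i(e_i)^{-1}f_i(x)$. Its inverse is then $\Phi_{\{\hat f_i^{-1}\}}$, not $\Phi_{\{f_i^{-1}\}}$.
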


 In fact, under the normal length metric on $\GG$, the above isomorphism  is an isometry (see Corollary \ref{cor:isometric_embedding}).

\begin{cor}\label{intro_cor}
Let $\G$ be a finite simplicial graph and let $\GG$ be the graph product of groups $\{G_i\}_{i \in V(\G)}$. 
\begin{enumerate}[(i)]
    \item If each $G_i$ is infinite countable, then $\KG$ is isomorphic to the commutator subgroup of the right-angled Artin group $A_{\G}$ on $\G$.
    
    \item If each $G_i$ is countable, then $\KG$ is a retract of the commutator subgroup of the right-angled Artin group $A_{\G}$ on $\G$. 
    
    \item If each $G_i$ is nontrivial, then the commutator subgroup of the right-angled Coxeter group $W_{\G}$ on $\G$ is a retract of $\KG$.
\end{enumerate}
\end{cor}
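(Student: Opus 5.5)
We prove the three parts of Corollary~\ref{intro_cor} by applying Theorem~\ref{thm:kernel_functoriality} to suitable set maps between vertex groups and comparing with the standard vertex groups $\Z$ and $\Z/2$. Two standard facts link these kernels to commutator subgroups. For the right-angled Artin group $A_\G$ (all vertex groups $\Z$), the map $A_\G\to\Z^{V(\G)}$ is abelianisation, so $K_\G=[A_\G,A_\G]$. Similarly, for the right-angled Coxeter group $W_\G$ (all vertex groups $\Z/2$), the map $W_\G\to(\Z/2)^{V(\G)}$ is abelianisation, since the abelianisation of $W_\G$ is the elementary abelian $2$-group on $V(\G)$. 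So $K_\G=[W_\G,W_\G]$.

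For (i), each $G_i$ is countably infinite, so we choose bijections $f_i\colon G_i\to\Z$ with inverses $g_i$. By functoriality in Theorem~\ref{thm:kernel_functoriality}, $\Phi(g)\circ\Phi(f)=\Phi(g\circ f)=\Phi(\mathrm{id})=\mathrm{id}$, and symmetrically. Hence $\KG\cong K_\G(A_\G)=[A_\G,A_\G]$. This is also Corollary~\ref{intro_cor_2} with $\bG$ complete.

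For (ii), each $G_i$ is countable and nonempty (it is a group), so there are set maps $s_i\colon G_i\to\Z$ injective and $r_i\colon\Z\to G_i$ with $r_i\circ s_i=\mathrm{id}_{G_i}$: take any injection $s_i$, invert it on its image, and send everything else to the identity. Functoriality gives homomorphisms $\Phi(s)\colon\KG\to[A_\G,A_\G]$ and $\Phi(r)\colon[A_\G,A_\G]\to\KG$ with $\Phi(r)\Phi(s)=\mathrm{id}_{\KG}$. So $\KG$ is a retract, embedded via $\Phi(s)$.

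For (iii), each $G_i$ is nontrivial, so choose an injection $s_i\colon\Z/2\to G_i$ (for instance $0\mapsto 1$ and $1\mapsto x_i\neq1$) and a retraction $r_i\colon G_i\to\Z/2$ with $r_i s_i=\mathrm{id}$ (send $x_i\mapsto1$ and all else to $0$). The same functoriality argument exhibits $[W_\G,W_\G]=K_\G(W_\G)$ as a retract of $\KG$.

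The only point needing care is that Theorem~\ref{thm:kernel_functoriality} is genuinely functorial: identities go to identities and composites to composites. This holds by the theorem as stated, so no step here is a serious obstacle. The real content lies in Theorem~\ref{thm:kernel_functoriality} itself and in the identification of the kernels with commutator subgroups, which we have checked above.
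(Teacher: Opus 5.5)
Your strategy is the one the paper intends. The paper gives no separate proof of Corollary~\ref{intro_cor}; it treats it as a direct consequence of Theorem~\ref{thm:kernel_functoriality}, applied to bijections or to set-theoretic sections and retractions, together with the identification of the kernels for $A_\G$ and $W_\G$ as the commutator subgroups (compare Example~\ref{ex:W_A}). Your part (iii) is fine as written.

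However, the step you called harmless, $\Phi_r\circ\Phi_s=\Phi_{r\circ s}$, is false for set maps that do not send identity elements to identity elements. So (i) and (ii) as written can fail for some choices of maps.

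By the formula of Proposition~\ref{prop:explicit_homomorphism}, the prefix products of the $G_i'$-syllables of $\Phi_f(w)$ telescope to $f_i(e_i)^{-1}f_i(g_{i1}\cdots g_{ij})$. Hence $\Phi_f=\Phi_{\hat f}$ and $\Phi_h\circ\Phi_f=\Phi_{h\circ\hat f}$, where $\hat f_i(x)=f_i(e_i)^{-1}f_i(x)$. Topologically, $\mathbf f$ moves the base vertex $\mathbf e$ to $\mathbf f(\mathbf e)$ and does not commute with the deck transformations.

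Here is a concrete counterexample. Let $\G$ have two non-adjacent vertices with $G_a=G_b=\Z$, and let $f$ be the bijection of $\Z$ swapping $0$ and $1$. Then $\Phi_f(aba^{-1}b^{-1})=a^{-1}b^{-1}ab$, and $\Phi_f\Phi_f(aba^{-1}b^{-1})=a^{-2}b^{-2}a^{2}b^{2}\neq aba^{-1}b^{-1}$, even though $f\circ f=\mathrm{id}$. So with $g=f^{-1}=f$, your identity $\Phi_g\circ\Phi_f=\mathrm{id}$ in (i) fails. This same $f$ is also an admissible injection $s_i$ in your recipe for (ii), with $r_i=f$, and there $\Phi_r\Phi_s\neq\mathrm{id}$.

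The repair is easy, because composition is respected whenever the first map preserves identities.
\begin{itemize}
\item In (i), choose bijections with $f_i(e_i)=0$; then $f_i^{-1}(0)=e_i$ as well.
\item In (ii), choose the injection with $s_i(e_i)=0$; then $\Phi_r\Phi_s=\Phi_{r\circ s}=\mathrm{id}$.
\item Equivalently, replace every $f$ by $\hat f$, which does not change $\Phi_f$.
\end{itemize}

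The statement of Theorem~\ref{thm:kernel_functoriality} glosses over this point, so you relied on it as stated. But the construction is genuinely functorial only on identity-preserving set maps, i.e.\ on pointed sets, which is what the paper's own Example~\ref{ex:W_A} uses.
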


Corollary \ref{intro_cor} (i) and (ii) is a strengthening of Corollary 14 of \cite{Kim2012}. Part (iii) has also been established in Lemma 2.7 of \cite{Li_2023}.

Corollary \ref{intro_cor} (ii) is useful in investigating the hereditary properties of graph products.
For instance, it is well-known that finitely generated right-angled Artin groups are poly-free (e.g. \cite{HerSun07}). One may extend this result as follows:

\begin{cor}\label{intro_cor:poly-free}
    Let $\G$ be a finite simplicial graph and let $\GG$ be the graph product of countable groups $\{G_i\}_{i \in V(\G)}$. Then $\KG$ is poly-free. In addition, if each vertex group is poly-free, then $\GG$ is poly-free.
\end{cor}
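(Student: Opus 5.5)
Both claims follow from Corollary~\ref{intro_cor}(ii) together with closure properties of the class of poly-free groups. For the first claim, Corollary~\ref{intro_cor}(ii) says that $\KG$ is a retract of the commutator subgroup $[A_\G,A_\G]$ of the right-angled Artin group $A_\G$. Since $\G$ is finite, $A_\G$ is finitely generated, hence poly-free by \cite{HerSun07}. Poly-freeness passes to subgroups: intersect each term of a poly-free series of $A_\G$ with the subgroup, and each successive quotient embeds in a free group, so it is free by Nielsen--Schreier. Hence $[A_\G,A_\G]$ is poly-free, and so is its subgroup $\KG$, since a retract is in particular a subgroup. Only the subgroup property of the retract is needed here.

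For the second claim, I would use the short exact sequence
\[
1 \longrightarrow \KG \longrightarrow \GG \longrightarrow \prod_{i\in V(\G)} G_i \longrightarrow 1 .
\]
The key point is that poly-free groups are closed under extensions. If $1\to N\to E\to Q\to 1$ is exact with $N$ and $Q$ poly-free, pull back a poly-free series of $Q$ to $E$ and splice it after a poly-free series of $N$. The result is a subnormal series of $E$ whose successive quotients are free. The same splicing argument shows that a finite direct product of poly-free groups is poly-free: in $\prod_{i=1}^n G_i$, take the series $1\le G_1\le G_1\times G_2\le\cdots$, which is normal with poly-free successive quotients, and refine it. So $\prod_i G_i$ is poly-free when every $G_i$ is, since $V(\G)$ is finite. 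With $\KG$ poly-free from the first part, extension closure gives that $\GG$ is poly-free.

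I do not expect a real obstacle. The one point needing care is the definition of poly-free. It should allow free groups of arbitrary (countable) rank, because $\KG$ and $[A_\G,A_\G]$ need not be finitely generated. It should also allow subnormal rather than normal series, since the spliced series is only subnormal. This agrees with the convention in \cite{HerSun07}, and the countability hypothesis is used only to invoke Corollary~\ref{intro_cor}(ii).
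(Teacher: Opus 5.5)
Your proof is correct. For the first claim it does not follow the paper's main argument. It is instead exactly the alternative the paper records in the Remark after Corollary~\ref{cor:poly-free_kernel}: poly-freeness of $A_\G$ (Howie, Hermiller--\v{S}uni\'c) passes to subgroups, hence to $[A_\G,A_\G]$ and to $\KG$.

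The paper's main proof also reduces to $G_i=\Z$ via Corollary~\ref{intro_cor}, but then inducts on the number of non-edges of $\G$. Adding one edge gives the exact sequence $K_{\G,\bG}\rightarrowtail\KG\twoheadrightarrow K_{\bG}$ from the proof of Theorem~\ref{thm:general_kernel_functoriality}. Here $K_{\G,\bG}$ is free because it lies in the kernel $N$ of $A_\G\to A_{\G'}$, where $v$ is an endpoint of the added edge and $\G'$ is the full subgraph on $V(\G)\smallsetminus v$; this $N$ is free by Howie.

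That route needs only the freeness of $N$, not the full poly-free structure of $A_\G$. Unwinding it along a chain $\G=\G_0\subset\G_1\subset\cdots\subset\G_r$ ending at the complete graph gives an explicit series $1=K_{\G,\G_0}\le K_{\G,\G_1}\le\cdots\le K_{\G,\G_r}=\KG$. Its terms are normal in $A_\G$, its factors $K_{\G_k,\G_{k+1}}$ are free, and its length is at most the number of non-edges. It also shows the graph-pair kernels of Theorem~\ref{thm:general_kernel_functoriality} in action.

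Your route is shorter. It also states and proves the subgroup- and extension-closure properties that the paper uses only tacitly, both in its reduction to $G_i=\Z$ and in deducing Corollary~\ref{cor:poly-free_graph_prodcut}. Your argument for the second claim, via $\KG\rightarrowtail\GG\twoheadrightarrow\prod_i G_i$ and extension closure, is the one the paper intends.
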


Let  $\C$ be a class of groups satisfying the following  properties:

\begin{enumerate}[(P1)]
    \item Every subgroup of any finitely generated right-angled Artin group is in $\C$;
    \item If $B$ is a group that has a normal subgroup $A\in \C$ such that $B/A \in \C$, then $B\in \C$.
    
\end{enumerate}

\begin{cor}\label{intro_general_property} Let $\C$ be a class of groups satisfying properties $(P1)$ and $(P2)$.
Let $\G$ be a finite simplicial graph and let $\GG$ be the graph product of countable groups $\{G_i\}_{i \in V(\G)}$. If each $G_i\in \C$, then $\GG\in \C$.
\end{cor}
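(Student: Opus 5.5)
We use the short exact sequence
\[
1 \longrightarrow \KG \longrightarrow \GG \longrightarrow \prod_{i\in V(\G)} G_i \longrightarrow 1
\]
and the fact that $\C$ is closed under extensions (P2). The quotient $\prod_i G_i$ is a finite direct product of groups in $\C$. Each direct factor is normal, so an induction on $|V(\G)|$ using (P2) shows $\prod_i G_i\in\C$. Precisely, $\prod_{i\le n} G_i$ has the normal subgroup $G_n\in\C$ with quotient $\prod_{i<n}G_i$, which lies in $\C$ by induction. The base case is a single $G_i\in\C$; the empty product is the trivial group, which lies in $\C$ by (P1) because it is a subgroup of any right-angled Artin group.

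It then suffices to show that $\KG\in\C$, after which one more application of (P2) to the sequence above gives $\GG\in\C$. By Corollary~\ref{intro_cor}(ii), since every $G_i$ is countable, $\KG$ is a retract, and in particular isomorphic to a subgroup, of the commutator subgroup $[A_\G,A_\G]$ of the right-angled Artin group $A_\G$. Because $\G$ is finite, $A_\G$ is finitely generated. Thus $\KG$ is isomorphic to a subgroup of a finitely generated right-angled Artin group, and (P1) gives $\KG\in\C$. Here we must assume that $\C$ is closed under isomorphism, which is the standard convention for classes of groups.

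The substantive input is Corollary~\ref{intro_cor}(ii). If one wants to avoid citing it, the plan is to rebuild it from Theorem~\ref{thm:kernel_functoriality}. Choose set maps $f_i\colon G_i\to\Z$ and $g_i\colon \Z\to G_i$ with $g_i\circ f_i=\mathrm{id}$; this is possible because $G_i$ is countable. Functoriality gives homomorphisms $\KG\to K_{A}$ and $K_A\to \KG$ whose composite is the identity. Here $K_A$ is the kernel of $A_\G\to\Z^{V(\G)}$, which is $[A_\G,A_\G]$, so $\KG$ is a retract of it.

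The main obstacle is this functorial retraction step, in particular ensuring that the induced map $\KG\to K_A$ is injective. This is guaranteed by the identity composite, which in turn relies on functoriality. Once that is in hand, the rest is a formal application of (P1) and (P2).
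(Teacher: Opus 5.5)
Your proposal is correct and follows the paper's proof. $\KG$ embeds in $[A_\G,A_\G]$ by Corollary~\ref{intro_cor}(ii), so (P1) puts it in $\C$, and (P2) applied to $1\to\KG\to\GG\to\prod_i G_i\to 1$ gives $\GG\in\C$. Your inductive use of (P2) to show $\prod_i G_i\in\C$ fills in a step the paper leaves implicit.
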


There is an application of Corollary \ref{intro_general_property} to the Baum--Connes conjecture.

\begin{deff}[Baum--Connes Conjecture with Coefficients and Finite Wreath Products, \cite{NP25}]\rm
A countable discrete group $G$ is said to satisfy the \emph{Baum--Connes conjecture with coefficients with finite wreath products} (abbreviated \emph{BCC with finite wreath products}) if, for every finite group $F$, the wreath product $G \wr F$ satisfies the Baum--Connes conjecture with coefficients.
\end{deff}

In particular, any group satisfying this conjecture also satisfies the usual Baum--Connes conjecture with coefficients.  
An advantage of this strengthened version of the conjecture is that it is stable under group extensions \cite[Theorem~4.3(6)]{NP25}.

It is known that the Baum--Connes conjecture with coefficients is closed under graph 
products, since it is stable under both finite direct products and amalgamated products 
along subgroups \cite{OO2001Extensions, OO2001Trees}.

\begin{cor}\label{intro_cor:Baum--Connes}
Let $\G$ be a countable simplicial graph  and let $\GG$ be the graph product of countable groups $\{G_i\}_{i \in V(\G)}$.  
If each vertex group $G_i$ satisfies BCC with finite wreath products, then $\GG$ also satisfies BCC with finite wreath products.
\end{cor}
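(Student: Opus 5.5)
We prove Corollary \ref{intro_cor:Baum--Connes} by reducing to finite subgraphs and then applying Corollary \ref{intro_general_property} to the class $\C$ of countable groups satisfying BCC with finite wreath products. There are three steps.

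\emph{Step 1: $\C$ satisfies (P1) and (P2).} Property (P2) is the extension stability of \cite[Theorem~4.3(6)]{NP25}; the countability required for membership in $\C$ is preserved under extensions of countable groups.

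For (P1), let $H$ be a subgroup of a finitely generated right-angled Artin group $A$. Then $H$ is countable and a-T-menable, since $A$ acts properly on a CAT(0) cube complex and this property passes to subgroups. For a finite group $F$, the wreath product $H\wr F = H^{F}\rtimes F$ contains the a-T-menable group $H^F$ with finite index. A-T-menability is inherited by finite-index overgroups (by induction of a proper affine isometric action on a Hilbert space), so $H\wr F$ is a-T-menable. By Higson--Kasparov, it therefore satisfies the Baum--Connes conjecture with coefficients, and so $H\in\C$.

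\emph{Step 2: the finite case.} For a finite graph $\G$ with all $G_i\in\C$, Corollary \ref{intro_general_property} gives $\GG\in\C$ directly. Unwinding that corollary, $\KG$ is a retract of $[A_\G,A_\G]$ by Corollary \ref{intro_cor}(ii), hence a subgroup of $A_\G$, so (P1) applies to it. The quotient $\prod G_i$ lies in $\C$ by iterated (P2). Extension stability then yields $\GG\in\C$.

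\emph{Step 3: passage to countable $\G$.} This is where I expect the real obstacle to lie. Write $\G=\bigcup_n\G_n$ as an increasing union of finite full subgraphs. Then $\GG$ is the directed union of the subgroups $G_{\G_n}$; each $G_{\G_n}$ embeds, being a retract of $\GG$. For a finite group $F$, the wreath product $\GG\wr F$ is likewise the directed union of the subgroups $G_{\G_n}\wr F$.

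It remains to know that the Baum--Connes conjecture with coefficients passes to countable directed unions of subgroups. This is a standard continuity property: the topological side commutes with such unions by compact support, and the analytic side commutes by continuity of $K$-theory of crossed products, following Chabert--Echterhoff or Oyono-Oyono \cite{OO2001Extensions}. If instead $\G$ is already finite, Step 2 suffices by itself.
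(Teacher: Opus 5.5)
Your proof is correct and follows the same route as the paper. The finite case is Corollary~\ref{intro_general_property} applied to the class $\C$ of groups satisfying BCC with finite wreath products, and the countable case passes to a directed union of graph products over finite full subgraphs. The differences are minor: you verify (P1) by showing directly that $H\wr F$ is a-T-menable for every subgroup $H$ of a finitely generated right-angled Artin group, instead of using subgroup-closure from \cite[Theorem~4.3(2)]{NP25}, and you unwind the directed-union step that the paper quotes as \cite[Theorem~4.3(7)]{NP25}. For that step, \cite{OO2001Trees}, applied to the action of an increasing union on its coset tree, is a more apt reference than \cite{OO2001Extensions}.
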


\subsection*{Acknowledgments.} We would like thank Anthony Genevois for making us aware of his unpublished work related to our main results and also for bringing to our attention the results of Kim, in particular, \cite[Corollary 15]{Kim2012}. We thank Kevin Li for pointing out Lemma 2.7 in \cite{Li_2023}. We also thank Ashot Minasyan for useful discussions.

\section{Preliminaries}\label{sec:prelims}

\subsection{Graph products of groups} Let $\G$ be a simplicial graph. For each vertex $i \in V(\G)$, let $G_i$ be a nontrivial group.  
The \emph{graph product} of the family $\{G_i\}_{i \in V(\G)}$, denoted $G_L$, is the quotient of the free product $\bigast_{i \in V(\G)} G_i$ by the relations
\[
[g_i, g_j] = 1 \quad \text{whenever $\{i,j\}$ is an edge of $\G$.}
\]

Two important special cases:
\begin{itemize}
    \item If each $G_i = \Z$, then $\GG$ is the \emph{right-angled Artin group} $A_{\G}$.
    \item If each $G_i = C_2$, then $\GG$ is the \emph{right-angled Coxeter group} $C_{\G}$.
\end{itemize}

There is a canonical epimorphism
\[
\phi_{\G}: \GG \longrightarrow \prod_{i \in V(\G)} G_i
\]
sending each generator of $G_i$ to its image in the corresponding factor. Denote its kernel by
$\KG = \ker(\phi_{\G})$.

The graph $\G$ determines a unique flag complex $L=L(\G)$, called the {\it clique complex of $\G$}, obtained by filling in a simplex for every complete subgraph of $\G$.  
Since this association is canonical, we will use the notations $\G$ and $L$ interchangeably.  
Accordingly, we shall write $G_L$ and $K_L$ in place of $\GG$ and $\KG$ when convenient, and similarly for the corresponding constructions throughout the paper.

\subsection{Polyhedral products}

Polyhedral products provide a topological model connecting graph products to classifying spaces and their kernels.  
We briefly recall the standard definitions and some basic properties.

\begin{deff}{\rm
Let $K$ be a simplicial complex on the finite vertex set  $[m] = \{1,\dots,m\}$, and let $(\underline X, \underline A):=\{(X_i,A_i)\}_{i=1}^m$ be a sequence of CW-pairs.  
The \emph{polyhedral product} associated to $K$ and $(\underline X, \underline A)$ is the subcomplex
\[
(\underline X,\underline A)^K := \bigcup_{\sigma \in K} \prod_{i=1}^m Y_i^\sigma \subseteq \prod_{i=1}^m X_i,
\]
where
\[
Y_i^\sigma = 
\begin{cases}
X_i, & i \in \sigma, \\
A_i, & i \notin \sigma.
\end{cases}
\]
}
\end{deff}

\begin{rmk}\rm
If all $A_i$ are points, we often write $\underline X^K$ to denote $(\underline X, {\bf \ast})^K$ for simplicity.  
Polyhedral products interpolate between Cartesian products and wedge sums: for the full simplex, $\underline X^K = \prod_i X_i$, and for the discrete complex (only vertices), $\underline X^K = \bigvee_i X_i$.
\end{rmk}

\begin{prop}[Functoriality of polyhedral products]\label{prop:functorial}
Let $K_1$ and $K_2$ be simplicial complexes, and let 
$\{(X_i,A_i)\}_{i \in V(K_1)}$ and $\{(X_j',A_j')\}_{j \in V(K_2)}$ be sequences of CW-pairs.  
Suppose we are given
\begin{itemize}
    \item[-] a simplicial map $\varphi: K_1 \to K_2$, and
    \item[-] a collection of maps of pairs $f_i:(X_i,A_i) \to (X_{\varphi(i)}',A_{\varphi(i)}')$ for each $i \in V(K_1)$.
\end{itemize}

Then there is an induced cellular map
\[
f^\varphi : (\underline{X},\underline{A})^{K_1} \longrightarrow (\underline{X}',\underline{A}')^{K_2}.
\]

Moreover:
\begin{enumerate}[(i)]
    \item If for each $i$, $f_i$ is homotopic to $g_i$ relative to $A_i$, then the induced maps satisfy
    \[
    f^\varphi \simeq g^\varphi \colon (\underline{X},\underline{A})^{K_1} \longrightarrow (\underline{X}',\underline{A}')^{K_2}.
    \]
    \item The map $f^\varphi$ is functorial both in the CW-pairs and in the simplicial complex.
\end{enumerate}
\end{prop}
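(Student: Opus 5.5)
The plan is to build $f^\varphi$ as the restriction of a map between the ambient products $\prod_{i\in V(K_1)} X_i \to \prod_{j\in V(K_2)} X_j'$, and then check that it carries each face of the polyhedral product into a face. The only issue is what to put in the coordinate $j$ when $\varphi^{-1}(j)$ is empty or contains several vertices. To handle this, fix once and for all the following data:
\begin{itemize}
\item for each $j$ in the image of $\varphi$, a chosen preimage $s(j)\in\varphi^{-1}(j)$;
\item for each $j$ not in the image, a basepoint $a_j'\in A_j'$ (a $0$-cell, nonempty as is implicit for polyhedral products).
\end{itemize}
When $\varphi$ is injective on vertices these choices are forced, and the construction below is canonical. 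Define
\[
f^\varphi(x)_j = \begin{cases} f_{s(j)}(x_{s(j)}), & j\in \varphi(V(K_1)),\\ a_j', & \text{otherwise.}\end{cases}
\]
We may take each $f_i$ cellular by cellular approximation rel $A_i$. Then $f^\varphi$ is a product of cellular maps composed with coordinate projections, so it is cellular on the product CW structure.

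The key step is that $f^\varphi$ preserves polyhedral products. Take $x\in\prod_i Y_i^\sigma$ with $\sigma\in K_1$, so $x_i\in A_i$ for $i\notin\sigma$. Since $\varphi$ is simplicial, $\tau=\varphi(\sigma)$ is a simplex of $K_2$. For $j\notin\tau$ there are two cases:
\begin{itemize}
\item If $j\in\varphi(V(K_1))$, then $s(j)\notin\sigma$, because otherwise $j=\varphi(s(j))\in\tau$. Hence $x_{s(j)}\in A_{s(j)}$, and $f_{s(j)}$ maps it into $A_j'$, being a map of pairs.
\item If $j\notin\varphi(V(K_1))$, then the coordinate is $a_j'\in A_j'$.
\end{itemize}
So $f^\varphi(x)\in\prod_j Y_j'^{\,\tau}$, and $f^\varphi$ restricts to the polyhedral products.

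For (i), let $H_i$ be a homotopy $f_i\simeq g_i$ rel $A_i$. Apply the same formula to $H_i$ to get $H^\varphi_t$. At each time $t$, $H_{i,t}$ still sends $A_i$ into $A_{\varphi(i)}'$, since it is constant there. The face argument above therefore applies verbatim to each $H^\varphi_t$, so $H^\varphi$ is a homotopy within the polyhedral products. (Only $H_{i,t}(A_i)\subseteq A'_{\varphi(i)}$ is needed, so homotopy through maps of pairs would already suffice.)

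For (ii), functoriality in the pairs with $\varphi$ fixed is immediate: coordinatewise, $(f'\circ f)^\varphi = f'^\varphi\circ f^\varphi$, and identities go to identities. For a composite $K_1\xrightarrow{\varphi}K_2\xrightarrow{\psi}K_3$ I will check the composition law $(g\circ f)^{\psi\varphi}=g^\psi\circ f^\varphi$ by comparing coordinates:
\begin{itemize}
\item For $k$ in the image of $\psi\varphi$, the $k$-coordinate of $g^\psi(f^\varphi(x))$ is $g_{s_\psi(k)}f_{s_\varphi(s_\psi(k))}(x_{s_\varphi s_\psi(k)})$.
\item For $k$ in the image of $\psi$ but not of $\psi\varphi$, it is $g_{s_\psi(k)}(a'_{s_\psi(k)})$.
\end{itemize}
The main obstacle is making these agree with the direct definition for the composite. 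I will handle it by choosing the section for $\psi\varphi$ to be $s_\varphi\circ s_\psi$, which is a valid section on the image of $\psi\varphi$ exactly when $s_\psi(k)$ lies in the image of $\varphi$. In general I choose the data compatibly: sections $s$ together with basepoints, requiring that each $g_j$ carries the chosen basepoint of $A_j'$ to the chosen basepoint of the target. In the applications the $A$'s are points, so this requirement is automatic.

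If the choices are not compatible, the two maps still coincide up to homotopy provided each $A_j'$ is path-connected. Indeed, the coordinatewise discrepancies then lie in the $A$-factors or come from different choices of preimage, and these can be connected within the face by paths in $A$. So the functoriality statement holds on the nose for compatible choices and up to homotopy in general.
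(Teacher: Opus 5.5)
The paper states this proposition without proof, so your argument has to stand on its own. The construction of $f^\varphi$ from a section $s$ and basepoints $a'_j$, the face check, and part (i) are fine. One small point: assume the $f_i$ are cellular, as the paper does when it applies the proposition, rather than replacing them by cellular approximations.

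The genuine gap is part (ii), functoriality in the simplicial complex, as soon as a map identifies vertices. A functor must fix one section $s_\psi$ per morphism $\psi$. But your composition law needs $s_\psi(k)\in\varphi(V(K_1))$ whenever $k\in\psi\varphi(V(K_1))$, and this for \emph{every} $\varphi$ composed with $\psi$. A concrete failure: take $K_2$ to be two vertices $u,v$ with no edge, $K_3=\{w\}$, $\psi$ constant, all pairs $(S^1,\ast)$, all maps identities, and $\iota_u,\iota_v$ the inclusions of the vertices. Then $\mathrm{id}^{\iota_u}$ and $\mathrm{id}^{\iota_v}$ are the two wedge inclusions $S^1\to S^1\vee S^1$, and $\mathrm{id}^\psi$ collapses the summand not indexed by $s_\psi(w)$. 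So for one of the two vertices, $\mathrm{id}^\psi\circ\mathrm{id}^{\iota}$ is constant while $\mathrm{id}^{\psi\iota}=\mathrm{id}_{S^1}$.

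Your fallback, ``equal up to homotopy when each $A'_j$ is path-connected'', therefore also fails: the discrepancy is in an $X$-coordinate, not an $A$-coordinate. Also, the $A$'s are not points in the main application, which uses $(EG_i,G_i)$ with $A_i=G_i$ discrete; there, however, $\varphi$ is bijective on vertices.

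In general, no other choice of map repairs this. Suppose $\psi$ collapses an edge $\{u,v\}$ to a vertex and all pairs are $(S^2,\ast)$. The vertex inclusions factor through the discrete complex on $\{u,v\}$, whose polyhedral product is $S^2\vee S^2\subseteq S^2\times S^2$. If the vertex inclusions and the inclusion of the discrete complex into the edge induce the evident maps (as in your construction), functoriality forces $\mathrm{id}^\psi\colon S^2\times S^2\to S^2$ to restrict to the fold map on $S^2\vee S^2$. That map does not extend even up to homotopy, because $S^2$ is not an H-space.

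What your argument does prove is (ii) for simplicial maps injective on vertices. There $s$ is the inverse of $\varphi$ on its image, $s_{\psi\varphi}=s_\varphi\circ s_\psi$ holds automatically, and the composition law holds on the nose provided the maps send chosen basepoints to chosen basepoints. No basepoints are needed at all if the maps are bijective on vertices. This covers the uses in Theorems~\ref{thm:kernel_functoriality} and~\ref{thm:general_kernel_functoriality}, where $\Gamma\to\overline{\Gamma}\to\Delta$ are the identity on vertices. It does not cover the non-injective $\psi$ permitted in Theorem~\ref{thm:more_general_kernel_functoriality}.
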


\subsection{Classifying spaces and polyhedral products}

Here we state some standard facts about classifying spaces of groups and their connection to polyhedral products. We refer to Section 3 of \cite{PanovVeryovkin2016} for further details. 

 A {\it $G$-CW-complex} is a CW-complex $X$ equipped with a $G$-action that is compatible with the cell structure, in the sense of L\"uck \cite[Def.~1.25]{luck2002l2}.  This notion is equivalent to that of an admissible $G$-complex in the terminology of Brown \cite[Chap.~IX, §10]{brown1982cohomology}, characterised by the property that any element of $G$ which stabilises a cell $\sigma \subseteq X$ must fix $\sigma$ pointwise.  Throughout, we will assume that all $G$-actions are of this form.

We say that $X$ is a \textit{free $G$-CW complex} if the action $G$ on $X$ is free. 
By $EG$ we mean a contractible free $G$-CW complex, and by $BG$ we mean the quotient $EG/G$ called a {\it classifying space} of $G$. If we do not specify the choice of $EG$, then we simply choose an arbitrary contractible free $G$-CW complex as $EG$. Such a complex always exists and any two choices for $EG$ are $G$-homotopy equivalent \cite{luck05_survey}. Equivalently, $BG$ is a $K(G,1)$-complex, so $\pi_1(BG) = G$ and $\pi_i(BG) = 0$ for $i\ne 1$.

Now let $L$ be a finite complex with vertices $V(L) = \{1, \dots, m\}$, and let $\{G_i\}_{i\in V(L)}$ be a collection of discrete groups.  Denote $B\underline  G=\{(BG_i,\ast )\}_{i=1}^m$ and $(E\underline  G, \underline  G)=\{(EG_i, G_i)\}_{i=1}^m$. Recall,
\[(B\underline G)^L = \bigcup_{\sigma \in L} \prod_{i \in V(L)} Y_i, \quad Y_i = 
\begin{cases} 
BG_i & i \in \sigma,\\
\ast & i \notin \sigma.
\end{cases}
\]
\noindent Similarly,  the polyhedral product  associated to  $(E\underline  G, \underline  G)$ is given by:
\[
(E\underline G,\underline G)^L := \bigcup_{\sigma \in L} \prod_{i \in V(L)} Y_i, \quad Y_i = 
\begin{cases} 
EG_i & i \in \sigma,\\
G_i & i \notin \sigma,
\end{cases}
\]
where each $G_i$ is included in $EG_i$ as the fiber over the basepoint of $BG_i$.
The fundamental group of $(B\underline{G})^L$ is the graph product $G_\Gamma$ for 
$\Gamma$ equal to the 1-skeleton of $L$.  More can be said in the case when $L$ 
is flag.  

\begin{prop}[Panov--Veryovkin~\cite{PanovVeryovkin2016}]\label{prop:aspherical} In the case when $L$ 
is a flag complex, $(B\underline{G})^L$ is aspherical, and so it is a classifying 
space for the group $G_\Gamma$ for $\Gamma$ the 1-skeleton of $L$.  
\end{prop}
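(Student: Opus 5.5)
The strategy is to show by induction on the number of vertices of $L$ that $(B\underline{G})^L$ is aspherical. A parallel route would use the covering $(E\underline G,\underline G)^L \to (B\underline G)^L$; I will mention it at the end but not rely on it. The fact that $\pi_1((B\underline G)^L)=G_\Gamma$ is already recorded above, via van Kampen over the cells of the product structure. So it remains to prove asphericity when $L$ is flag.

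\emph{Base case.} If $L$ is a full simplex on $V(L)$, then $(B\underline G)^L=\prod_i BG_i$. This is aspherical, with fundamental group $\prod_i G_i=G_\Gamma$, since $\Gamma$ is complete.

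\emph{Inductive step.} If $L$ is flag but not a simplex, choose a vertex $v$ not adjacent to every other vertex. Let $\mathrm{st}(v)$ and $\mathrm{lk}(v)$ be its star and link, and let $L\setminus v$ be the full subcomplex on $V(L)\setminus\{v\}$. Flagness gives $L=(L\setminus v)\cup \mathrm{st}(v)$ with $(L\setminus v)\cap \mathrm{st}(v)=\mathrm{lk}(v)$, and all three of these are full subcomplexes, hence again flag. Polyhedral products of full subcomplexes sit inside the big one as subcomplexes, by inserting basepoints in the missing coordinates. The decomposition of $L$ therefore gives a pushout of CW complexes
\[
(B\underline G)^L=(B\underline G)^{L\setminus v}\cup_{(B\underline G)^{\mathrm{lk}(v)}} \big(BG_v\times (B\underline G)^{\mathrm{lk}(v)}\big),
\]
using $(B\underline G)^{\mathrm{st}(v)}=BG_v\times (B\underline G)^{\mathrm{lk}(v)}$.

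The pieces are aspherical:
\begin{itemize}
\item $(B\underline G)^{L\setminus v}$ and $(B\underline G)^{\mathrm{lk}(v)}$ by induction, since they have fewer vertices;
\item the product $BG_v\times (B\underline G)^{\mathrm{lk}(v)}$ because both factors are aspherical.
\end{itemize}

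\emph{Gluing.} I would apply Whitehead's theorem on aspherical graphs of spaces: a union of two aspherical CW complexes along an aspherical subcomplex is aspherical, provided the inclusions of the intersection are $\pi_1$-injective. So the main obstacle is to check that $G_{\mathrm{lk}(v)}\to G_{L\setminus v}$ and $G_{\mathrm{lk}(v)}\to G_{\mathrm{lk}(v)}\times G_v$ are injective.

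\begin{itemize}
\item The second map is obviously injective.
\item For the first, I use that for a full subcomplex $M\subseteq L'$ there is a retraction $G_{L'}\to G_M$ killing the vertex groups outside $M$. This is well defined because the defining relations are preserved. It is split by the inclusion, so the inclusion is injective.
\end{itemize}

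On the topological side this is mirrored by the retraction of polyhedral products that collapses the coordinates outside $M$ to the basepoint, which is a cellular retraction. The pushout is then a graph of aspherical spaces with injective edge maps, so it is aspherical. Van Kampen shows its fundamental group is the amalgam $G_{L\setminus v}\ast_{G_{\mathrm{lk}(v)}}(G_{\mathrm{lk}(v)}\times G_v)$, which is exactly $G_\Gamma$ from the presentation.

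\emph{Alternative route.} One could instead show that the cover $(E\underline G,\underline G)^L$ is contractible, as Panov--Veryovkin do via the homotopy fibration $(E\underline G,\underline G)^L\to (B\underline G)^L\to\prod BG_i$. The pushout induction above is more self-contained, and I would follow it.
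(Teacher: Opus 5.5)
Your proof is correct, but it splits $L$ differently from the paper.

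\emph{The paper's splitting.} The paper picks two non-adjacent vertices $v_1,v_2$, which exist because a flag complex with complete $1$-skeleton is a simplex. It writes $L=L_1\cup L_2$, where $L_i$ is the full subcomplex on $V\setminus\{v_i\}$ and $L_3=L_1\cap L_2$ is the full subcomplex on $V\setminus\{v_1,v_2\}$. This gives $G_\Gamma=G_{\Gamma_1}\ast_{G_{\Gamma_3}}G_{\Gamma_2}$. All three pieces are full subcomplexes with fewer vertices, so the induction hypothesis applies to each of them directly.

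\emph{Your splitting.} You use the star/link splitting at a single vertex,
\[
G_\Gamma=G_{L\setminus v}\ast_{G_{\mathrm{lk}(v)}}(G_v\times G_{\mathrm{lk}(v)}),
\]
and you handle the star piece through the identification $(B\underline{G})^{v\ast K}=BG_v\times(B\underline{G})^K$ rather than by induction.

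\emph{Where flagness enters.} The decomposition $L=(L\setminus v)\cup\mathrm{st}(v)$ with intersection $\mathrm{lk}(v)$ holds in any simplicial complex. Flagness is used in your argument only to make $\mathrm{lk}(v)$ a full subcomplex. That single fact gives both the inductive hypothesis for $\mathrm{lk}(v)$ and the $\pi_1$-injectivity via the retraction. It is exactly what fails for $\partial\Delta^2$, where the link of a vertex is two points that are joined by an edge in $L\setminus v$.

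\emph{What your route buys.} Your restriction to a vertex $v$ not adjacent to every other vertex is unnecessary. If $v$ is adjacent to all other vertices, flagness gives $L=v\ast(L\setminus v)$, and the polyhedral product is already the product $BG_v\times(B\underline{G})^{L\setminus v}$. So any vertex works, and the base case can shrink to the empty complex.

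\emph{The gluing step.} This is the same in both proofs. The paper says that the homotopy pushout of classifying spaces along cofibrations is the union. That is Whitehead's asphericity theorem for a union along a $\pi_1$-injective aspherical subcomplex, which you cite directly.
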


To give a sketch proof of this we require a lemma.  For $W$ a subset of the vertex
set $V$ of $L$, the full subcomplex $L_W$ is the complex whose simplices are the 
simplices of $L$ that are contained in $W$.  

\begin{lem}[{Panov--Veryovkin \cite[Prop.~2.2]{PanovVeryovkin2016}}]\label{lemm:retract} 
For $K$ a full subcomplex of $L$, $(B\underline{G})^K$ is a retract 
of $(B\underline{G})^L$.  
\end{lem}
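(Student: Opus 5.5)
The plan is to build the retraction explicitly from the functoriality of polyhedral products, Proposition~\ref{prop:functorial}. There is an inclusion $\iota\colon (B\underline{G})^K \hookrightarrow (B\underline{G})^L$, and I will construct a map $r\colon (B\underline{G})^L \to (B\underline{G})^K$ with $r\circ\iota = \mathrm{id}$.

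First I fix notation. Since $K=L_W$ is full, the vertex set of $K$ is $W$. Inclusion of vertices gives a simplicial map $K\hookrightarrow L$. Together with the identity maps of pairs $(BG_i,\ast)\to(BG_i,\ast)$ for $i\in W$, Proposition~\ref{prop:functorial} yields the cellular map $\iota$. Concretely, $\iota$ sends a point with coordinates $(x_i)_{i\in W}$ to the point of $\prod_{i\in V}BG_i$ whose coordinates are $x_i$ for $i\in W$ and the basepoint $\ast$ for $i\notin W$.

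Next I define $r$ as the restriction to $(B\underline{G})^L$ of the coordinate projection
\[
p\colon \prod_{i\in V} BG_i \to \prod_{i\in W} BG_i,
\]
which forgets the coordinates indexed by $V\setminus W$. This is not literally a simplicial map $L\to K$, so rather than invoke Proposition~\ref{prop:functorial} I check the image directly. Take a point of $\prod_i Y_i^\sigma$ for some $\sigma\in L$. After projection, its coordinate at $i\in W$ lies in $BG_i$ if $i\in\sigma\cap W$, and is $\ast$ otherwise. So the image lies in $\prod_{i\in W} Y_i^{\sigma\cap W}$.

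The key point is that $\sigma\cap W$ is a simplex of $L$, being a face of $\sigma$, and it is contained in $W$. Hence it is a simplex of $L_W=K$ by fullness, and the image lies in $(B\underline{G})^K$. This is exactly where fullness is used; I expect it to be the only substantive step. The map $r$ is continuous because it is the restriction of a continuous map of products, and it is cellular.

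Finally, $r\circ\iota=\mathrm{id}$ on $(B\underline{G})^K$. Indeed, $\iota$ only inserts basepoints in the coordinates indexed by $V\setminus W$, and $p$ deletes exactly those coordinates. So $(B\underline{G})^K$ is a retract of $(B\underline{G})^L$.

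The same argument works verbatim for arbitrary pairs $(X_i,A_i)$. For general pairs one replaces the projection by the one that keeps the coordinates in $W$, and the image lies in $(\underline X,\underline A)^K$ by the same face argument.
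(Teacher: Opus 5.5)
Your proof is correct and is essentially the paper's argument. The paper applies functoriality to the based retractions $BG_i\to Y_i$ (the identity for $i\in W$, constant for $i\notin W$) and identifies $\underline{Y}^L$ with $(B\underline{G})^K$. Under that identification this is exactly your coordinate projection, and your check that $\sigma\cap W\in K$ is precisely the use of fullness left implicit in the paper's ``natural homeomorphism''. (Your closing remark on general pairs needs points $a_i\in A_i$ for $i\notin W$ to define the inclusion, so it requires $A_i\neq\emptyset$, but this does not affect the lemma.)
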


\begin{proof} Let $K$ be the full subcomplex on $W\subseteq V$, and define a 
new sequence $Y_i$ of based spaces by 
\[Y_i=\begin{cases}
BG_i& \hbox{if $i\in W$,}\cr 
*& \hbox{if $i\notin W$.} \cr \end{cases}\]
There is a natural homeomorphism between $(B\underline{G})^K$ and $\underline{Y}^L$, 
and since each $Y_i$ is a based retract of $BG_i$, $\underline{Y}^L$ is a retract 
of $(B\underline{G})^L$.  
\end{proof}

\begin{proof}[Proof of Proposition~\ref{prop:aspherical}]
  We prove Proposition~\ref{prop:aspherical} by induction on $L$.  In the case 
  when $L$ is the $(m-1)$-simplex, $(B\underline{G})^L=\prod_iBG_i$, so it is 
  aspherical in this case.  If $L$ is a flag complex which is not a simplex, then 
  there exist $v_1,v_2\in V$ that are not joined by an edge.  In this case, for 
  $i=1,2$ let $L_i$ be the full subcomplex of $L$ with vertex set $V-\{v_i\}$ 
  and let $L_3$ be the intersection $L_1\cap L_2$.  Each $L_i$ is a flag complex
  with fewer vertices than $L$, and so by induction each is aspherical.  By 
  Lemma~\ref{lemm:retract} the fundamental group of $(B\underline{G})^{L_3}$ 
  maps injectively to the fundamental group of $(B\underline{G})^{L_i}$ for 
  $i=1,2$.  If we write $\Gamma_i$ for the 1-skeleton of $L_i$, then $G_\Gamma$ 
  is isomorphic to the free product with amalgamation 
  \[G_\Gamma=G_{\Gamma_1}*_{G_{\Gamma_3}}G_{\Gamma_2},\]
  and a classifying space for this group can be obtained as the homotopy 
  pushout of the given maps $(B\underline{G})^{L_3}\rightarrow (B\underline{G})^{L_i}$ 
  for $i=1,2$.  Since these maps are the inclusions of subcomplexes, they are 
  cofibrant, and so the homotopy pushout is homotopy equivalent to the pushout, 
  which is the union, i.e., the space $(B\underline{G})^L$.  Thus this space is 
  a classifying space for $G_\Gamma$ and is aspherical as required.  
\end{proof}

By Proposition~\ref{prop:functorial}, the quotient map associated to the universal covering $EG_i \to BG_i$ for each $i$ induces a map of pairs 
$(EG_i, G_i) \longrightarrow (BG_i, *)$,
which in turn gives a map of polyhedral products
$$(E\underline G,\underline G)^L \longrightarrow (B\underline G)^L.$$
Similarly, the inclusion of the simplicial complex $L$ into the full simplex on $V(L)$ induces a natural map
$$(B\underline G)^L \longrightarrow \prod_{i \in V(L)} BG_i.$$

The key structural result, due to Panov and Veryovkin~\cite{PanovVeryovkin2016}, asserts that these maps form
 a homotopy fibration.

\begin{prop}[{Panov--Veryovkin, \cite[Prop.\,3.1]{PanovVeryovkin2016}}]\label{prop:PV}
The sequence of canonical maps
\[
(E\underline G,\underline G)^L \longrightarrow (B\underline G)^L \longrightarrow \prod_{i \in V(L)} BG_i
\]
is a homotopy fibration. In particular, the induced map on fundamental groups
\[
\pi_1((E\underline G,\underline G)^L) \longrightarrow \pi_1((B\underline G)^L) = G_L
\]
is injective, and its image is exactly the kernel $K_L$.  In the case when $L$ is 
flag, it follows that $(E\underline{G},\underline{G})^L$ is a classifying space 
for $K_L$.  
\end{prop}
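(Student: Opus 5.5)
The approach is to realise $(E\underline G,\underline G)^L \to (B\underline G)^L$ as the restriction of the product covering $\prod_i EG_i \to \prod_i BG_i$, which is a principal $\prod_i G_i$-bundle and in particular a fibration with contractible total space. So the first thing I will check is that the preimage of the subcomplex $(B\underline G)^L \subseteq \prod_i BG_i$ under the product projection $p\colon \prod_i EG_i \to \prod_i BG_i$ is exactly $(E\underline G,\underline G)^L$.

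This preimage check is a direct computation. For a simplex $\sigma$ of $L$, the preimage of $\prod_i Y_i$, where $Y_i = BG_i$ for $i\in\sigma$ and $Y_i=\ast$ otherwise, is $\prod_i p_i^{-1}(Y_i)$. Here $p_i^{-1}(BG_i)=EG_i$, and $p_i^{-1}(\ast)$ is the fibre over the basepoint, namely $G_i$. Preimages commute with unions, so $p^{-1}\big((B\underline G)^L\big) = (E\underline G,\underline G)^L$, and the restricted map coincides with the map produced by Proposition~\ref{prop:functorial} from the pair maps $(EG_i,G_i)\to(BG_i,\ast)$. We thus have a strict pullback square
\[
\begin{CD}
(E\underline G,\underline G)^L @>>> \prod_i EG_i\\
@VVV @VVpV\\
(B\underline G)^L @>>> \prod_i BG_i .
\end{CD}
\]
It follows that $(E\underline G,\underline G)^L \to (B\underline G)^L$ is itself a covering map with fibre $\prod_i G_i$.

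Next I will use the standard fact that pulling back a fibration with contractible total space along a map computes its homotopy fibre. Concretely, $p$ is fibre-homotopy equivalent to the path fibration $P(\prod_i BG_i)\to\prod_i BG_i$, and pullbacks of fibrations preserve fibre-homotopy equivalence. Hence $(E\underline G,\underline G)^L$ is the homotopy fibre of $(B\underline G)^L\to\prod_i BG_i$, which is the homotopy fibration claim. I expect this identification of the strict pullback with the homotopy fibre, together with keeping track of CW structures and basepoints, to be the only step needing care. If needed, I would sidestep it by arguing directly with the long exact sequence of the covering map.

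Finally, apply the long exact homotopy sequence. On $\pi_1$, the map $(B\underline G)^L\to\prod_i BG_i$ is the canonical map $G_L\to\prod_i G_i$: on each $BG_i$ it is the inclusion of the $i$th factor, and these generate $G_L$. Since $\pi_2(\prod_i BG_i)=0$, the map $\pi_1\big((E\underline G,\underline G)^L\big)\to G_L$ is injective with image the kernel $K_L$. Because the canonical map is onto $\prod_i G_i = \pi_1(\prod_i BG_i)$ and $(B\underline G)^L$ is connected, the sequence also shows that $(E\underline G,\underline G)^L$ is connected. When $L$ is flag, $(B\underline G)^L$ is aspherical by Proposition~\ref{prop:aspherical}. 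A connected covering of an aspherical CW complex is aspherical, so $(E\underline G,\underline G)^L$ is a $K(K_L,1)$, i.e. a classifying space for $K_L$.
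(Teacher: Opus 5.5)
Your proposal is correct and follows essentially the same route as the paper. The paper also realises $(E\underline G,\underline G)^L\to(B\underline G)^L$ as the pullback of the universal covering $\prod_i EG_i\to\prod_i BG_i$ along the inclusion, obtaining a regular covering with deck group $\prod_i G_i$ that is connected because the inclusion is surjective on $\pi_1$; your explicit preimage computation and your identification of the strict pullback with the homotopy fibre fill in details that the paper leaves implicit.
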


\begin{proof} 
In fact, we show that the map $(E\underline{G},\underline{G})^L\rightarrow 
(B\underline{G})^L$ is a regular covering map, with $\prod_i G_i$ as its group
of deck transformations.  
In the case when $L$ is the $(m-1)$-simplex, this is just the familiar fact that 
$\prod_i EG_i\rightarrow \prod_iBG_i$ is the universal covering.  In the general case, 
the inclusion $(B\underline{G})^L\rightarrow \prod_i BG_i$ is surjective on 
fundamental groups, and so pulling back the universal covering along this 
map gives a connected covering space of $(B\underline{G})^L$ with $\prod_iG_i$ 
as its group of deck transformations.  Since $(B\underline{G})^L$ is a 
subcomplex of $\prod_iBG_i$, it follows that the pullback is a subcomplex 
of $\prod_iEG_i$.  Moreover, the pullback is isomorphic to 
$(E\underline{G},\underline{G})^L$.  
\end{proof}

Two classical instances illustrate this general construction.  
When each $G_i = \Z$, the pair $(E\Z, \Z)$ is homotopy equivalent to $(\R, \Z)$, yielding the familiar fibration associated to right-angled Artin groups with a homotopy fiber $(E\underline A, \underline A)^L$.  
When each $G_i =C_2$, the pair $(EC_2, C_2)$ is homotopy equivalent to $(D^1, S^0)$, recovering the classical fibration for right-angled Coxeter groups with a homotopy fiber $(E\underline C_2, \underline C_2)^L$.   
For reference, recall that $B\Z \simeq S^1$ with universal cover $\R \to S^1$, and $BC_2 \simeq \R P^\infty$ with universal cover $S^\infty \to \R P^\infty$.

There is a converse to Proposition~\ref{prop:aspherical}, which can be proved
readily by quoting our main result.  

\begin{prop}[Panov--Veryovkin~\cite{PanovVeryovkin2016}]  
If $L$ is not a flag complex and each $G_i$ is non-trivial, then 
$(B\underline{G})^L$ is not aspherical.  
\end{prop}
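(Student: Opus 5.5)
The statement to prove is: if $L$ is not flag and every $G_i$ is non-trivial, then $(B\underline{G})^L$ is not aspherical. I would argue by contradiction, comparing $(B\underline{G})^L$ with the flag complex $\hat L$ it determines. Since $L$ is not flag, there is a minimal non-face $\sigma=\{v_0,\dots,v_k\}$ with $k\ge 2$: every proper subset of $\sigma$ is a simplex of $L$, but $\sigma$ is not. Let $L_\sigma$ be the full subcomplex on $\sigma$; it is the boundary $\partial\Delta^k$. By Lemma~\ref{lemm:retract}, $(B\underline{G})^{L_\sigma}$ is a retract of $(B\underline{G})^L$. A retract of an aspherical space is aspherical, because the higher homotopy groups of the retract inject into those of the ambient space. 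It therefore suffices to show that $(B\underline{G})^{\partial\Delta^k}$, taken over the groups $G_{v_0},\dots,G_{v_k}$, is not aspherical.

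Step 1 is to reduce to cyclic vertex groups. Write $X=(B\underline{G})^{\partial\Delta^k}$. Its $1$-skeleton graph is complete, so $\pi_1(X)=\prod_i G_{v_i}$. Hence the covering space $(E\underline G,\underline G)^{\partial\Delta^k}$ of Proposition~\ref{prop:PV} is simply connected, and this covering is in fact the universal cover of $X$. So $X$ is aspherical if and only if $Y=(E\underline G,\underline G)^{\partial\Delta^k}$ is contractible.

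Step 2 is to show that $Y$ is not contractible. I would use the functoriality of Theorem~\ref{thm:kernel_functoriality}, at the level of spaces via Proposition~\ref{prop:functorial}. For each $i$, pick a set map $C_2=\{1,t\}\to G_{v_i}$ sending $1\mapsto 1$ and $t\mapsto g_i$ with $g_i\neq 1$, and a set map $G_{v_i}\to C_2$ sending $1\mapsto 1$ and everything else to $t$. The composite $C_2\to G_{v_i}\to C_2$ is the identity. Each set map $G\to G'$ extends to a map of pairs $(EG,G)\to(EG',G')$: use simplicial models of $EG$ as the full simplex on $G$, extended simplicially. By functoriality, $Y_{C_2}=(E\underline{C_2},\underline{C_2})^{\partial\Delta^k}$ is then a retract of $Y$ up to homotopy. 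Now $(EC_2,C_2)\simeq(D^1,S^0)$, so $Y_{C_2}$ is homotopy equivalent to the real moment-angle complex $(D^1,S^0)^{\partial\Delta^k}$. This space is the boundary of the cube $[-1,1]^{k+1}$, that is, $S^k$ with $k\ge2$. Therefore $H_k(Y)\neq 0$, so $Y$ is not contractible, and $X$, and hence $(B\underline{G})^L$, is not aspherical.

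I expect the main obstacle to be making the set-map-induced maps of pairs $(EG,G)\to(EG',G')$ honest and functorial, since the $EG_i$ are arbitrary models. I would fix the simplicial model $EG=$ the full simplex (or its geometric realization as the infinite simplex) on the set $G$. With this model any set map induces a simplicial map of pairs, functorially, and Proposition~\ref{prop:functorial}(i) together with $G$-homotopy uniqueness of $EG$ handles the change of models. This is exactly the mechanism behind the main theorem, so quoting Theorem~\ref{thm:kernel_functoriality} would give the retraction on fundamental groups. Here, though, we need it on $H_k$, so the space-level version is the one to invoke.
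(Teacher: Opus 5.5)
Your proof is correct and is essentially the paper's argument. Both reduce to $L=\partial\Delta^k$ via Lemma~\ref{lemm:retract}, pass to the covering $(E\underline{G},\underline{G})^L$ of Proposition~\ref{prop:PV}, retract onto the case where every $G_i$ has order two, and identify $(D^1,S^0)^{\partial\Delta^k}$ with $\partial[-1,1]^{k+1}\cong S^k$. Your insistence on a space-level retraction is the right reading of the paper's appeal to Corollary~\ref{intro_cor}, whose group-level statement is vacuous here since $K_L=1$.

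One small slip: $\Delta_G$ is not a model for $EG$ when $G$ has torsion (e.g.\ $C_2$ fixes the midpoint of $\Delta_{C_2}$). So the change of models should not be justified by $G$-homotopy uniqueness of $EG$. Instead, Lemma~\ref{lem:homotopy_relative} gives $(EG,G)\simeq(\Delta_G,G)$ rel $G$ from contractibility alone, and Proposition~\ref{prop:functorial}(i) then carries this to the polyhedral products.
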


\begin{proof}
By Lemma~\ref{lemm:retract}, it suffices to consider the case when $L$ is 
minimal non-flag, i.e. $L$ is the boundary of an $(m-1)$-simplex 
for some $m\geq 3$.  
By Proposition~\ref{prop:PV}, it suffices to show instead that provided that 
each $G_i$ is non-trivial, $(E\underline{G},\underline{G})^L$ is not aspherical.  
By Corollary~\ref{intro_cor}, it suffices to consider the case when each $G_i$ 
has order two.  In the case when $G$ has order two, the pair $(EG,G)$ is homotopy
equivalent to the pair $(D^1,S^0)$.  Hence in the case when $L$ is the boundary of 
an $(m-1)$-simplex, $(D^1,S^0)^L$ is equal to the boundary of an $m$-cube, a 
space homotopy equivalent to $S^{m-1}$.  
\end{proof}

\section{Proof of Main Result}

\begin{lem}\label{lem:homotopy_relative}
Let $(X,A)$ and $(Y,B)$ be CW-pairs with  $Y$ contractible.  
If $f_1, f_2 : (X,A) \to (Y,B)$ are maps of pairs agreeing on $A$, then $f_1$ and $f_2$ are homotopic relative to $A$.
\end{lem}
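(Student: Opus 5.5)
We will build the homotopy directly, using the straight-line idea on the cylinder $X\times[0,1]$. Consider the subcomplex
\[
Z := X\times\{0,1\}\,\cup\, A\times[0,1] \subseteq X\times[0,1],
\]
and define $h\colon Z\to Y$ by $h(x,0)=f_1(x)$, $h(x,1)=f_2(x)$ and $h(a,t)=f_1(a)=f_2(a)$ for $a\in A$. These pieces agree on the overlaps precisely because $f_1|_A=f_2|_A$. The map $h$ is continuous, since it is continuous on each of the closed subcomplexes $X\times\{0\}$, $X\times\{1\}$ and $A\times[0,1]$, and $Z$ carries the weak topology. Any extension $H\colon X\times[0,1]\to Y$ of $h$ is then a homotopy from $f_1$ to $f_2$ relative to $A$.

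To extend $h$ we apply the standard extension lemma for CW-pairs into a contractible space. The pair $(X\times[0,1],Z)$ is a relative CW complex, and its relative cells are $e\times(0,1)$ for cells $e$ of $X\smallsetminus A$. We extend skeleton by skeleton, in increasing dimension. Suppose $h$ has already been extended over the boundary of a relative cell of dimension $n$. The attaching map gives a map $S^{n-1}\to Y$. Since $Y$ is contractible, $\pi_{n-1}(Y)=0$, so this map is null-homotopic and therefore extends over $D^n$. Extending cell by cell within each skeleton, and then passing to the union over all skeleta (using the weak topology once more), gives a continuous extension $H$ on all of $X\times[0,1]$. Equivalently, one can quote the obstruction-theoretic extension lemma: a map from a subcomplex into a space with vanishing homotopy groups extends over the whole CW complex.

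The only point needing care is continuity at the colimit when $X$ is infinite-dimensional, and this is standard. Note also that the hypothesis that the maps send $A$ into $B$ is not actually used; only the agreement of $f_1$ and $f_2$ on $A$ matters. The resulting homotopy stays inside $Y$ and is stationary on $A$, so it is in particular a homotopy of maps of pairs $(X,A)\to(Y,B)$. This is the form needed for Proposition~\ref{prop:functorial}(i).
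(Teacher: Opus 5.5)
Your proof is correct and follows the same route as the paper. Both arguments extend the map defined on $X\times\{0,1\}\cup A\times[0,1]$ over $X\times[0,1]$, using contractibility of $Y$. The paper invokes the obstruction-theoretic Cellular Extension Theorem, citing vanishing of $H^{n+1}(X\times[0,1],A\times[0,1]\cup X\times\{0,1\};\pi_n(Y))$, while you carry out the underlying cell-by-cell extension directly, which is slightly more elementary.
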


\begin{proof}
Let $f_1, f_2: (X,A) \to (Y,B)$ be maps of pairs agreeing on $A$.  
Consider the CW-pair 
\[
(X \times [0,1],\, A \times [0,1] \cup X \times \{0,1\}),
\] 
and the problem of extending the map defined on the subcomplex
\[
A \times [0,1] \cup X \times \{0,1\} \longrightarrow Y
\]
(which is given by $f_1|_A = f_2|_A$ on $A \times [0,1]$ and by $f_1, f_2$ on $X \times \{0,1\}$)
to a map on all of $X \times [0,1]$.  

Since $Y$ is contractible, all relative cohomology groups 
\[
H^{n+1}(X \times [0,1],\, A \times [0,1] \cup X \times \{0,1\}; \pi_n(Y))
\] 
vanish for $n \ge 1$. By the Cellular Extension Theorem \cite[Corollary 4.73]{hatcher2002algebraic},  there are therefore no obstructions to extending the map.  
Hence, a homotopy $H: X \times [0,1] \to Y$ exists relative to $A$, showing that $f_1$ and $f_2$ are homotopic relative to $A$.
\end{proof}

\begin{proof}[Proof of Theorem \ref{thm:kernel_functoriality}]
Let $L$ be the flag complex determined by $\G$.  
Applying the Cellular Extension Theorem, we extend each $f_i:G_i  \to G_i'$ to a cellular map of pairs 
\[
(\bar f_i, f_i): (EG_i, G_i) \longrightarrow (E G_i', G_i').
\]  
By Lemma \ref{lem:homotopy_relative}, any two such extensions are homotopic relative $G_i$.  
By Proposition \ref{prop:functorial}, this induces a cellular map 
\[
f:(E\underline{G},\underline{G})^L \longrightarrow (E\underline{G}', \underline{G}')^L
\] 
which is well-defined up to homotopy: different choices of extensions yield homotopic maps.  Passing to the fundametal groups, we obtain the desired group homomorphism $\Phi: \KG\to \KG'$.

Functoriality of this construction also follows from Lemma \ref{lem:homotopy_relative}.
\end{proof}

\begin{proof}[Proof of Theorem \ref{thm:general_kernel_functoriality}] Let $\Delta$ be the complete graph on $V(\G)$. The simplicial maps $\G\to \bG\to \Delta$ yield the following commutative diagram with exact rows and columns: 

\[
\begin{tikzcd}
   K_{\G, \bG} \arrow[r, equals] \arrow[d, tail] 
      & K_{\G, \bG}  \arrow[d, tail] 
              \\
   K_{\G} \arrow[r, tail] \arrow[d, two heads] 
      & G_{\G} \arrow[r, two heads] \arrow[d, two heads] 
          & \prod_{i \in V(\G)} G_i  \arrow[d, equals] 
              \\
   K_{\bG} \arrow[r, tail] 
      & G_{\bG} \arrow[r, two heads] 
          & \prod_{i \in V(\G)} G_i 
\end{tikzcd}
\]

\noindent In particular, we obtain the short exact sequence of the kernels $K_{\G,\bG} \rightarrowtail K_{\G} \twoheadrightarrow K_{\bG}$. Similarly, there is a short exact sequence
$K'_{\G,\bG} \rightarrowtail K'_{\G} \twoheadrightarrow K'_{\bG}$. 

Applying Theorem \ref{thm:kernel_functoriality} (observe that the construction of the homomorphism in Theorem \ref{thm:kernel_functoriality} is functorial in $\G$), we obtain the commutative diagram:
\[
\begin{tikzcd}
K_{\G,\bG} \arrow[r, tail] \arrow[d, "\Phi_{\G, \bG}"]
  & K_{\G} \arrow[r, two heads] \arrow[d, "\Phi_{\G}"]
      & K_{\bG} \arrow[d, "\Phi_{\bG}"] \\
K'_{\G,\bG} \arrow[r, tail]
  & K'_{\G} \arrow[r, two heads]
      & K'_{\bG}
\end{tikzcd}
\]
where $\Phi_{\G, \bG}$ is the restriction of $\Phi_{\G}$. The functoriality assertion is immediate from Theorem \ref{thm:kernel_functoriality}.
\end{proof}

Next, we present an even more general version of Theorem \ref{thm:kernel_functoriality}.

\begin{theorem}\label{thm:more_general_kernel_functoriality} Let $\G_1, \G_2$ be a  simplicial graphs and $\bG_1, \bG_2$ be a simplicial graph obtained from $\G$ by adding  some edges. Suppose there is a simplical map  $\psi: \G_1 \to \G_2$ which extends to a simplicial map $\bar\psi:\bG_1 \to \bG_2$. Let $\{G_i\}_{i \in V(\G_1)}$ and $\{G_i'\}_{i \in V(\G_2)}$ be two families of groups. 
Denote,
\[
K_{\G_1, \bG_1} := \ker\Big(G_{\G_1} \longrightarrow G_{\bG_1}\Big), 
\quad 
K_{\G_2, \bG_2}' := \ker\Big(G_{\G_2}' \longrightarrow G'_{\bG_2}\Big).
\] 
\noindent Suppose for each $i \in V(\G_1)$ there is a set map
$f_i : G_i \longrightarrow G_{\psi(i)}'.$
Then these maps induce a  homomorphism $\Phi_{\psi}: K_{\G_1} \longrightarrow K_{\G_2}'$ which restricts to a homomorphism $\Phi_{\psi, \bar\psi}: K_{\G_1, \bG_1} \longrightarrow K_{\G_2, \bG_2}'$. This 
defines a functor from the category whose objects are graph pairs $(\G, \bG)$ with vertex groups $\{G_i\}_{i \in V(\G)}$ and morphisms are pairs of simplicial maps $(\psi, \bar\psi)$ together with set maps $\{f_i\}_{i \in V(\G_1)}$, sending each object to $K_{\G, \bG}$ and each morphism to the induced homomorphism $\Phi_{\psi, \bar\psi}$.  
    
\end{theorem}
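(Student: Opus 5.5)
We follow the proofs of Theorems \ref{thm:kernel_functoriality} and \ref{thm:general_kernel_functoriality}, now using the full generality of Proposition~\ref{prop:functorial}, which allows a simplicial map between the underlying complexes.

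\textbf{Step 1: the map $\Phi_\psi$.} Let $L_1, L_2$ be the flag complexes of $\G_1,\G_2$. A simplicial map of graphs $\psi$ sends cliques to cliques, so it extends uniquely to a simplicial map $\psi\colon L_1\to L_2$; the same holds for $\bar\psi$ on the flag complexes $\bar L_1,\bar L_2$ of $\bG_1,\bG_2$. As in the proof of Theorem~\ref{thm:kernel_functoriality}, use the Cellular Extension Theorem to extend each set map $f_i\colon G_i\to G'_{\psi(i)}$ to a cellular map of pairs $(EG_i,G_i)\to(EG'_{\psi(i)},G'_{\psi(i)})$. By Lemma~\ref{lem:homotopy_relative}, this extension is unique up to homotopy rel $G_i$, since the target is contractible. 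Proposition~\ref{prop:functorial} then gives a cellular map
\[
(E\underline G,\underline G)^{L_1}\to(E\underline G',\underline G')^{L_2},
\]
well defined up to homotopy by part (i). By Proposition~\ref{prop:PV}, its effect on $\pi_1$ is a homomorphism $\Phi_\psi\colon K_{\G_1}\to K'_{\G_2}$, well defined up to the choice of basepoints. The same construction applied with $\bar\psi$ gives $\Phi_{\bar\psi}\colon K_{\bG_1}\to K'_{\bG_2}$.

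\textbf{Step 2: restriction to $K_{\G_1,\bG_1}$.} The identity-on-vertices inclusions $L_1\subseteq\bar L_1$ and $L_2\subseteq\bar L_2$ induce, by Proposition~\ref{prop:functorial} with all $f_i$ the identity, the surjections $K_{\G_j}\to K_{\bG_j}$. Exactly as in the proof of Theorem~\ref{thm:general_kernel_functoriality}, their kernels are $K_{\G_j,\bG_j}$. Since $\bar\psi$ extends $\psi$, the composites $L_1\to L_2\to\bar L_2$ and $L_1\to\bar L_1\to\bar L_2$ are the same simplicial map. The functoriality in Proposition~\ref{prop:functorial}(ii) therefore gives a commuting square of polyhedral products, which holds on the nose for fixed choices of extensions of the $f_i$. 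Passing to fundamental groups, $\Phi_{\bar\psi}$ composed with $K_{\G_1}\to K_{\bG_1}$ equals $K'_{\G_2}\to K'_{\bG_2}$ composed with $\Phi_\psi$. Hence $\Phi_\psi$ carries the kernel $K_{\G_1,\bG_1}$ into $K'_{\G_2,\bG_2}$, and we define $\Phi_{\psi,\bar\psi}$ as this restriction.

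\textbf{Step 3: functoriality.} Take composable morphisms $(\psi,\bar\psi,\{f_i\})$ and $(\chi,\bar\chi,\{g_j\})$. Composing chosen extensions $\bar g_{\psi(i)}\circ\bar f_i$ gives an extension of $g_{\psi(i)}\circ f_i$. By Lemma~\ref{lem:homotopy_relative}, it is homotopic rel $G_i$ to any other extension. Proposition~\ref{prop:functorial}(i),(ii) then gives that the induced map for the composite is homotopic to the composite of the induced maps. For identities, choose identity extensions. Passing to $\pi_1$ and restricting to the kernels gives functoriality.

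\textbf{Main obstacle: basepoints.} The homotopy in Proposition~\ref{prop:functorial}(i) is only relative to $\prod A_i$, so the basepoint must be chosen in $\prod_i G_i$, where maps agree. Pick the basepoint $(1,\dots,1)$, i.e.\ the identity elements, and note it is moved by the maps. To get a genuine functor one may require the $f_i$ to be pointed, sending $1\mapsto1$; alternatively, accept homomorphisms defined up to conjugation in $K'$. The cleanest route is probably to observe that the homotopies are rel the $0$-skeleton $\prod G_i$, which contains the basepoint, so basepoint images are respected strictly. Then one only needs a canonical path from $(f_i(1))_i$ back to the basepoint, or the pointed convention; I would adopt the latter implicitly, as the earlier proofs do.
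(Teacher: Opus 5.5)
Your Steps 1--3 are the paper's proof written out in more detail: extend the $f_i$ to maps of pairs $(EG_i,G_i)\to(EG'_{\psi(i)},G'_{\psi(i)})$, apply Propositions~\ref{prop:functorial} and~\ref{prop:PV} to get $\Phi_\psi$ and $\Phi_{\bar\psi}$, and restrict using the commuting square of the exact sequences $K_{\G_j,\bG_j}\to K_{\G_j}\to K_{\bG_j}$. As in the paper, the step invoking Proposition~\ref{prop:functorial} for an arbitrary simplicial $\psi$ only works when $\psi$ is injective on vertices, since for the pairs $(EG_i,G_i)$ there is no canonical way to merge the coordinates indexed by $\psi^{-1}(j)$ into one point of $EG'_j$.

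Your basepoint caveat is genuinely needed rather than cosmetic. Take the edgeless graph on $a,b$ with all vertex groups $\Z$, and use $f(x)=x+10$ and $g(x)=\max(x,0)$ at both vertices. Then Proposition~\ref{prop:explicit_homomorphism} gives $\Phi_g\Phi_f(a^{-1}b^{-1}ab)=1$ but $\Phi_{g\circ f}(a^{-1}b^{-1}ab)=a^{-1}b^{-1}ab$. So the functor really lives on pointed set maps with $f_i(e_i)=e'_i$, and allowing conjugation in $K'$ would not repair this.
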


\begin{proof} The proof can be carried out completely analogously to the proofs of Theorems \ref{thm:kernel_functoriality} and \ref{thm:general_kernel_functoriality}. First, by Proposition \ref{prop:PV}, one obtains a homorphism $\Phi_{\psi}: K_{\G_1} \longrightarrow K_{\G_2}'$. This construction is functorial on both the associated simplicial graph and the vertex groups. Proceeding as in the proof of Theorem \ref{thm:general_kernel_functoriality}, one obtains the commutative diagram:
\[
\begin{tikzcd}
K_{\G_1,\bG_1} \arrow[r, tail] \arrow[d, "\Phi_{\psi, \bar\psi}"]
  & K_{\G_1} \arrow[r, two heads] \arrow[d, "\Phi_{\psi}"]
      & K_{\bG_1} \arrow[d, "\Phi_{\bar\psi}"] \\
K'_{\G_2,\bG_2} \arrow[r, tail]
  & K'_{\G_2} \arrow[r, two heads]
      & K'_{\bG_2}
\end{tikzcd}
\]
Since the construction of $\Phi_{\psi}$ is functorial, it follows that the construction of $\Phi_{\psi, \bar\psi}$ is as well.
\end{proof}

\section{Explicit formula for the homomorphism}

Next we provide a detailed description of how the functoriality of the kernel is realized. 

Let $\G$ be a finite simplicial graph and let $\{G_i\}_{i \in V(\G)}$ be a collection of vertex groups. 
An element $g \in G_\G$ can be expressed as a product
\[
w = g_1 g_2 \cdots g_n,
\]
where each $g_j$ is an element of some vertex group $G_i$. 
We call $w$ a \emph{word representing $g$}, and the individual $g_j$ are called the \emph{syllables} of $w$.  

\begin{prop}[Explicit formula for $\Phi_\Gamma$]\label{prop:explicit_homomorphism}
Let $\G$ be a finite simplicial graph, and let $\{G_i\}_{i \in V(\G)}$ and $\{G_i'\}_{i \in V(\G)}$ be two families of groups. 
Given a collection of set maps $f_i : G_i \to G_i'$, then the induced homomorphism
$\Phi_\Gamma: K_{\G} \longrightarrow K_{\G}'$
is given by the following formula:

Given $g\in \GG$, let $w \in K_{\G}$ be a word in the vertex groups $\{G_i\}_{i\in V(\G)}$ representing $g$. 
For each $i \in V(\G)$, let $m_i$ be the number of syllables of $w$ belonging to $G_i$, and, for each $1 \le j \le m_i$, let 
$g_{ij} \in G_i$ be the $j$-th occurring syllable from $G_i$ in $w$ (counted from left to right), and define $g_{i0}=e_i$.  

Then, $\Phi_\Gamma(g)$ is the ordered product of syllables $g'_{ij} \in G_i'$ appearing in the same syllable positions, where 
\[
g'_{ij} = f_i(g_{i1} \cdots g_{ij-1})^{-1}\cdot f_i(g_{i1} \cdots g_{ij}),\; \; \forall 1\leq j\leq m_i, \mbox{ and } g'_{i0}=e'_i.
\]

\end{prop}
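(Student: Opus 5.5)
The plan is to compute $\Phi_\Gamma$ topologically, using the model of Proposition~\ref{prop:PV}: $(E\underline G,\underline G)^L$ is the covering space of $(B\underline G)^L$ with deck group $\prod_i G_i$. Since the induced map is independent of the chosen extensions by Lemma~\ref{lem:homotopy_relative}, I will fix convenient models. For each $i$ take $BG_i$ to have a single $0$-cell $\ast$ and a $1$-cell $e_g$ for each $g\in G_i$, with $2$-cells attached along the multiplication relations. Then $EG_i$ has $0$-skeleton $G_i$ and an edge from $h$ to $hg$ for each $h,g\in G_i$. In the same way $\prod_i EG_i$ has $0$-skeleton $\prod_i G_i$. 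The $0$-cell $(e_i)_i$ serves as basepoint of $(E\underline G,\underline G)^L$, and it lies there because every $0$-cell of $\prod_i EG_i$ lies in the polyhedral product.

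First, lift words. A word $w=g_1\cdots g_n$ determines an edge path in $(B\underline G)^L$: the syllable $g_k\in G_{i(k)}$ gives the loop $e_{g_k}$ in the $i(k)$-th factor, and each such loop lies in the polyhedral product because vertices are simplices of $L$. Lift this path from the basepoint. After reading $g_1\cdots g_{k}$, the lift sits at the vertex whose $i$-th coordinate is the product $g_{i1}\cdots g_{ij}$ of the syllables from $G_i$ read so far. The $k$-th edge moves only the $i(k)$-th coordinate, from $g_{i1}\cdots g_{ij-1}$ to $g_{i1}\cdots g_{ij}$, and it lies in $EG_{i}\times\prod_{l\neq i}G_l\subseteq (E\underline G,\underline G)^L$. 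The lift is closed exactly when $w\in K_\Gamma$, so by Proposition~\ref{prop:PV} this closed edge path represents $g$ in $\pi_1$.

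Next, choose the extension $\bar f_i$ of $f_i$ so that each edge from $h$ to $hg$ in $EG_i$ is mapped onto the edge from $f_i(h)$ to $f_i(hg)$ in $EG_i'$, that is, the edge labelled $f_i(h)^{-1}f_i(hg)$. This is cellular on the $1$-skeleton and extends over higher cells because $EG_i'$ is contractible (the Cellular Extension Theorem, as in the proof of Theorem~\ref{thm:kernel_functoriality}). The product map restricted to the polyhedral product then sends the lifted path to an edge path in $(E\underline G',\underline G')^L$. Its $k$-th edge changes the $i$-th coordinate from $f_i(g_{i1}\cdots g_{ij-1})$ to $f_i(g_{i1}\cdots g_{ij})$, so it is the edge labelled $g'_{ij}$ and it occupies the same syllable position as $g_k$. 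The basepoint goes to $(f_i(e_i))_i$ rather than $(e_i')_i$.

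The main obstacle is this basepoint issue, which is why the formula imposes the convention $g'_{i0}=e'_i$, i.e.\ $f_i(e_i)$ should be replaced by $e_i'$. I would handle it as follows. Translating by the deck transformation $(f_i(e_i)^{-1})_i$ identifies fundamental groups at the two basepoints canonically. The translated path projects to the loop labelled by the word $\prod g'_{ij}$, whose telescoping products show it lies in $K'_\Gamma$. Equivalently, I would first replace $f_i$ by $f_i(e_i)^{-1}f_i$, which does not change the $g'_{ij}$ for $j\ge1$. Finally, pushing down to $(B\underline G')^L$ and applying Proposition~\ref{prop:PV}, this loop represents the word $\prod g'_{ij}$ in $G'_\Gamma$, which is the claimed $\Phi_\Gamma(g)$. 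Well-definedness with respect to the choice of word $w$ is automatic from the topological construction.
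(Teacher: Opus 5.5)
Your argument is correct and is essentially the paper's proof. Both replace $(EG_i,G_i)$ by a model whose $1$-skeleton is a Cayley graph, identify the word $w$ with an edge path in the polyhedral product, and read off the image edges $\mathbf{f}(\mathbf{g})^{-1}\mathbf{f}(\mathbf{g}s)$ under $\mathbf{f}=\prod_i f_i$, with the basepoint shift absorbed by the deck group $\prod_i G_i'$. The only differences are these: the paper uses the full simplex $\Delta_{G_i}$, where the extension of $f_i$ is canonically simplicial, and obtains the formula through a recursion on words checked by induction. Your bar-type model is the Segal-model variant mentioned in the paper's remark, and it lets you read the formula directly off the coordinates of the lifted path.
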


We will give a topological proof of Proposition~\ref{prop:explicit_homomorphism}, but 
first we need to introduce some more notation.  For groups $G_i$ as in the statement, 
let $S$ be the disjoint union of the sets $G_i-\{e_i\}$, i.e., the collection of all 
syllables, and let $S^*$ denote the collection of all words in the elements of $S$, 
including an empty word denoted $e$.  Define $S'$ and $S'^*$ similarly in terms of 
the groups $G'_i$.  Given set maps $f_i:G_i\rightarrow G'_i$ as 
in the statement, denote by $\mathbf{f}$ the set map $\mathbf{f}:\prod_i G_i\rightarrow 
\prod_iG'_i$ defined by 
\[\mathbf{f}(g_1,\ldots,g_m)=(f_1(g_1),\ldots,f_m(g_m)).\]  

By a slight abuse of notation, we identify elements of $S$ with their images in $G_\Gamma$ 
for each graph $\Gamma$ with vertex set $\{1,\ldots,m\}$, and thus obtain a surjective map 
$S^*\rightarrow G_\Gamma$.  In particular, this identifies 
$S$ with the set of elements of $\prod_iG_i$ 
that are not equal to the identity in exactly one coordinate: if $s\in G_i\cap S$, 
this identifies $s$ with $(e_1,\ldots,e_{i-1},s,e_{i+1},\ldots,e_m)$.  

\begin{prop} With notation as described above, for any $\bold{g}\in \prod_iG_i$ 
and any $s\in S$, $\mathbf{f}(\mathbf{g})^{-1}\mathbf{f}(\mathbf{g}s)$ is either 
an element of $S'$ or is the identity element of $\prod_iG'_i$.  There 
is a unique map $\Phi:S^*\rightarrow S'^*$ 
defined recursively as follows: $\Phi(e)=e'$, i.e., $\Phi$ sends the empty word to the 
empty word, and for any $s\in S$ and any $w\in S^*$, 
\[\Phi(ws)= \begin{cases} 
\Phi(w) &  \text{if}\,\,\,\, \mathbf{f}(w)=\mathbf{f}(ws),\\
\Phi(w) (\mathbf{f}(w)^{-1}\mathbf{f}(ws)) & \text{if}\,\,\,\,\mathbf{f}(w)\neq \mathbf{f}(ws).  
\end{cases}\]
For each graph $\Gamma$ with vertex set $\{1,\ldots,m\}$ 
the map $\Phi$ passes to a well-defined set map of groups $\Phi_\Gamma:G_\Gamma\rightarrow 
G'_\Gamma$, and this restricts to a group homomorphism $\Phi_\Gamma:K_\Gamma\rightarrow K'_\Gamma$.  Each of these maps and homomorphisms is functorial in the set maps $f_i$.  
\end{prop}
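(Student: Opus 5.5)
The proof works entirely with images in the direct product. For a word $w\in S^*$, write $p(w)\in\prod_iG_i$ for its image; this is what $\mathbf f(w)$ means in the recursion. First claim: if $s\in G_i\cap S$, then $\mathbf g$ and $\mathbf g s$ differ only in the $i$-th coordinate. So $\mathbf f(\mathbf g)^{-1}\mathbf f(\mathbf g s)$ has all coordinates trivial except possibly the $i$-th, namely $f_i(g_i)^{-1}f_i(g_is)$. It is therefore either the identity or a single element of $S'$. Existence and uniqueness of $\Phi$ then follow by induction on word length, since the recursion defines $\Phi(ws)$ from $\Phi(w)$.

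The key structural remark is that the recursion is \emph{local}. For $\mathbf g\in\prod_iG_i$ define $\Psi_{\mathbf g}:S^*\to S'^*$ by the same recursion, but started at the state $\mathbf g$ instead of the identity. Induction on the length of $u$ gives
\[
\Phi(wu)=\Phi(w)\,\Psi_{p(w)}(u).
\]
Thus $\Phi(wu)$ depends on $w$ only through $\Phi(w)$ and $p(w)$, and the state after reading $u$ is $p(w)p(u)$. To show that $\Phi$ descends to $G_\Gamma$, it suffices to show that for every state $\mathbf g$, $\Psi_{\mathbf g}$ respects the elementary moves relating words with the same image in $G_\Gamma$. Because $p$ is unchanged by these moves, the suffix after a move is processed from the same state. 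There are two moves.
\begin{enumerate}[(i)]
\item Replace $st$, with $s,t\in G_i$, by the syllable $st$, or by the empty word if $st=e_i$. Here the two $i$-th coordinate contributions telescope:
\[
f_i(g_i)^{-1}f_i(g_is)\cdot f_i(g_is)^{-1}f_i(g_ist)=f_i(g_i)^{-1}f_i(g_ist).
\]
Identity contributions are dropped, so the two sides agree in $G'_\Gamma$.
\item Replace $st$ by $ts$, with $s\in G_i$, $t\in G_j$ and $\{i,j\}$ an edge. Reading $s$ does not change the $j$-th coordinate of the state. Hence the syllable produced by $t$ is the same whether or not $s$ has been read first, and likewise for $s$. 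The two outputs are $s't'$ and $t's'$ with $s'\in G'_i$, $t'\in G'_j$, and these commute in $G'_\Gamma$.
\end{enumerate}
This gives a well-defined map $\Phi_\Gamma:G_\Gamma\to G'_\Gamma$.

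For the kernel, apply the coordinatewise telescoping above to the whole word. This gives
\[
p'(\Phi(w))=\mathbf f(e)^{-1}\mathbf f(p(w)),
\]
which is trivial when $w$ represents an element of $K_\Gamma$, so $\Phi_\Gamma(K_\Gamma)\subseteq K'_\Gamma$. If $p(w)=e$, the local formula gives $\Phi(wu)=\Phi(w)\Psi_e(u)=\Phi(w)\Phi(u)$. So $\Phi_\Gamma$ is multiplicative on $K_\Gamma$, and in particular is a homomorphism there.

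Functoriality is the step I expect to be delicate. Let $h_i:G'_i\to G''_i$ be further set maps. The syllable $\Phi^{\mathbf h}$ appends at a step is computed from the state $\mathbf f(e)^{-1}\mathbf f(p)$, while $\Phi^{\mathbf h\circ\mathbf f}$ uses $\mathbf h(\mathbf f(p))$. These agree when $\mathbf f(e)=e$. I would therefore first note that replacing each $f_i$ by its left translate $f_i(e_i)^{-1}f_i$ does not change $\Phi$, since only the quotients $\mathbf f(\mathbf g)^{-1}\mathbf f(\mathbf g s)$ enter. I would then prove $\Phi^{\mathbf h}\circ\Phi^{\mathbf f}=\Phi^{\mathbf h\circ\mathbf f}$ by induction on word length for basepoint-preserving maps, tracking the state. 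If the unpointed composition does not agree on the nose, I would check it on $K_\Gamma$ using the topological model. By Lemma~\ref{lem:homotopy_relative}, the induced maps on fundamental groups depend only on the maps on the fibres $G_i$ up to homotopy. I would also identify this word-level $\Phi$ with the map of Theorem~\ref{thm:kernel_functoriality} by tracking edge paths in $(E\underline G,\underline G)^L$: a syllable $s\in G_i$ corresponds to an edge from $\mathbf g$ to $\mathbf g s$ in the $EG_i$ direction, and this edge is carried to a path from $\mathbf f(\mathbf g)$ to $\mathbf f(\mathbf g s)$. That gives functoriality from the topological statement.
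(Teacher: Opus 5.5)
Your proof of the first three assertions is correct, and it takes a genuinely different route from the paper's. The paper argues topologically. It replaces $(EG_i,G_i)$ by $(\Delta_{G_i},G_i)$ and notes that the $1$-skeleton of $(\Delta_{\underline G},\underline G)^L$ is the Cayley graph of $\prod_iG_i$ with respect to $S$. It then uses Panov--Veryovkin to identify $G_\Gamma$ and $K_\Gamma$ with homotopy classes of edge paths and loops from $\mathbf e$, and reads $\Phi$ off the polyhedral map induced by $\mathbf f$. Descent to $G_\Gamma$ and multiplicativity on $K_\Gamma$ then come for free from continuity. Your moves (i) and (ii) are exactly the triangles (and backtracks) and the squares of that complex. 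Your formula $\Phi(wu)=\Phi(w)\Psi_{p(w)}(u)$ is the combinatorial form of the equivariant edge labelling. Checking the moves directly makes your argument self-contained. It needs only that the congruence on $S^*$ generated by (i) and (ii) presents $G_\Gamma$, which holds because every syllable has an inverse syllable. No asphericity or homotopy equivalences are needed.

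The gap is in functoriality. Your induction correctly gives $\Phi^{\mathbf h}\circ\Phi^{\mathbf f}=\Phi^{\mathbf h\circ\mathbf f}$ whenever $\mathbf f(\mathbf e)=\mathbf e'$. However, your fallback for unpointed $\mathbf f$ cannot succeed, because the claim is false there, even on $K_\Gamma$. Put $\mathbf c=\mathbf f(\mathbf e)$. Your normalization together with the pointed case gives $\Phi^{\mathbf h}\circ\Phi^{\mathbf f}=\Phi^{\mathbf h\circ(\mathbf c^{-1}\mathbf f)}$. So you would need $\Phi^{\mathbf h}$ to be unchanged when $\mathbf h$ is \emph{pre}composed with a translation, and it is not.

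Here is a counterexample. Let $\Gamma$ have two vertices and no edge, with $G_1=G_2=\Z/3$ written additively. Let $a=1\in G_1$, $b=1\in G_2$ and $w=aba^{-1}b^{-1}\in K_\Gamma$. Take $\mathbf f=(x\mapsto x+1,\ \mathrm{id})$, so $\Phi^{\mathbf f}$ is the identity of $S^*$. Take the based map $\mathbf h=(h_1,\mathrm{id})$ with $h_1(0)=h_1(1)=0$ and $h_1(2)=1$. The two $G_1$-syllables of $w$ are read at states $0$ and $1$, so $\Phi^{\mathbf h}$ sends them to $h_1(1)-h_1(0)=0$ and $h_1(0)-h_1(1)=0$. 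This gives $\Phi^{\mathbf h}\Phi^{\mathbf f}(w)=bb^{-1}=1$. But $h_1\circ f_1$ takes the values $0,1,0$ at $0,1,2$, so $\Phi^{\mathbf h\circ\mathbf f}(w)=aba^{-1}b^{-1}\neq 1$ in $\Z/3*\Z/3$.

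Lemma~\ref{lem:homotopy_relative} does not help here. It only controls the choice of extension over $EG_i$ relative to $G_i$. The discrepancy comes instead from identifying $\pi_1$ at $\mathbf f(\mathbf e)$ with $\pi_1$ at $\mathbf e'$ via a deck translation, which does not commute with $\mathbf h$. Appealing to Theorem~\ref{thm:kernel_functoriality} is no better. The paper derives functoriality from the functoriality of $(\Delta_{\underline G},\underline G)^L$ in sets, and overlooks the same shift. There $\mathbf f$ carries a path from $\mathbf e$ to a path from $\mathbf f(\mathbf e)$, but $\Phi^{\mathbf h}$ reads $\Phi^{\mathbf f}(w)$ as a path from $\mathbf e'$. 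What is true, and what your induction proves, is functoriality for based maps $f_i(e_i)=e_i'$, that is, on pointed sets.
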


\begin{proof} 
Let $L=L(\Gamma)$ be the clique complex of the graph $\Gamma$, and let $BG_i$ be a 
classifying space for the group $G_i$ with a fixed basepoint $*$.  As described in 
Section~\ref{sec:prelims}, the polyhedral product $(B\underline{G})^L$ is a 
classifying space for the graph product $G_\Gamma$, and by Proposition~\ref{prop:PV} 
the polyhedral product $(E\underline{G},\underline{G})^L$ is a classifying space for 
the subgroup $K_L$, where the subspace $G_i\subseteq EG_i$ is the set of lifts of the
basepoint.  Moreover, the maps $EG_i\rightarrow BG_i$ induce the covering map 
$BK_L\rightarrow BG_L$.  

By definition, $G_\Gamma$ is identified with the set of homotopy classes of based loops in 
$(B\underline{G})^L$.  For any fixed basepoint in $(E\underline{G},\underline{G})^L$ 
this gives an identification of the set $G_\Gamma$ with the set of homotopy classes 
of paths in $(E\underline{G},\underline{G})^L$ that start the chosen lift of the 
basepoint and end at some possibly different lift of the basepoint.  Under this 
identification the subgroup $K_\Gamma$ is identified with the subset of homotopy classes 
of closed loops.  In this way, the groups that arise in the statement are realized as
sets of homotopy classes of paths in $(E\underline{G},\underline{G})^L$.  Our method 
of proof involves replacing the pair $(EG_i,G_i)$ by a homotopy equivalent pair in 
such a way that the collection $S^*$ of all words in $S$ is realized as the collection 
of edge paths in the relevant polyhedral product starting at some fixed basepoint.  

For any set $V$, let $\Delta_V$ denote the `simplex' with vertex set $V$, i.e., 
$\Delta_V$ is the simplicial complex whose vertex set is $V$ and whose simplices
are all of the finite subsets of $V$.  Thus $\Delta_V$ is a contractible complex
whose vertex set is $V$.  This construction is a functor from sets and functions 
to simplicial complexes and simplicial maps; in particular there is a natural 
bijection between the set of simplicial maps $\Delta_V\rightarrow \Delta_{V'}$ and 
the set of functions $V\rightarrow V'$.  

Now consider the case when $V=G$, a group.  In this case $\Delta_G$ is a contractible 
complex with $G$ as its set of vertices and with a natural action of $G$ that extends 
the standard action of $G$ on itself.  For each $i$, there is a homotopy equivalence 
of pairs $(EG_i,G_i)\rightarrow (\Delta_{G_i},G_i)$ that extends the identity map 
on $G_i$, and so by Proposition~\ref{prop:functorial}, the polyhedral products 
$(\Delta_{\underline{G}},\underline{G})^L$ and $(E\underline{G},\underline{G})^L$
are homotopy equivalent.  The $0$-skeleton of $(\Delta_{\underline{G}},\underline{G})^L$ 
is the set $\prod_iG_i$.  For any basepoint $\mathbf{g}\in \prod_iG_i$, the 
discussion above gives an identification between $G_\Gamma$ and the set of 
homotopy classes of paths from $\mathbf{g}$ to some element of $\prod_iG_i$ 
in $(\Delta_{\underline{G}},\underline{G})^L$, with the subgroup $K_\Gamma$ 
identified with the homotopy classes of loops based at $\mathbf{g}$. 

Since each $\Delta_{G_i}$ is a simplicial complex, the polyhedral product 
$(\Delta_{\underline{G}},\underline{G})^L$ is a polyhedral complex in which 
each polyhedron that arises is a product of simplices.  The polyhedral cells 
of $(\Delta_{\underline{G}},\underline{G})^L$ are easily described: the subsets
of the vertex set $\prod_iG_i$ that arise as the vertex set of some polyhedron 
are of the form $V_1\times V_2\times\cdots \times V_m$, where each $V_i$ is a 
non-empty finite subset of $G_i$, and if we define $\sigma\subseteq V(\Gamma)$
as $\sigma=\{i\in V(\Gamma)\,\,:\,\,|V_i|>1\}$, then $\sigma$ is either the 
empty set (in which case the polyhedron is a vertex) or $\sigma$ is a simplex 
of $L$.  This polyhedron is the product of simplices of dimensions $|V_i|-1$.  
It is apparent from this description that the group structure of $G_i$ is 
irrelevant to the structure of the polyhedral product 
$(\Delta_{\underline{G}},\underline{G})^L$, and that the construction of 
this polyhedral product can be viewed as a functor from ordered $m$-tuples 
of \emph{sets} and set maps to polyhedral complexes and polyhedral maps.  
The map $\mathbf{f}:\prod_iG_i\rightarrow \prod_i G'_i$ is the map on 
vertex sets, and so a polyhedron $V_1\times V_2\times \cdots \times V_m$ in 
$(\Delta_{\underline{G}},\underline{G})^L$ is sent to the polyhedron 
$(f_1(V_1)\times f_2(V_2)\times \cdots \times f_m(V_m))$ in 
$(\Delta_{\underline{G}},\underline{G})$.  To study fundamental groups
of course we only need to understand the 2-skeleta of these polyhedral 
products, so we give more detailed description of these.  

Any 2-cell in a product of simplicial complexes is either a triangle 
or a square, i.e., either a 2-simplex or a product of two 1-simplices.  
If $\mathbf{g},\mathbf{h}\in \prod_iG_i$ are a pair of distinct vertices of 
the polyhedral product, there is an edge between them if and only if 
there is a single $i\in \{1,\ldots,m\}$ for which $g_i\neq h_i$ or 
equivalently \[\mathbf{h}=(h_1,\ldots,h_m)=(g_1,\ldots,g_{i-1},h_i,g_{i+1}\ldots,g_m).\] 
Similarly, distinct vertices $\mathbf{g},\mathbf{h},\mathbf{k}$ span a 2-simplex 
if and only if there is a single $i$ for which $g_i,h_i,k_i$ are all distinct, 
while $g_j=h_j=k_j$ for each $j\neq i$.  Squares are most easily described by 
a long diagonal: if $\mathbf{g}$ and $\mathbf{h}$ are two sequences with the 
property that $g_k=h_k$ for each $k$ except that $g_i\neq h_i$ and $g_j\neq h_j$ 
for some $i<j$ such that $\{i,j\}$ is a 1-simplex of $L$ (or equivalently an edge of 
$\Gamma$), there is a square whose four vertices are 
$(g_1,\ldots,g_i,\ldots,g_j,\ldots g_m)$, 
$(g_1,\ldots,h_i,\ldots,g_j,\ldots g_m)$, 
$(g_1,\ldots,g_i,\ldots,h_j,\ldots g_m)$ and 
$(g_1,\ldots,h_i,\ldots,h_j,\ldots g_m)$.  

To find the connection between edge paths and the set $S^*$ of words in $S$, we need 
to \emph{label} the directed edges of $(\Delta_{\underline{G}},\underline{G})^L$ 
by elements of $S$.  The group $\prod_iG_i$ acts naturally on $(\Delta_{\underline{G}},\underline{G})^L$  via its free transitive action on the vertex set, and this action 
is compatible with the action of $\prod_iG_i$ on $(E\underline{G},\underline{G})^L$ 
by deck transformations, in the sense that any $G_i$-equivariant maps 
$(EG_i,G_i)\rightarrow (\Delta_{G_i},G_i)$ together give a $\prod_iG_i$-equivariant 
map $(E\underline{G},\underline{G})^L\rightarrow (\Delta_{\underline{G}},\underline{G})^L$.  
As noted above, two vertices $\mathbf{g}$ and $\mathbf{h}$ form an edge if and only if 
there is a single $i$ such that $g_i\neq h_i$.  Thus the element 
$\mathbf{g}^{-1}\mathbf{h}$ is of the form $(e_1,\ldots,s_i,\ldots e_m)$ for this same 
$i$, and so this element lies in $S$.  Thus every directed edge (i.e., every ordered 
pair $(\mathbf{g},\mathbf{h})$ such that $\{\mathbf{g},\mathbf{h}\}$ is an edge) is 
expressible in the form $(\mathbf{g},\mathbf{g}s)$ for some $s\in S$.  This gives a
$\prod_iG_i$-equivariant map from the directed edges of 
$(\Delta_{\underline{G}},\underline{G})^L$  to $S$, with the property that for any 
vertex $\mathbf{g}$, each label $s$ appears on exactly one directed edge starting 
at $\mathbf{g}$.  Hence the 1-skeleton of $(\Delta_{\underline{G}},\underline{G})^L$ 
is identified with the Cayley graph of $\prod_iG_i$ with respect to the generating 
set $S$.  If we fix a base vertex, such as $\mathbf{e}\in \prod_iG_i$, we thus 
get an identification of the edge paths starting at $\mathbf{e}$ with the set 
$S^*$ of words in $S$.  Passing from edge paths to homotopy classes  (relative to 
end points) of the edge paths corresponds to the natural map $S^*\rightarrow G_L$.  

Now that $S^*$, $G_L$, $K_L$ and $\prod_iG_i$ have all been identified in terms 
of $(\Delta_{\underline{G}},\underline{G})^L$ together with a choice of basepoint, 
we also get analogous descriptions for $S'^*$, $G'_L$, $K'_L$, and $\prod_iG'_i$.  
The maps $\Phi$ and $\Phi_{\G}$  appearing in the statement are the functions 
induced by $\mathbf{f}:(\Delta_{\underline{G}},\underline{G})^L\rightarrow 
(\Delta_{\underline{G'}},\underline{G'})^L$,  and it remains to compute them.  
Our base vertex $\mathbf{e}$ will map to some vertex $\mathbf{g}'$.  A directed 
edge from $\mathbf{g}$ to $\mathbf{g}s$ will map to the directed edge from 
$\mathbf{f}(\mathbf{g})$ to $\mathbf{f}(\mathbf{g}s)$, provided that these two 
vertices are distinct, and the label on this edge will be 
$\mathbf{f}(\mathbf{g})^{-1}\mathbf{f}(\mathbf{g}s)$, which is an element of $S'$. 
If on the other hand $\mathbf{f}(\mathbf{g})=\mathbf{f}(\mathbf{g}s)$ then 
the edge collapses to a vertex.  In this way, one sees that the map $\Phi:S^*\rightarrow S'^*$
described in the statement is the induced map from the set of edge paths starting 
at some vertex $\mathbf{g}$ to the set of edge paths starting at $\mathbf{f}(\mathbf{g})$.  
The claim concerning $\Phi_{\G} :\GG\rightarrow \GG'$ follows immediately, as does the 
fact that $\Phi_{\G}$  is a homomorphism when restricted to the subgroup $K_L$ of closed
paths.  
\end{proof}

\begin{rmk}\rm As we have noted earlier, the polyhedral products 
$(\Delta_{\underline{G}},\underline{G})^L$ and $(E\underline{G},\underline{G})^L$
are homotopy equivalent. However,
the complex $\Delta_G$ used in the above argument is only a model for $EG$ in the 
case when $G$ is torsion-free.  If $G$ contains a non-trivial finite subgroup 
$H$ the action is no longer free: the barycentre of each subsimplex 
$\Delta_{Hg}\subseteq \Delta_G$ with vertex set a coset $Hg$ is fixed by the 
action of the subgroup $H$.  
In the general case, $\Delta_G$ (or rather its barycentric subdivision, 
the simplicial complex arising as the realization of the poset of finite 
subsets of $G$) is a model for $\underline{E}G$, the universal proper $G$-CW-complex (for definition, see Section \ref{sec:Brown}).  
\par
It is possible to rephrase the argument using the pair $(E_G,G)$ instead of 
$(\Delta_G,G)$, where $E_G$ denotes Segal's simplicial set model for $EG$, which 
is the universal cover of Segal's model $B_G$ for $BG$.  The geometry here becomes 
a little more complicated, which is why we prefer $\Delta_G$.  The non-degenerate 
$k$-simplices of $E_G$ are given by ordered $(k+1)$-tuples $(g_0,g_1,\ldots,g_k)$
such that for each $i<k$, $g_i\neq g_{i+1}$.  Just as for $\Delta_G$, the group 
structure on $G$ is irrelevant to the construction of $E_G$ and so it may be 
viewed as a functor from sets to simplicial sets.  Since the vertices of each 
simplex come with an ordering, each edge has its own preferred orientation, 
and so there are two distinct edge paths of length one from $g$ to $h$: travel 
along the edge $(g,h)$ in its preferred orientation, or travel along the edge 
$(h,g)$ in its opposite orientation.  The `2-simplex' $(g,h,g)$ is a triangle
in which the side corresponding to the subsequence $(g,g)$ collapses to a point, 
so this 2-simplex provides a homotopy between these two length one edge paths. 
The proof goes fairly similarly, except that now $S^*$ is identified with the 
set of \emph{directed edge paths}, i.e., edge paths in which edges are only 
travelled in their preferred direction.  Every sequence of adjacent vertices 
arises from exactly one directed edge path and any edge path is homotopic relative
to its vertex set to a directed edge path. 
\end{rmk}

\begin{proof}[Proof of Proposition~\ref{prop:explicit_homomorphism}]
It remains only to verify that the two descriptions of the group homomorphism 
$\Phi_\Gamma: K_\Gamma\rightarrow K'_\Gamma$ agree.  In fact the definition 
of $\Phi_\Gamma$ given in the statement of Proposition~\ref{prop:explicit_homomorphism}
extends by the same formula to a \emph{set map} $\Phi_\Gamma:G_\Gamma\rightarrow G'_\Gamma$, 
and we verify that the two definitions agree by induction on the length of the given 
syllable representation for $g\in G_\Gamma$.  Under both definitions, the identity element 
maps to the identity element.  Let $\pi:G_\Gamma\rightarrow \prod_iG_i$ be the projection 
from the graph product to the direct product.   If the two definitions agree on $g$, then the 
second definition gives that $\Phi_\Gamma(gs)=\Phi_\Gamma(g)\mathbf{f}(\pi(g))^{-1}\mathbf{f}(\pi(g)s)$, 
where we do not need to split into cases because we are working in the group rather than 
with words.  If $\pi(g)=\mathbf{g}=(g_1,\ldots,g_m)$ and $s\in G_i\cap S$ then 
$\pi(g)s=(g_1,\ldots,g_is,\ldots,g_m)$ and so 
\begin{align*}
\mathbf{f}(\pi(g))^{-1}&\mathbf{f}(\pi(g)s) \\
=&(f_1(g_1)^{-1},\ldots,f_i(g_i)^{-1},\ldots 
f_m(g_m)^{-1})(f_1(g_1),\ldots,f_i(g_is),\ldots,f_m(g_m)) \\ = &
(e_1,\ldots,e_{i-1},(f_i(g_i)^{-1}f_i(g_is)),e_{i+1},\ldots, e_m).  
\end{align*}
In this notation, $g_i$ is the ordered product of all the earlier syllables $g'_{ij}\in G_i$ 
for $w$ and $s=g_{ij}$, so the two expressions are equal.  
\end{proof}

Let $\G$ be a simplicial graph and let $\{G_i\}_{i \in V(\G)}$ be a collection of vertex groups. The \emph{length} of a word $W\in \GG$, denoted $l(W)$, is the number of syllables it contains.
The following \emph{elementary moves} transform a word $W\in \GG$ into another word $W'$ representing the same element of $G_\G$, without increasing its length:

\begin{enumerate}
    \item Remove a trivial syllable: if $g_j = e_i\in G_i$.
    \item Merge consecutive syllables in the same vertex group: if $g_j, g_{j+1} \in G_i$, replace them by $g_j g_{j+1}$.
    \item Swap commuting syllables: if $g_j \in G_i$, $g_{j+1} \in G_k$, and $\{i,k\}$ is an edge in $\G$, exchange $g_j$ and $g_{j+1}$.
\end{enumerate}

\begin{deff}\label{def:normal_form}\rm
A word $W$ representing $g \in G_\G$ is said to be in \emph{normal form} if no sequence of the above elementary moves can produce a strictly shorter word representing~$g$.
\end{deff}

 \begin{example}\label{ex:W_A} \rm We illustrate the formula of Proposition \ref{prop:explicit_homomorphism} in a case where $\GG$ is the right-angled Coxeter group. 
 Let $\G$ be an arbitrary simplicial graph with vertex set $V(\G)$, and let $W_{\G}$
be the corresponding right-angled Coxeter group. Let $\{G_i'\}_{i\in V(\G)}$ be any collection of groups, and let $f_i : C_2 \longrightarrow G_i'$
be arbitrary set maps.  

Let $W \in [W_\G, W_\G]$ be a word in normal form in $W_\G$, and for each $i \in V(\G)$, let $g_{ij}$, $1 \le j \le m_i$, denote the $j$-th occurrence of the nontrivial generator from $G_i$ in $W$.  

Then the induced homomorphism $\Phi: [W_\G, W_\G]\to K_\G'$ simplifies according to the parity of the syllable index $j$:
\[
g'_{ij} =
\begin{cases}
f_i(e_i)^{-1} \cdot f_i(g_{ij}), & \text{if $j$ is odd},\\[2mm]
f_i(g_{ij})^{-1} \cdot f_i(e_i), & \text{if $j$ is even}.
\end{cases}
\]
Thus, $\Phi(W)$ is obtained by replacing each syllable $g_{ij}$ in $W$ with $g'_{ij}$ in the same syllable position.

Moreover, let $\GG'$ be the right-angled Artin group associated to $\G$, and define $f_i: C_2 \to \Z$ by sending each generator of $C_2$ to a generator $x \in \Z$ and the identity of $C_2$ to $0 \in \Z$.  
Then the formula further simplifies to
\[
g'_{ij} =
\begin{cases}
x, & \text{if $j$ is odd},\\[1mm]
x^{-1}, & \text{if $j$ is even}.
\end{cases}
\]

Similarly, we can consider the epimorphisms $f_i': \Z \to C_2$.  
Let $\Psi: [A_\G, A_\G] \longrightarrow [W_\G, W_\G]$ denote the induced homomorphism on the commutator subgroups.  
Note that since $f_i' \circ f_i = \mathrm{id}$, it follows that $\Phi$ is a retract of $\Psi$, which can also be seen from the explicit formulas of $\Phi$ and $\Psi$.

\end{example}

\section{Some Applications}

\subsection{Poly-freeness} A group $G$ is {\it poly-free} if there exists a finite chain of normal subgroups
$$1\unlhd G_1 \unlhd \cdots \unlhd G_n=G$$
such that $G_{i+1}/G_i$ is a free group.

\begin{cor}\label{cor:poly-free_kernel}
    Let $\G$ be a finite simplicial graph and let $\GG$ be the graph product of countable groups $\{G_i\}_{i \in V(\G)}$. Then $\KG$ is poly-free.
\end{cor}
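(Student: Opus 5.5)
The plan is to deduce this from Corollary~\ref{intro_cor}(ii) together with the poly-freeness of the commutator subgroup of a finitely generated right-angled Artin group. By Corollary~\ref{intro_cor}(ii), since each $G_i$ is countable, $\KG$ is a retract of $[A_\G,A_\G]$, where $A_\G$ is the right-angled Artin group on the same finite graph $\G$. Concretely, one chooses set maps $f_i\colon G_i\to\Z$ and $h_i\colon \Z\to G_i$ with $h_i\circ f_i=\mathrm{id}_{G_i}$, which is possible because $G_i$ is countable (choose an injection $G_i\hookrightarrow\Z$ and a left inverse). The functoriality in Theorem~\ref{thm:kernel_functoriality} then gives homomorphisms $\KG\to K_{A_\G}=[A_\G,A_\G]\to\KG$ whose composite is the identity. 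In particular, $\KG$ embeds as a subgroup of $[A_\G,A_\G]$, and hence as a subgroup of $A_\G$.

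The second step uses that $A_\G$ is a finitely generated right-angled Artin group, so it is poly-free by \cite{HerSun07}. The remaining point is that poly-freeness passes to subgroups. Let $1\unlhd H_1\unlhd\cdots\unlhd H_n=A_\G$ be a poly-free series and let $K\le A_\G$. Intersecting gives a series $K\cap H_1\unlhd\cdots\unlhd K\cap H_n=K$ with normality preserved. Each successive quotient $(K\cap H_{j+1})/(K\cap H_j)$ embeds in $H_{j+1}/H_j$, which is free, so by Nielsen--Schreier the quotient is free. Hence $K$ is poly-free, and applying this with $K=\KG$ finishes the argument.

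None of these steps is a real obstacle once Corollary~\ref{intro_cor}(ii) is available. The only points needing care are, first, that the definition of poly-free adopted here allows free factors of arbitrary rank, so that quotients of a subgroup may be infinitely generated free groups; and second, that the embedding of $\KG$ really lands in a finitely generated RAAG. The second holds because $\G$ is finite. If one prefers not to pass through Corollary~\ref{intro_cor}(ii), the same argument works directly from Theorem~\ref{thm:kernel_functoriality}: the homomorphism induced by the injections $f_i$ has the left inverse induced by the $h_i$, so it is injective.
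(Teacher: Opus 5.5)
Your argument is correct, but it is not the paper's main proof; it is the alternative the authors state in one line in the Remark immediately after that proof. Both routes begin the same way, reducing to $G_i=\Z$ via Corollary~\ref{intro_cor}.

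The paper then inducts on the number of non-edges of $\G$. Adding one edge gives $\bG$, and the paper uses the exact sequence $K_{\G,\bG}\rightarrowtail \KG\twoheadrightarrow K_{\bG}$ from the proof of Theorem~\ref{thm:general_kernel_functoriality}. It shows $K_{\G,\bG}$ is free because it lies in Howie's free kernel $N=\ker(A_\G\to A_{\G'})$, where $\G'$ is $\G$ with an endpoint of the new edge deleted. So the paper's poly-free series is built from the kernels $K_{\G,\bG}$ themselves, has length at most the number of non-edges, and illustrates the kernel sequences of Theorem~\ref{thm:general_kernel_functoriality}.

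You instead quote poly-freeness of $A_\G$ as a black box and intersect its series with $\KG$. That result rests on the same Howie kernels, one per vertex, so your route gives length at most $|V(\G)|$ and is shorter. Your explicit check that poly-freeness passes to subgroups also fills a step the paper leaves implicit. It is needed in the reduction to $G_i=\Z$, which transfers poly-freeness from $[A_\G,A_\G]$ to its retract $\KG$, and in the Remark.

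One small correction to your concrete retraction: take the injections with $f_i(e_i)=0$. The labels of $\Phi_f(w)$ are computed after translating by $\mathbf f(\mathbf e)^{-1}$; for example, $g'_{i1}=f_i(e_i)^{-1}f_i(g_{i1})$. Consequently $\Phi_h\circ\Phi_f=\Phi_{h\circ f}$ holds when each $f_i$ preserves the identity, but can fail otherwise. Here is a counterexample.
\begin{itemize}
\item Let $\G$ be two non-adjacent vertices $a,b$ with vertex groups $C_2=\{0,1\}$.
\item At both vertices, take $f\colon C_2\to C_3$ with $0\mapsto 1$, $1\mapsto 2$, and $h\colon C_3\to C_2$ with $0,1\mapsto 0$, $2\mapsto 1$. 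Then $h\circ f=\mathrm{id}$.
\item Yet $\Phi_f([a,b])=a'b'a'^{-1}b'^{-1}$, with $a',b'$ the elements $1$ of the two copies of $C_3$, and $\Phi_h$ sends this word to the identity.
\end{itemize}
This does not affect your conclusion, because you only need $\KG$ to embed in $[A_\G,A_\G]$. Corollary~\ref{lem:normal_form} shows that $\Phi$ preserves normal length, hence is injective, for any injective $f_i$.
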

\begin{proof} By Corollary \ref{intro_cor}, it suffices to consider the case when $G_i=\Z$ and $\GG=A_{\G}$. If $\G$ as a complete graph, then $\KG=1$ and there is nothing to prove. Assume that $\G$ is not complete, and let $\overline{\G}$ be the graph obtained from $\G$ by adding a single edge. Proceeding by induction, we can assume that $K_{\bG}$ is poly-free. Then, as in the proof of Theorem \ref{thm:general_kernel_functoriality}, there is a short exact sequence of groups
$$K_{\G,\bG} \rightarrowtail K_{\G} \twoheadrightarrow K_{\bG}.$$ 
We claim that $K_{\G,\bG}$ is a free group. To see this, consider the short exact sequence:
$$K_{\G,\bG} \rightarrowtail G_{\G} \twoheadrightarrow G_{\bG}.$$ 
Let $v\in V(\G)$ adjacent to the added edge in $\bG$. Let $\G'$ be the full subgraph of $\G$ (and hence of $\bG$) on the vertex set $V(\G)\smallsetminus v$. Denote by $N$ the kernel of the standard epimorphism from $\GG$ to $G_{\G'}$.
Since this epimorphism factors through $G_{\bG}$, it follows that $K_{\G,\bG}$ is a subgroup of $N$. But $N$ is a free group, as noted in the proof of Lemma 1 of \cite{Howie99}. Hence, $K_{\G,\bG}$ is also free.
\end{proof}

\begin{rmk}\rm Alternatively to the above argument, one may note that by the proof of Lemma 1 of \cite{Howie99} (see also \cite{HerSun07}), $A_{\G}$ is poly-free and hence, so is $[A_{\G}, A_{\G}]$.  
\end{rmk}

As an immediate application, we obtain:

\begin{cor}\label{cor:poly-free_graph_prodcut}
    Let $\G$ be a finite simplicial graph and let $\GG$ be the graph product of countable poly-free groups $\{G_i\}_{i \in V(\G)}$. Then, $\GG$ is poly-free.
\end{cor}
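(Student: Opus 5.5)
We will use the short exact sequence
\[
K_{\G} \rightarrowtail G_{\G} \twoheadrightarrow \prod_{i \in V(\G)} G_i
\]
together with the fact that extensions of poly-free groups by poly-free groups are poly-free. By Corollary~\ref{cor:poly-free_kernel}, since each $G_i$ is countable, $K_{\G}$ is poly-free. It therefore suffices to prove two things: that the finite direct product $\prod_i G_i$ of poly-free groups is poly-free, and that poly-freeness is closed under extensions.

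For the direct product, suppose $G_i$ has a normal series with free factors of length $n_i$. We concatenate the series: first run through the series of $G_1$ inside $G_1\times 1\times\cdots\times 1$, then through the series of $G_2$ in the second coordinate, and so on. Each term has the form $G_1\times\cdots\times G_{k-1}\times H\times 1\times\cdots\times 1$, where $H$ is a term of the series of $G_k$. Each such term is normal in the next one, and the consecutive quotients are the free quotients of the original series.

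For extensions, let $1\to A\to B\to Q\to 1$ with $A$ and $Q$ poly-free. Take a series $1=A_0\unlhd\cdots\unlhd A_r=A$ and a series $1=Q_0\unlhd\cdots\unlhd Q_s=Q$, and let $B_j$ be the preimage of $Q_j$ in $B$. Then
\[
1=A_0\unlhd\cdots\unlhd A_r=A=B_0\unlhd B_1\unlhd\cdots\unlhd B_s=B.
\]
Here $B_{j}\unlhd B_{j+1}$ because $Q_j\unlhd Q_{j+1}$, and $B_{j+1}/B_j\cong Q_{j+1}/Q_j$ is free. Since the definition in the paper only asks for subnormal series, normality in $B$ itself is not required. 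Applying this with $A=K_{\G}$, $B=G_{\G}$ and $Q=\prod_i G_i$ gives the result.

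The only delicate point is this matter of normality. If instead one requires each term to be normal in the whole group, the concatenation can fail, because $A_i$ need not be normal in $B$. I will read the definition as written, with each $G_i$ normal in $G_{i+1}$, under which the argument above is complete. No other obstacle is expected: the substantive input is Corollary~\ref{cor:poly-free_kernel}, and that in turn rests on Corollary~\ref{intro_cor}(ii), which is where countability is used.
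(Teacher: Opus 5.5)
Your proof is correct and is the argument the paper intends when it calls this an immediate application: $\KG$ is poly-free by Corollary~\ref{cor:poly-free_kernel}, $\prod_i G_i$ is poly-free, and poly-freeness is closed under extensions, applied to $\KG \rightarrowtail \GG \twoheadrightarrow \prod_i G_i$. Reading the definition as a subnormal series is also correct, and it matches the paper's own usage, since the inductive proof of Corollary~\ref{cor:poly-free_kernel} uses extension-closure in the same way.
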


\subsection{Normal length metric} 

  We recall the normal form for elements of a graph product and the associated
syllable length, which induces a canonical left-invariant metric on the kernel
$K_{\G}$.

\begin{deff}\label{def:normal_form}\rm
 We define the {\it normal length}, denoted $nl(g)$, of $g\in G_{\G}$ as the length of the word $W$ in normal form that represents $g$. 
\end{deff}

The normal length of an element $g$ in $\GG$ is well-defined because of the uniqueness (up to permutation of syllables) of the word in normal form representing $g$ \cite[Theorem 4.8]{Green1991}.  

\begin{deff}\label{def:normal_metric}\rm
The \emph{normal length metric} on $G_{\G}$ is the left-invariant metric
$d_{nl}$ defined by
\[
d_{nl}(g,h) := nl(g^{-1}h), \quad g,h \in G_{\G}.
\]
We use the same notation for its restriction to the kernel $K_{\G}$.
\end{deff}

\begin{rmk}\label{remark:word_length}\rm
In the case of a right-angled Coxeter group $W_{\G}$, the normal length and its induced metric are precisely the same as the word length and the word metric derived from the standard generating set of $W_{\G}$. We note that the growth series of $W_{\G}$ with respect to the word metric is explicitly computed, e.g.\,\cite[Chapter 17]{Davis_book}.
\end{rmk}

From Proposition \ref{prop:explicit_homomorphism}, we obtain:

\begin{cor}\label{lem:normal_form}
    Let $\G$ be a simplicial graph, and let $\{G_i\}_{i \in V(\G)}$ and $\{G_i'\}_{i \in V(\G)}$ be two families of groups. Given a collection of injective set maps $f_i : G_i \longrightarrow G_i'$, suppose
$\Phi : K_{\G} \longrightarrow K_{\G}'$  is the induced homomorphism. Then, for any $g\in \KG$, we have 
$$nl(g)= nl(\Phi(g)).$$
\end{cor}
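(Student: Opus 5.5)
Take $g \in \KG$ and let $w = g_1 g_2\cdots g_n$ be a word in normal form representing $g$, so $n = nl(g)$. Apply the formula of Proposition~\ref{prop:explicit_homomorphism} to $w$. This produces a word $w'$ with the same number of syllable positions, each occupied by $g'_{ij} = f_i(g_{i1}\cdots g_{i,j-1})^{-1} f_i(g_{i1}\cdots g_{ij})\in G_i'$. The vertex labels are the same as in $w$. Two facts then give the two inequalities.

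First, I show that every $g'_{ij}$ is nontrivial. The partial products $p_{i,j-1}=g_{i1}\cdots g_{i,j-1}$ and $p_{ij}=p_{i,j-1}g_{ij}$ differ because $g_{ij}\neq e_i$, since a normal form has no trivial syllables. As $f_i$ is injective, $f_i(p_{i,j-1})\neq f_i(p_{ij})$, so $g'_{ij}\neq e_i'$. Hence $w'$ is a word of length $n$ with all syllables nontrivial and the same vertex-labelling pattern as $w$. This already gives $nl(\Phi(g))\le n$.

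Second, I show that $w'$ is in normal form. The obstruction to being in normal form depends only on the sequence of vertex labels. Concretely, by the standard criterion from \cite{Green1991}, a word with nontrivial syllables is reduced exactly when no two syllables from the same vertex group $G_i$ can be brought together by swaps across syllables whose vertices are adjacent to $i$ in $\G$. Equivalently, between any two occurrences of $i$ there is some syllable whose vertex is not adjacent to $i$ (and is not $i$). This is a condition purely on the label sequence, and $w$ and $w'$ share that sequence with all syllables nontrivial. So $w'$ is reduced and $nl(\Phi(g))=n=nl(g)$.

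The main point needing care is invoking Green's criterion in exactly this combinatorial form. Elementary moves could in principle merge two syllables of $w'$ whose product becomes trivial, but such a merger requires the same adjacency configuration in $w$, and there merging would already shorten $w$. I would make this precise by noting that any sequence of elementary moves on $w'$ that reduces length can be mirrored on $w$: swaps are label-determined, and any merge reduces the length by at least one regardless of whether the product is trivial. Mirroring the sequence on $w$ therefore contradicts the normality of $w$.
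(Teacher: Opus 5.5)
Your proposal is correct and is essentially the paper's argument: injectivity of $f_i$, together with $g_{ij}\neq e_i$, makes every $g'_{ij}$ nontrivial, and since whether two syllables from the same vertex group can be brought together by swaps depends only on the sequence of vertex labels, any shortening of $w'$ would give a shortening of $w$. One point to state cleanly is that you should mirror only the swaps up to and including the first merge, not the whole sequence. That merge is the first length-reducing move, because $w'$ has no trivial syllables, and later moves on $w'$ may involve products, such as a trivial merged syllable, that have no counterpart in $w$.
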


\begin{proof} We use the notation of Proposition \ref{prop:explicit_homomorphism}. Let $W\in K_{\G}$ be a word in a normal form representing $g\in \GG$.  
By Proposition \ref{prop:explicit_homomorphism}, $\Phi(g)$ is represented by the word $W'\in K_{\G'}$ whose syllables are 
\[
g'_{ij} = f_i(g_{i0} \cdots g_{ij-1})^{-1} \cdot f_i(g_{i0} \cdots g_{ij}) \in G_i' \mbox{ and } g'_{i0}=e'_i,
\]
appearing in the same syllable positions as in $W$.  
It suffices to show that $W'$ is in normal form.

Each $g'_{ij} \neq e_i'$, for otherwise
\[
f_i(g_{i0} \cdots g_{ij-1})^{-1} \cdot f_i(g_{i0} \cdots g_{ij}) = e_i',
\]
which would contradict the injectivity of $f_i$. Hence, no syllable in $W'$ can be removed.

Suppose $W'$ could be shortened by a sequence of commuting moves. Then, for some $i \in V(\G)$, there exist $1 \le p \le q \le m_i$ such that the syllables $g'_{ip}$ and $g'_{iq}$ can be made consecutive. This is because any syllables occurring between them in $W'$ must come from vertex groups adjacent to $i$, and thus can be swapped past $g'_{ip}$ and $g'_{iq}$. The corresponding syllables in $W$ could then also be brought together in the same way, producing a shorter word than $W$, contradicting its normal form.

\end{proof}

\begin{cor}\label{cor:isometric_embedding}
Under the assumptions of Corollary~\ref{lem:normal_form}, the induced homomorphism
\[
\Phi : (K_{\G}, d_{nl}) \longrightarrow (K_{\G}', d_{nl})
\]
is an isometric embedding. 
\end{cor}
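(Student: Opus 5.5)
The plan is to derive the isometric embedding from Corollary~\ref{lem:normal_form} together with the left-invariance of $d_{nl}$ and the fact that $\Phi$ is a homomorphism on $K_{\G}$. Both metrics are left-invariant by Definition~\ref{def:normal_metric}, so for $g,h\in\KG$ the element $g^{-1}h$ again lies in $\KG$. Since $\Phi$ is a homomorphism by Theorem~\ref{thm:kernel_functoriality}, we have $\Phi(g)^{-1}\Phi(h)=\Phi(g^{-1}h)$. Applying Corollary~\ref{lem:normal_form} to the element $g^{-1}h\in\KG$ then gives
\[
d_{nl}(\Phi(g),\Phi(h)) = nl\bigl(\Phi(g^{-1}h)\bigr) = nl(g^{-1}h) = d_{nl}(g,h).
\]
So $\Phi$ preserves distances.

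To conclude that $\Phi$ is an embedding, I will check injectivity. This follows directly from distance preservation. If $\Phi(g)=\Phi(h)$, then $d_{nl}(g,h)=0$, so $nl(g^{-1}h)=0$. The only element whose normal form is the empty word is the identity, hence $g=h$. Equivalently, the kernel of $\Phi$ consists of elements of normal length zero, i.e.\ only the identity.

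The only real content is Corollary~\ref{lem:normal_form}, and I take it as given. The one point to double-check is that the corollary applies to an arbitrary element of $\KG$, not just to the chosen representatives. It does: it is stated for every $g\in\KG$, and $g^{-1}h\in\KG$ because $\KG$ is a subgroup. I expect no significant obstacle beyond making this closure explicit.
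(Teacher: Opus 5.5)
Your proof is correct and is essentially the paper's argument: the paper's proof is exactly the chain $d_{nl}(\Phi(g),\Phi(h)) = nl(\Phi(g^{-1}h)) = nl(g^{-1}h) = d_{nl}(g,h)$, using that $\Phi$ is a homomorphism together with Corollary~\ref{lem:normal_form} applied to $g^{-1}h\in K_{\G}$. Your extra injectivity check is fine but not needed as a separate step, since any distance-preserving map between metric spaces is automatically injective.
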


\begin{proof}
For all $g,h \in K_{\G}$,
\[
d_{nl}(\Phi(g),\Phi(h)) = nl(\Phi(g^{-1}h)) = nl(g^{-1}h) = d_{nl}(g,h),
\]
where the second equality follows from Corollary~\ref{lem:normal_form}.
\end{proof}

\begin{cor}\label{cor:isometric_RAAG}
Let $\G$ be a simplicial graph and suppose that each vertex group $G_i$ is
nontrivial and countable.
Then the kernel $K_{\G}$ is isometrically isomorphic to the commutator subgroup
$[A_{\G},A_{\G}]$ of the right-angled Artin group on $\G$, with respect to the
normal length metric.
\end{cor}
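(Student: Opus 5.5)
The plan is to combine the functoriality of Theorem~\ref{thm:kernel_functoriality} with the isometry statement of Corollary~\ref{cor:isometric_embedding}, applied to bijections between the vertex groups and $\Z$.

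\textbf{Caveat on the hypothesis.} The argument needs each $G_i$ to be countably \emph{infinite}, as in Corollary~\ref{intro_cor}(i). For finite vertex groups the claimed isomorphism can fail. Take $\G$ to be two non-adjacent vertices with $G_i=C_2$. Then $\KG$ is the infinite cyclic subgroup generated by $(ab)^2$ in the infinite dihedral group. On the other hand, $[A_\G,A_\G]=[F_2,F_2]$ is free of infinite rank. So I read ``nontrivial and countable'' as ``countably infinite''. In the finite case the same argument, using only injections $G_i\hookrightarrow\Z$, yields an isometric embedding $\KG\hookrightarrow[A_\G,A_\G]$ rather than an isomorphism.

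\textbf{Identifying the target.} First I will check that for $G_i=\Z$ for all $i$ the kernel $K_\G$ is exactly $[A_\G,A_\G]$. The canonical map $A_\G\to\prod_i\Z=\Z^{V(\G)}$ is the abelianization of $A_\G$, since the relators of $A_\G$ are commutators and abelianizing the free group on $V(\G)$ gives $\Z^{V(\G)}$. Its kernel is therefore the commutator subgroup.

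\textbf{The isomorphism.} For each $i$, fix a bijection $f_i:G_i\to\Z$ with inverse $h_i:\Z\to G_i$.
\begin{enumerate}[(i)]
\item By Theorem~\ref{thm:kernel_functoriality}, these induce homomorphisms $\Phi:\KG\to[A_\G,A_\G]$ and $\Psi:[A_\G,A_\G]\to\KG$.
\item By functoriality, $\Psi\circ\Phi$ is the map induced by the family $\{h_i\circ f_i\}=\{\mathrm{id}_{G_i}\}$.
\item The identity family induces the identity on $\KG$. This holds because a functor sends identities to identities. It can also be read off the formula in Proposition~\ref{prop:explicit_homomorphism}: with $f_i=\mathrm{id}$ one gets $g'_{ij}=(g_{i1}\cdots g_{i,j-1})^{-1}g_{i1}\cdots g_{ij}=g_{ij}$.
\item The same reasoning gives $\Phi\circ\Psi=\mathrm{id}$, so $\Phi$ is an isomorphism.
\end{enumerate}

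\textbf{The isometry.} Each $f_i$ is injective, so Corollary~\ref{lem:normal_form} gives $nl(\Phi(g))=nl(g)$ for every $g\in\KG$. Corollary~\ref{cor:isometric_embedding} then shows that $\Phi$ is an isometric embedding for $d_{nl}$. Since $\Phi$ is also surjective, it is an isometric isomorphism.

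\textbf{Main obstacle.} The only delicate point is the one flagged at the start: the hypothesis must be strengthened to countably infinite, or the conclusion weakened to an isometric embedding when some $G_i$ is finite. Everything else is immediate from the cited results.
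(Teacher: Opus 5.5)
Your proof is correct and is the argument the paper intends; the paper gives no separate proof of this corollary. Bijections $G_i\to\Z$ together with the functoriality of Theorem~\ref{thm:kernel_functoriality} give the isomorphism of Corollary~\ref{intro_cor}(i), and Corollary~\ref{cor:isometric_embedding} makes it an isometry.

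Your caveat is also right. With merely nontrivial countable vertex groups the statement is false in general: your $C_2*C_2$ example is valid, since there $\KG=\langle (ab)^2\rangle\cong\Z$ while $[F_2,F_2]$ is free of infinite rank. The hypothesis should therefore read ``countably infinite'', as in Corollary~\ref{intro_cor}(i). When some $G_i$ is finite, only an isometric embedding (in fact a retract) is available.
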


 \subsection{Hereditary properties} Let  $\C$ be a class of groups satisfying the following  properties:

\begin{enumerate}[(P1)]
    \item Every subgroup of any finitely generated right-angled Artin group is in $\C$;
    \item If $B$ is a group that has a normal subgroup $A\in \C$ such that $B/A \in \C$, then $B\in \C$.
    
\end{enumerate}

\begin{cor}\label{general_property} Let $\C$ be a class of groups satisfying properties $(P1)$ and  $(P2)$.
Let $\G$ be a finite simplicial graph and let $\GG$ be the graph product of countable groups $\{G_i\}_{i \in V(\G)}$.  If each $G_i\in \C$, then $\GG\in \C$.
\end{cor}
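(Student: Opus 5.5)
We will use the short exact sequence
\[
1\longrightarrow \KG \longrightarrow \GG \longrightarrow \prod_{i\in V(\G)} G_i \longrightarrow 1
\]
together with property (P2), applied twice.

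The first step is to show that $\prod_{i} G_i \in \C$. The product is built by successive extensions:
\[
1\to G_{i_1}\times\cdots\times G_{i_{k-1}} \to G_{i_1}\times\cdots\times G_{i_k}\to G_{i_k}\to 1 .
\]
Each $G_i$ lies in $\C$ by hypothesis. So induction on the number of vertices, using (P2) at each stage, gives $\prod_i G_i\in\C$. The trivial group also lies in $\C$ by (P1), being a subgroup of any right-angled Artin group; this covers the empty case.

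The second step is to show that $\KG\in\C$. Since each $G_i$ is countable, Corollary~\ref{intro_cor}(ii) says that $\KG$ is a retract of the commutator subgroup $[A_\G,A_\G]$ of the right-angled Artin group on $\G$. In particular $\KG$ is isomorphic to a subgroup of $A_\G$.

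\begin{itemize}
\item If some $G_i$ is trivial, the retraction still holds, because the set maps $G_i\to\Z$ and $\Z\to G_i$ with composite the identity on $G_i$ exist whenever $|G_i|\le|\Z|$.
\item Alternatively, one can first delete the vertices with trivial groups. This changes neither $\GG$ nor the product, so without loss each $G_i$ is nontrivial.
\end{itemize}

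Since $\G$ is finite, $A_\G$ is a finitely generated right-angled Artin group. Hence $\KG\in\C$ by (P1).

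Finally, apply (P2) to the displayed extension: $\KG\in\C$ is normal in $\GG$ and the quotient $\prod_iG_i$ lies in $\C$, so $\GG\in\C$.

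The only point needing care is the embedding of $\KG$ into $A_\G$. This is exactly where countability enters: it supplies the set maps $G_i\to\Z\to G_i$ whose composite is the identity. By the functoriality of Theorem~\ref{thm:kernel_functoriality}, the induced homomorphisms $\KG\to[A_\G,A_\G]\to\KG$ then compose to the identity, so the first map is injective.
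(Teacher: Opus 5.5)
Your proof is correct and follows the paper's argument: you embed $\KG$ in $[A_\G,A_\G]$ via Corollary~\ref{intro_cor}(ii), apply (P1), and then apply (P2) to the extension $\KG\rightarrowtail\GG\twoheadrightarrow\prod_iG_i$. Your explicit induction showing $\prod_iG_i\in\C$ by repeated use of (P2), and your remark on trivial vertex groups, fill in points that the paper's proof leaves implicit.
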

\begin{proof} Suppose that each $G_i\in \C$. By Corollary \ref{intro_cor},  $\KG$ is a subgroup of  $[A_\G, A_\G]$ and hence, is in $\C$ by (P1). 
   We have an extension: 
$$1\to K_\G \to \GG\to \prod_{i \in V(\G)} G_i\to 1.$$
Applying (P2), it follows that $\GG\in \C$. 
\end{proof}

Our first application of Corollary \ref{general_property} is to the Baum–Connes Conjecture with coefficients with finite wreath products.

\begin{cor}\label{cor:Baum--Connes}
Let $\G$ be a countable simplicial graph  and let $\GG$ be the graph product of countable groups $\{G_i\}_{i \in V(\G)}$.  
If each vertex group $G_i$ satisfies BCC with finite wreath products, then $\GG$ also satisfies BCC with finite wreath products.
\end{cor}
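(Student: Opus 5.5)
The plan is to apply Corollary~\ref{general_property} to the class $\C$ of countable groups satisfying BCC with finite wreath products, first for finite $\G$, and then pass to countable $\G$ by a direct limit argument. This requires checking the two axioms (P1) and (P2) for $\C$, and establishing closure of $\C$ under countable increasing unions.

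For (P2), I will simply cite the extension stability of BCC with finite wreath products, \cite[Theorem~4.3(6)]{NP25}. For (P1), let $H$ be a subgroup of a finitely generated right-angled Artin group $A$.
\begin{enumerate}[(a)]
\item $A$ acts properly on a CAT(0) cube complex (the universal cover of its Salvetti complex), so $A$ has the Haagerup property. Hence $H$ is a-T-menable, being a subgroup of $A$.
\item For a finite group $F$, the wreath product $H\wr F$ contains $H^{F}$ as a finite index subgroup. A finite direct product of a-T-menable groups is a-T-menable, and a-T-menability passes to finite index overgroups (by inducing a proper affine isometric action). So $H\wr F$ is a-T-menable.
\item By Higson--Kasparov, $H\wr F$ satisfies the Baum--Connes conjecture with coefficients. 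Since $H$ is countable, $H\in\C$.
\end{enumerate}
Corollary~\ref{general_property} then gives the result whenever $\G$ is finite.

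For a countable graph $\G$, enumerate its vertices and let $\G_n$ be the full subgraph on the first $n$ vertices. By Lemma~\ref{lemm:retract}, each $G_{\G_n}$ is a retract of $G_{\G_{n+1}}$, and in particular embeds in it. Since every relation of $\GG$ involves only finitely many vertices, $\GG=\bigcup_n G_{\G_n}$ is an increasing union of subgroups. Each $G_{\G_n}$ lies in $\C$ by the finite case. For a finite group $F$ we have $\GG\wr F=\bigcup_n (G_{\G_n}\wr F)$, again an increasing union, and each term satisfies BCC with coefficients.

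The remaining input is that the Baum--Connes conjecture with coefficients passes to countable increasing unions of subgroups. I would quote this from the permanence properties in \cite[Theorem~4.3]{NP25} if it is listed there, and otherwise from the known continuity of the conjecture with coefficients under direct limits with injective connecting maps (work of Oyono-Oyono / Chabert--Echterhoff). I expect this limit step to be the main point requiring care: one must make sure that the permanence result used is stated for coefficients and for unions rather than only for the trivial coefficient case. The algebraic parts (P1), (P2) and the finite case are then routine.
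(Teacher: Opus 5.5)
Your proposal is correct and follows the paper's proof. The finite case goes through Corollary~\ref{general_property}, with (P1) from a-T-menability and Higson--Kasparov and (P2) from \cite[Theorem~4.3(6)]{NP25}, and the countable case is handled as an increasing union over finite full subgraphs. The differences are cosmetic: the paper gets (P1) by applying subgroup-closure \cite[Theorem~4.3(2)]{NP25} to the a-T-menable group $A_\G$ instead of checking by hand that $H\wr F$ is a-T-menable, and it settles the union step you flagged by citing \cite[Theorem~4.3(7)]{NP25} directly for BCC with finite wreath products instead of passing through $\GG\wr F=\bigcup_n (G_{\G_n}\wr F)$.
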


\begin{proof} First, assume $\G$ is finite.  Let $\C$ be the class of groups satisfying BCC with finite wreath products. Recall that any right-angled Artin group satisfies  BCC with finite wreath products as it is a-T-amenable  \cite{NR97, HK01}.
Since BCC with finite wreath products is subgroup and extension closed property \cite[Theorem 4.3(2), (6)]{NP25},  it follows that $\C$ satisfies properties (P1) and (P2).
Therefore, by Corollary \ref{general_property}, $\GG\in \C$. 

In the case when $\G$ is infinite, then $\GG$ is an increasing countable union of subgroups that are graph products of an increasing union of finite full subgraphs of $\G$. The result then follows from \cite[Theorem 4.3(7)]{NP25}. 
\end{proof}

Recall, a group $G$ is said to be {\it locally indicable}, if every finitely generated subgroup of $G$ admits an epimorphism to $\Z$. We are able to deduce the following special case of \cite[Corollary 5.8]{Antolin_Minasyan_2015}.

\begin{cor}\label{cor:loc_indicable}
Let $\G$ be a finite simplicial graph  and let $\GG$ be the graph product of countable groups $\{G_i\}_{i \in V(\G)}$.  
If each vertex group $G_i$ is locally indicable, then $\GG$ is locally indicable.
\end{cor}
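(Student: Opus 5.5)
We prove local indicability through the extension
\[
1\to K_\G \to \GG \to \prod_{i\in V(\G)} G_i \to 1
\]
together with two standard closure properties of local indicability. The first is that it passes to subgroups, which is immediate from the definition. The second is that it is closed under extensions: if $A\unlhd B$ with $A$ and $B/A$ locally indicable, then $B$ is locally indicable. To check the second, take a finitely generated nontrivial $H\le B$. If the image of $H$ in $B/A$ is nontrivial, then it is a finitely generated nontrivial subgroup of $B/A$, so it surjects onto $\Z$, and hence so does $H$. Otherwise $H\le A$, and $H$ surjects onto $\Z$ directly. Here, as usual, the definition is read as requiring the surjection only for nontrivial finitely generated subgroups.

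We then put the class $\C$ of locally indicable groups into Corollary~\ref{general_property}. Property (P2) is exactly the extension closure just verified. Property (P1) requires every subgroup of a finitely generated right-angled Artin group to be locally indicable. By subgroup closure, it suffices to show that right-angled Artin groups themselves are locally indicable.

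For this we use poly-freeness. By the remark following Corollary~\ref{cor:poly-free_kernel} (the argument of Howie), $A_\G$ is poly-free. Free groups are locally indicable: a nontrivial finitely generated subgroup of a free group is free of positive rank, so it maps onto $\Z$. Applying extension closure along the poly-free series then shows $A_\G$ is locally indicable. This is the key point of the argument.

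Alternatively, one could bypass (P1). Since $\KG$ is a subgroup of $[A_\G,A_\G]$ by Corollary~\ref{intro_cor}, and is in fact poly-free by Corollary~\ref{cor:poly-free_kernel}, $\KG$ is locally indicable. A finite direct product of locally indicable groups is locally indicable, by extension closure again. A single application of (P2) to the displayed extension then finishes the proof. Either way, the countability hypothesis enters only through Corollary~\ref{intro_cor}. The main step needing care is extension closure, specifically the case where the image of $H$ in the quotient is trivial, which forces $H\le A$.
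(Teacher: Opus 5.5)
Your proof is correct, and its skeleton matches the paper's: both feed the class of locally indicable groups into Corollary~\ref{general_property}. The paper restricts to countable locally indicable groups, which makes no difference here.

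The routes differ only in how the inputs are secured.
\begin{itemize}
\item For (P1), the paper cites Theorem~2.18 of \cite{Green1991} for local indicability of finitely generated right-angled Artin groups. You instead deduce it from poly-freeness of $A_\G$, which is Howie's argument and is already used in the paper for Corollary~\ref{cor:poly-free_kernel}. You combine this with Nielsen--Schreier and extension closure.
\item For (P2), the paper just says it is ``easy to see.'' You prove extension closure explicitly, including the case $H\le A$.
\end{itemize}
As a result, your argument uses only ingredients already present in the paper. The paper's version is shorter but outsources the key fact about right-angled Artin groups to Green.

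Your alternative is also valid and slightly more economical. There you apply (P2) once to $\KG \rightarrowtail \GG \twoheadrightarrow \prod_i G_i$, using that $\KG$ is poly-free by Corollary~\ref{cor:poly-free_kernel} and hence locally indicable. This never needs to consider arbitrary subgroups of $A_\G$.
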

\begin{proof} Let $\C$ be the class of all countable locally indicable groups. It is easy to see that $\C$ satisfies properties (P2).

By Theorem 2.18 of \cite{Green1991}, any finitely generated right-angled Artin group is locally indicable. Since local indicability is a subgroup-closed property, by Corollary \ref{general_property}, $\GG\in \C$. 
\end{proof}

Recall, a group $G$ is said to satisfy {\it Tits Alternative}, if either $G$ is virtually solvable or it contain a non-abelian free subgroup. We are able to deduce the special case of \cite[Theorem A]{Antolin_Minasyan_2015}.

\begin{cor}\label{cor:Tits_alternative}
Let $\G$ be a finite simplicial graph  and let $\GG$ be the graph product of countable groups $\{G_i\}_{i \in V(\G)}$.  
If each vertex group $G_i$ satisfies Tits Alternative, then $\GG$ does as well.
\end{cor}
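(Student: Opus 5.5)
The plan is to use the extension
\[
1\to \KG \to \GG \to \prod_{i \in V(\G)} G_i \to 1
\]
as in the proof of Corollary~\ref{general_property}. However, I will not feed the Tits Alternative into properties (P1) and (P2) abstractly. The reason is that extension-closure of ``virtually solvable or contains $F_2$'' is delicate for general groups, since one would need a solvable finite-index subgroup of the kernel that is normal in the whole group. Instead I will exploit the very special nature of the kernel. By Corollary~\ref{intro_cor}(ii), since the $G_i$ are countable, $\KG$ is a retract, and in particular a subgroup, of $[A_\G,A_\G]$. Here $A_\G$ is a finitely generated right-angled Artin group because $\G$ is finite.

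\textbf{Step 1.} I will invoke the strong Tits Alternative for right-angled Artin groups, due to Baudisch: every subgroup of a right-angled Artin group is either free abelian or contains a non-abelian free subgroup. Applied to $\KG \le A_\G$, this gives a dichotomy. If $\KG$ contains $F_2$, then so does $\GG$ and we are done. Otherwise $\KG$ is abelian.

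\textbf{Step 2.} Treat the quotient $Q=\prod_i G_i$. If some $G_i$ contains $F_2$, then so does $\GG$, because $G_i$ embeds in $\GG$ as a vertex group. Otherwise each $G_i$ is virtually solvable by hypothesis. A finite product of virtually solvable groups is virtually solvable, since the product of the solvable finite-index subgroups is solvable of finite index. So $Q$ has a solvable subgroup $Q_0$ of finite index.

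\textbf{Step 3.} Combine the two steps. Let $P$ be the preimage of $Q_0$ in $\GG$. Then $P$ has finite index in $\GG$ and sits in an extension $1\to \KG\to P\to Q_0\to 1$, whose kernel is abelian and whose quotient is solvable. Hence $P$ is solvable and $\GG$ is virtually solvable.

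The only nontrivial input is Step 1, the strong dichotomy for subgroups of right-angled Artin groups. That dichotomy is what lets the argument avoid the normality and characteristic-subgroup issue that would otherwise be the main obstacle in pushing virtual solvability through an extension. If one preferred to stay within the paper's (P1)/(P2) framework, the hard step would be proving (P2) for this class. I would handle it exactly as above, by first splitting on whether the normal subgroup contains $F_2$ and then using that the relevant kernel is actually abelian.
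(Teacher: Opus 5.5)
Your argument is correct, but it differs from the paper's at both of its inputs. The paper works inside the (P1)/(P2) framework of Corollary~\ref{general_property}. It takes $\C$ to be the countable groups that are virtually solvable or contain $F_2$. It gets (P1) from linearity of finitely generated right-angled Artin groups over $\Z$ together with Tits' theorem for linear groups. It gets (P2) from the one-line assertion that virtually-solvable-by-virtually-solvable groups are virtually solvable.

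You instead use Baudisch's sharper dichotomy, so that $\KG$ is abelian unless it contains $F_2$. The only extension you then need is abelian-by-solvable, on the preimage of a solvable finite-index subgroup of $\prod_i G_i$. You handle an $F_2$ in the quotient through the embedding $G_i\le \GG$, rather than by lifting a free subgroup from the quotient as (P2) implicitly requires. Your route is more self-contained at the extension step and extracts more structure from the kernel. The paper's route is uniform, reusing one corollary for several hereditary properties.

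Your stated reason for avoiding (P2) is overstated, though. Suppose $A\lhd B$ is virtually solvable, with a normal solvable finite-index subgroup $N$. Only finitely many normal subgroups of $A$ contain $N$. So the subgroup generated by all normal solvable $M\lhd A$ (equivalently, by the finitely many $MN$) is solvable, of finite index, and characteristic in $A$, hence normal in $B$. A finite-by-(virtually solvable) group is virtually solvable: pass to the centralizer of the finite normal subgroup. So the paper's (P2) claim is sound; your proof simply does not need it.
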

\begin{proof} Let $\C$ be the class of all countable groups that are either virtually solvable or contain a non-abelian free subgroup. It is easy to see that $\C$ satisfies properties (P2), since any extension of a virtually solvable group by a virtually solvable group is again virtually solvable.

By Corollary 3.6 of \cite{Hsu_Wise_99}, any finitely generated right-angled Artin group is linear over $\Z$. Hence, any of its subgroups are either virtually polycyclic or contain a non-abelian free subgroup. So, by Corollary \ref{general_property}, $\GG\in \C$.    
\end{proof}

\subsection{Cohomological dimension.}

Before stating the next corollary, we recall the definition of the Bestvina--Brady group associated to a flag complex.  
Let $L$ be a finite flag complex, and let $A_L$ denote the right-angled Artin group on the 1-skeleton of $L$. Consider the canonical homomorphism
\[
\phi: A_L \longrightarrow \mathbb{Z}, \quad v \mapsto 1 \text{ for each vertex } v \in V(L),
\]
which sends each standard generator of $A_L$ to $1 \in \mathbb{Z}$. The \emph{Bestvina--Brady group} $B_L$ is defined as the kernel of this homomorphism:
\[
 B_L := \ker(\phi) \triangleleft A_L.
\]

\begin{cor}\label{cor_vcd}
Let $L$ be an $n$-dimensional finite flag complex and let $G_L$ be the graph product of nontrivial countable groups $\{G_i\}_{i \in V(L)}$. 
\begin{enumerate}[(i)]
    \item \label{part 1} If $H^n(L; \Z)\ne 0$, then $\cd K_L = \vcd W_L =\cd B_L=\cd A_L=n+1$.
   \item \label{part 2} If $H^n(L; \Z)=0$, then $\cd K_L\leq n$. 
In addition, if there is a full subcomplex $Q\subseteq L$ such that $H^{n-1}(Q; \Z)\ne 0$, then $\cd K_L = \vcd W_L =\cd B_L=n$.

   \end{enumerate}
\end{cor}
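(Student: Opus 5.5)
The approach is to reduce everything to two extreme cases via the retractions of Corollary~\ref{intro_cor}. By part (iii) of that corollary, $[W_L,W_L]$ is a retract, hence a subgroup, of $K_L$. By part (ii), $K_L$ is a retract of $[A_L,A_L]$. This gives
\[
\cd [W_L,W_L]\le \cd K_L\le \cd [A_L,A_L]\le \cd A_L .
\]
Since $[W_L,W_L]$ is torsion-free of finite index in $W_L$ (it is the kernel of $W_L\to C_2^m$, and $K_L$ is torsion-free, e.g.\ by Corollary~\ref{cor:poly-free_kernel}), we have $\cd [W_L,W_L]=\vcd W_L$. The main inputs will then be the known formulas:
\begin{enumerate}[(a)]
\item $\cd A_L=n+1$, since the Salvetti complex has dimension $n+1$ and $A_L$ contains $\Z^{n+1}$.
\item The Davis complex of $W_L$ is a cubical model of dimension $n+1$, and $\vcd W_L=\max\{k : \bar H^{k-1}(L_\sigma$ or a full subcomplex$)\neq 0\}$, by Davis's formula \cite{Davis_book}; concretely, $\vcd W_L$ is the largest $k$ such that $H^{k-1}(L-\sigma)$ or $H^{k-1}$ of some full subcomplex is nonzero, with the convention that $\vcd=n+1$ exactly when $H^n(L)\neq0$ (via $H^*(W_L;\Z W_L)\cong\bigoplus_T \bar H^{*-1}(L_{S-T})$-type formulas).
\item $\cd B_L$ from Bestvina--Brady/Leary--Saadeto\u glu: $\cd B_L=n+1$ if $H^n(L)\neq0$ and $\cd B_L=n$ otherwise (for $n\ge1$).
\end{enumerate}

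For part~(i), $H^n(L;\Z)\neq0$ gives $\vcd W_L=n+1$ by Davis's formula, since the cohomology with group-ring coefficients has the summand $H^n(L)$ in degree $n+1$. Squeezing then gives $\cd K_L=n+1=\cd A_L$, and (c) gives $\cd B_L=n+1$.

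For part~(ii), the upper bound is done geometrically rather than through $A_L$. We use Proposition~\ref{prop:PV}: $(E\underline G,\underline G)^L$ is a $BK_L$, and by Corollary~\ref{intro_cor} we may replace it by the $\Z$-model $(\R,\Z)^L$, which is a cubical complex of dimension $n+1$. Alternatively, and more cleanly, we use the $C_2$-model $(D^1,S^0)^L$ together with a retraction argument. Better still, we use $\cd K_L\le\cd[A_L,A_L]$ and compute $\cd[A_L,A_L]$ via the real moment-angle-type complex $(\R,\Z)^L$, whose top cells are $(n+1)$-cubes indexed by top simplices $\sigma$ with $\Z^{m-|\sigma|}$ coordinates fixed. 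Then $H^{n+1}(K;M)$ is a cokernel of the top coboundary, and the obstruction to killing it is the cohomology $H^n(\mathrm{lk})$. The claim $\cd\le n$ when $H^n(L)=0$ then follows by the standard criterion: for an $(n+1)$-dimensional contractible cover, $\cd\le n$ iff $H^{n+1}_c$ vanishes, computed locally by $H^n$ of $L$. This step is the main obstacle. The cleanest route I would try is to show $\cd [A_L,A_L]=\cd B_L$-type behaviour by the Bestvina--Brady Morse argument applied to the $[A_L,A_L]$-cover of the Salvetti complex (ascending/descending links equal $L$ up to joins), which gives a complex of dimension $n$ up to homotopy when $H^n(L)=0$, via Wall-type elimination of top cells (for $n\ge3$; low $n$ handled separately).

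For the lower bound in~(ii), the assumption that a full subcomplex $Q$ has $H^{n-1}(Q)\neq0$ gives $\vcd W_L\ge n$ by Davis's formula. Hence $n\le\vcd W_L\le\cd K_L\le n$, and $\cd B_L=n$ by~(c).
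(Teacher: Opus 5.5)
\textbf{Gap: the upper bound in part (ii).} Part (i) and the lower bound in part (ii) are fine and coincide with the paper's argument: the sandwich $[W_L,W_L]\le K_L\le[A_L,A_L]$ from Corollary~\ref{intro_cor}, Davis's formula for $\vcd W_L$ and $\vcd W_Q$, and $\cd A_L=n+1$. The upper bound $\cd K_L\le n$ in part (ii) is never proved. You flag it yourself as the main obstacle, list several routes, and end with one you ``would try''. The sketched routes also do not work as stated:
\begin{enumerate}[(1)]
\item The criterion that $\cd\le n$ iff $H^{n+1}_c$ of an $(n+1)$-dimensional contractible cover vanishes needs a cocompact (type $F$) action. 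But $[A_L,A_L]$ is not even finitely generated once $L$ is not a simplex: the retraction of $A_L$ onto the free group on two non-adjacent vertices maps it onto $[F_2,F_2]$. So $(\R,\Z)^L$ is an infinite complex and the criterion is unavailable.
\item Wall-type elimination of top cells turns an already known bound $\cd\le n$ into a bound on geometric dimension. It presupposes what you are trying to prove.
\item The Bestvina--Brady Morse argument works with level sets in the universal cover of the Salvetti complex, i.e.\ with $B_L$. Getting $\cd\le n$ from them when $H^n(L;\Z)=0$ but $L$ is not acyclic is exactly the nontrivial content of the Leary--Saadeto\u{g}lu theorem.
\end{enumerate}

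\textbf{The fix.} The missing observation is elementary: $B_L$ is the kernel of a homomorphism from $A_L$ to the abelian group $\Z$, so $[A_L,A_L]\le B_L$. Cohomological dimension does not increase on passing to subgroups, so the Leary--Saadeto\u{g}lu bound you already quote in (c) gives
\[ \cd K_L\le \cd[A_L,A_L]\le \cd B_L\le n \]
directly. This is precisely the paper's proof. With $B_L$ inserted, your chain becomes $[W_L,W_L]\le K_L\le[A_L,A_L]\le B_L\le A_L$. The values of $\cd B_L$ in both parts then follow by squeezing: in (i) from $\vcd W_L=\cd A_L=n+1$, and in (ii) from $n\le \vcd W_L$. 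So the only external input about $B_L$ you need is $\cd B_L\le n$ when $H^n(L;\Z)=0$.

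\textbf{Minor point.} Davis's formula is stated in reduced cohomology, so the lower-bound hypothesis should be read as $\bar H^{n-1}(Q)\neq 0$. This only makes a difference when $n=1$.
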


\begin{proof} By Corollary \ref{intro_cor}, $[W_L, W_L]\leq K_L\leq [A_L, A_L]$. Clearly, $[A_L, A_L]\leq B_L\leq A_L$. 

 If $H^n(L; \Z)\ne 0$, from Corollary 8.5.5 of \cite{Davis_book}, it follows the $\vcd W_L=\cd [W_L, W_L]=n+1$. On the other hand, $\cd A_L=n+1$. The equalities in (\ref{part 1}) now follow by a comparison of dimensions.

 If $H^n(L; \Z)=0$, then by Theorem 22 of \cite{LS2011}, $\cd B_L\leq n$. Thus, $\cd K_L\leq n$.
 
 Suppose, in addition that there is a full subcomplex $Q\subseteq L$ such that $H^{n-1}(Q; \Z)\ne 0$, then by Corollary 8.5.5 of \cite{Davis_book}, $\vcd W_L \geq \vcd W_Q\geq n$.  The equalities in (\ref{part 2}) then follow by a comparison of dimensions.
 \end{proof}

 \subsection{Brown's question}\label{sec:Brown} A model for $\uE G$ is proper $G$-CW complex $X$ such that for any finite $F\leq G$, the fixed point set $X^F$ is contractible. Such an $X$ always exists and is unique up to $G$-homotopy. Let $\ugd G$ denote the minimal dimension of any model for $\uE G$. 
 
 One of Brown's questions \cite[Question 2, page 32]{brown_1979}, see also \cite[Question 2]{CK_1986} and \cite[VIII.11]{brown1982cohomology}, asks whether $\ugd G=\vcd G$ for any virtually torsion-free group $G$ that admits a cocompact model for $\uE G$. In \cite{LP_2017}, we gave the first counterexamples to this question. These groups were finite extensions of the commutator subgroup of certain right-angled Coxeter groups. The next application shows that similar counterexamples exist for more general graph products. 

 \begin{cor}\label{cor:Brown_question} For $n\geq 2$,
let $L$ be the $n$-dimensional flag complex determined by a finite simplicial graph $\G$. Let $H$ be a countable nontrivial group and denote by $\GG$  the graph product of  $\{G_i\}_{i \in V(\G)}$ such that each $G_i=H$. Suppose $Q$ is a finite subgroup of automorphisms of $\G$ and $G$ be the corresponding semidirect product $\KG\rtimes Q$. Denote by $L^{\rm sing}$ the subcomplex of $L$ consisting of points with nontrivial stabilizer in $Q$. If $H^n(L)=0$ and $H^n(L, L^{\rm sing})\ne 0$, then
$$\vcd G\leq n \;\;\; \mbox{and}\;\;\; \ugd G=n+1.$$
Furthermore, $\vcd G=n$, if either $L$ is a barycentric subdivision or $L^{\rm sing}$ is a full subcomplex of $L$.
\end{cor}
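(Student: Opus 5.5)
The plan is to reduce to the right-angled Coxeter case treated in \cite{LP_2017}, using the functoriality of Theorem~\ref{thm:kernel_functoriality} to compare $\KG\rtimes Q$ with $[W_L,W_L]\rtimes Q$ and $[A_L,A_L]\rtimes Q$. Since all vertex groups equal $H$, any automorphism of $\G$ permutes the vertex groups and so acts on $\GG$, preserving $\KG$; this is what defines $G=\KG\rtimes Q$. The first point to check is that the homomorphisms $\Phi$ of Theorem~\ref{thm:kernel_functoriality} are $Q$-equivariant when the same set map $f\colon H\to H'$ is used at every vertex. This should follow from Theorem~\ref{thm:more_general_kernel_functoriality} applied with $\psi\in Q$, because the square of morphisms ($\psi$ with identity set maps, and the identity with the maps $f$) commutes in the source category. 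It can also be read off directly from the explicit formula of Proposition~\ref{prop:explicit_homomorphism}. Consequently $\Phi$ extends to homomorphisms of semidirect products, and by the arguments behind Corollary~\ref{intro_cor}(ii),(iii) we obtain $Q$-equivariant retractions
\[
[W_L,W_L]\rtimes Q \;\hookrightarrow\; \KG\rtimes Q \;\hookrightarrow\; [A_L,A_L]\rtimes Q,
\]
each piece being a retract of the next. For (ii) we pick $H\hookrightarrow\Z$ and $\Z\to H$ composing to the identity. For (iii) we choose $C_2\hookrightarrow H$ sending the generator to some $h\neq e$, together with a map $H\to C_2$ that sends $e$ to $e$, $h$ to the generator, and everything else arbitrarily.

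For the upper bound on $\vcd$, note that $\KG$ is torsion-free: it is a subgroup of $[A_L,A_L]$, and $(E\underline G,\underline G)^L$ is a $K(\KG,1)$. Hence $\vcd G=\cd\KG$, and Corollary~\ref{cor_vcd}(ii) gives $\cd\KG\le n$ because $H^n(L)=0$.

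For $\ugd G=n+1$, the upper bound comes from a cocompact model. By the Remark after the proof, the barycentric subdivision of $\Delta_H$ is a model for $\uE H$, but it is infinite-dimensional when $H$ is infinite, so I would instead use the polyhedral product of $(D^1,S^0)$-type pieces only in the $W$ case. In general I would argue $\ugd G\le n+1$ by taking the model $(\Delta_{\underline G},\underline G)^L$ truncated appropriately, or more simply by noting that $[A_L,A_L]\rtimes Q$ acts on the universal cover of the Salvetti complex, which is an $(n+1)$-dimensional CAT(0) cube complex. For a subgroup of such a group this gives $\ugd\le n+1$, since fixed sets of finite subgroups are convex and hence contractible. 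For the lower bound, \cite{LP_2017} shows $\ugd([W_L,W_L]\rtimes Q)=n+1$ under the hypothesis $H^n(L,L^{\rm sing})\neq 0$. The key step is to transfer this lower bound along the retraction $r\colon G\to [W_L,W_L]\rtimes Q$. The obstruction in \cite{LP_2017} is a nonvanishing Bredon cohomology class, or equivalently nonvanishing $H^{n+1}$ of the pair $(\uE, \uE^{\rm sing})$. I expect this transfer to be the main obstacle. I would try to show that the retraction composed with the inclusion induces an identity on Bredon cohomology of the subgroup, so that the class pulled back to $G$ and restricted back is nonzero. This should be handled by realizing both maps geometrically through the polyhedral product maps of Proposition~\ref{prop:functorial}, which are $Q$-equivariant and respect fixed sets.

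Finally, for $\vcd G=n$ under the extra hypotheses, \cite{LP_2017} shows that $\vcd([W_L,W_L]\rtimes Q)=\cd[W_L,W_L]=n$ whenever $L$ is a barycentric subdivision or $L^{\rm sing}$ is full. In either case this rests on Davis's formula giving a full subcomplex with nonzero $H^{n-1}$. Since $[W_L,W_L]\le\KG$, we get $n\le\cd\KG\le n$.
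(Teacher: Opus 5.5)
Your treatment of $\vcd G$ follows the paper: the upper bound comes from Corollary~\ref{cor_vcd}, and the lower bound from $[W_L,W_L]\le\KG$ together with \cite{LP_2017}. Your value $n$ is the correct one; the paper's text at that point says $n+1$, which is a slip. Your check that $\Phi$ is $Q$-equivariant is exactly what makes $G'=[W_L,W_L]\rtimes Q$ a subgroup of $G$.

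\textbf{The gap: the lower bound $\ugd G\ge n+1$.} You leave this as an unfinished plan to transport a Bredon cohomology class along the retraction $r$, and you flag it as the main obstacle. No such transfer is needed, and the retraction plays no role. Let $X$ be any model for $\uE G$. With the restricted action, $X$ is a model for $\uE G'$: stabilisers in $G'$ remain finite, and $X^F$ is contractible for every finite $F\le G'$, since $F$ is also a finite subgroup of $G$. Hence $\ugd G\ge\ugd G'=n+1$. This is the paper's one-line argument, and it is the same subgroup monotonicity you already invoke for your upper bound. Your Bredon route would in any case need care: $r$ has infinite kernel, so pulling $\uE G'$ back along $r$ does not give a proper $G$-complex.

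\textbf{The upper bound: a different but valid route.} You embed $G$ into $A_L\rtimes Q$ via the $Q$-equivariant embedding $\KG\hookrightarrow[A_L,A_L]$, and let it act on the universal cover of the Salvetti complex, an $(n+1)$-dimensional CAT(0) cube complex. This is correct and uses only the functoriality already established.

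The paper instead lets $G_L\rtimes Q$ act on the Davis building of the graph product, an $(n+1)$-dimensional CAT(0) cube complex on which $\KG$ acts freely. That route avoids the right-angled Artin group altogether. When $H$ is finite it also yields a cocompact model for $\uE G$, which the paper relies on in its subsequent remark. Your Salvetti action of $G$ is never cocompact, because $G$ has infinite index in $A_L\rtimes Q$, which acts cocompactly on a locally finite complex. Your aside about truncating $(\Delta_{\underline G},\underline G)^L$ is unnecessary and can be dropped.
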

\begin{proof}  By Corollary \ref{intro_cor}(iii), $[W_L, W_L]$ is a retract of $K_L$. Denote by $G'$ the corresponding semidirect product $[W_L, W_L]\rtimes Q$. It follows that $G'$ is a retract of $G$. By Theorem 1.1 of \cite{LP_2017}, $\ugd G'=n+1$. Therefore, $\ugd G\geq n+1$.

To show the opposite inequality, we recall that $G_L$ acts on the Davis building $X$ with stabilisers precisely the conjugates of the vertex group $G_i$ \cite{Davis_98}. This building is a CAT(0) cube complex where cubes of maximal dimension are homeomorphic to the cone on the first barycentric subdivision of $L$. Therefore, $X$ has dimension $n+1$. The action of $Q$ on $L$ combines with action of $G_L$ on $X$ to give an action of $G_L\rtimes Q$ on $X$. This is evident from the panel structure of $X$ (see for example \cite[II.12]{bridson2013metric} and also \cite[\S 3]{PP_24}). Since $K_L$ acts freely on $X$, the restriction of this action to  $G$ is proper. Since $X$ is CAT(0), it follows that it is a model for $\uE G$ of dimension $n+1$. Therefore, $\ugd G= n+1$. 

By Corollary \ref{cor_vcd}, $\vcd G\leq n$. If $L$ is a barycentric subdivision or $L^{\rm sing}$ is a full subcomplex of $L$, then by Theorem 1.1 of \cite{LP_2017}, $\vcd G'=n+1$, which implies that $\vcd G=n+1$.    
\end{proof}

\begin{example}\label{example_1}\rm
Let $Q = A_5$, and let $L$ be the acyclic 2-dimensional flag complex from \cite[Example 1]{LP_2017}, with a simplicial action of $Q$. 
Denote by $L^{\rm sing}$ the subcomplex consisting of points with nontrivial $Q$-stabilizer. Then, $L$ satisfies  $H^2(L, L^{\rm sing}) \neq 0.$

For any countable nontrivial group $H$, let $\GG$ be the graph product of copies of $H$ over the graph $\Gamma$ determined by $L$, and set $G := \KG \rtimes Q$.  
Then, by Corollary \ref{cor:Brown_question}, $\vcd G \le 2$ and $\ugd G = 3$. 
Furthermore, since $L$ is the barycentric subdivision of a polygonal complex, we have $\vcd G = 2$.
\end{example}

\begin{example}\label{example_2}\rm
Fix distinct primes $p$ and $q$, and let $Q$ be cyclic of order $q$. Let $L$ be a contractible $Q$-CW-complex constructed as in \cite[Example 2]{LP_2017}, with $L^Q = L^{\rm sing}=M(1,p)$ mod-$p$ Moore space. Then, $H^3(L, L^{\rm sing}) \neq 0$. 
For any countable nontrivial group $H$, let $\GG$ be the graph product of copies of $H$ over the corresponding graph, and set $G := \KG \rtimes Q$.  
Then, by Corollary \ref{cor:Brown_question}, $\vcd G \le 3$ and $\ugd G = 4$. 
\end{example}

\begin{rmk} \rm When $H$ is finite, both examples exhibit groups $G$ that admit cocompact models for $\uE G$. This follows from the fact that $G_L\rtimes Q$ acts cocompactly on the Davis building $X$ and and since $G$ is a finite index subgroup, so does $G$.
\end{rmk}

\begin{rmk} \rm Just as in \cite[Theorem 1.1]{LP_2017}, by taking a direct product of copies of $G$ satisfying the conclusion of Corollary~\ref{cor:Brown_question}, one obtains that
$$\vcd G^m\leq m\cdot n \;\;\; \mbox{and}\;\;\; \ugd G^m=m\cdot n+m, \;\; \forall m\in \mathbb N.$$

\end{rmk}

\subsection{Eilenberg--Ganea conjecture} A well-known conjecture of Eilenberg and Ganea predicts the following.
\begin{conjecture}[Eilenberg--Ganea, 1957] If a group $G$ has $\cd G=2$, then there exists a $2$-dimensional $K(G,1)$-complex. 
\end{conjecture}

 In  \cite[Theorem~8.7]{BB_1997}, Bestvina and Brady  proved that if $L$ is a flag triangulation of a spine of the Poincar\'{e} homology sphere (see also Example \ref{example_1}), then $B_L$ is a counterexample to either the Eilenberg--Ganea Conjecture or the Whitehead Conjecture \cite {whitehead_41}.

 In \cite{LP_2017}, we showed that for the same acyclic flag $2$-complex $L$, $\vcd W_L=2$ but $W_L$ does not act properly and cellularly on any contractible $2$-complex. If $[W_L, W_L]$ was a counterexample to the Eilenberg--Ganea Conjecture, then it would also be a counterexample to Wall's D(2) Problem \cite[D3]{wall_d2}.

In his discussion of the Eilenberg--Ganea Conjecture, Davis remarks that the 
 Bestvina--Brady group \(B_L\) is a ``more convincing candidate  for a counterexample'' than  \([W_L,W_L]\) 
(see \cite[p.~156]{Davis_book}). The following formalizes this relation.

 \begin{cor} Let $L$ be a flag triangulation of a spine of the Poincar\'{e} homology sphere. If $[W_L, W_L]$ is a counterexample to the Eilenberg--Ganea Conjecture, then $B_L$ is  a counterexample as well.
 \end{cor}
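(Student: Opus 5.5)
We need to show that if $[W_L,W_L]$ is a counterexample to Eilenberg--Ganea, then so is $B_L$. A counterexample here means a group of cohomological dimension $2$ that admits no $2$-dimensional $K(G,1)$. The plan is to establish the two halves of this separately for $B_L$: that $\cd B_L = 2$, and that a $2$-dimensional $K(B_L,1)$ would produce a $2$-dimensional $K([W_L,W_L],1)$.

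\textbf{Cohomological dimension.} Since $L$ is a $2$-dimensional acyclic flag complex, $H^2(L;\Z)=0$. Part (ii) of Corollary~\ref{cor_vcd} then gives $\cd B_L\le 2$. Bestvina--Brady's analysis of this example shows $B_L$ is not free, for instance because it is of type $FP$ with cohomological dimension exactly $2$. Alternatively, the hypothesis gives $\cd [W_L,W_L]=2$, and the lower bound for $B_L$ will follow from the retraction established below. Either way, $\cd B_L=2$.

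\textbf{Transferring a $2$-dimensional model.} By Corollary~\ref{intro_cor}, applied with every vertex group $\Z$ and with $f_i:C_2\to\Z$, $f_i':\Z\to C_2$ as in Example~\ref{ex:W_A}, the group $[W_L,W_L]$ is a retract of $[A_L,A_L]$. Next compose with inclusions: $[A_L,A_L]\le B_L$. I would upgrade this to a retraction of $B_L$ itself onto $[W_L,W_L]$. The natural attempt is to extend $\Psi:[A_L,A_L]\to[W_L,W_L]$ over $B_L$, using the fact that the set-map construction of Proposition~\ref{prop:explicit_homomorphism} is defined on all of $A_L$ as a set map.

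The key observation is that, for $f_i':\Z\to C_2$ the reduction mod $2$, the recursive formula gives
\[
\mathbf f(w)^{-1}\mathbf f(ws)=f_i'(g_i)^{-1}f_i'(g_is)=f_i'(s).
\]
So the induced set map $A_L\to W_L$ is exactly the canonical homomorphism $\rho:A_L\to W_L$ given by reduction mod $2$ on generators. Its restriction to $B_L$ is a homomorphism $B_L\to W_L$. The image of $B_L$ lies in the even subgroup of $W_L$, but not necessarily in $[W_L,W_L]$. This is the main obstacle, and there are two fallback routes.

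\textbf{Routes around the obstacle.} The first thing I would try is a subgroup-and-covering argument: take $\Lambda=\rho^{-1}([W_L,W_L])\cap B_L$. This is a finite-index subgroup of $B_L$, since $W_L/[W_L,W_L]$ is finite. It contains $[A_L,A_L]$, and $\rho$ restricted to it is a homomorphism onto $[W_L,W_L]$ that is split by the inclusion-free retraction above, namely $\Phi$ followed by the inclusion. Hence $[W_L,W_L]$ is a retract of $\Lambda$.

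Now suppose $B_L$ has a $2$-dimensional $K(B_L,1)$. Its finite cover corresponding to $\Lambda$ is a $2$-dimensional $K(\Lambda,1)$. A retract of a group with a $2$-dimensional $K(\pi,1)$ also has one: take the covering space corresponding to $[W_L,W_L]$. Strictly, this cover only shows $\cd\le2$ rather than giving a model for the retract. The correct tool is that a retraction $r:\Lambda\to[W_L,W_L]$ with section $i$ realizes $K([W_L,W_L],1)$ as a homotopy retract of a $2$-complex. A homotopy retract of a $2$-complex is dominated by it, and by Wall's results a group dominated by a $2$-complex admits a $2$-dimensional model up to the D(2) issue.

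To avoid D(2), I would instead use the second route. Realize the retraction geometrically: Proposition~\ref{prop:functorial} gives maps of polyhedral products $(\R,\Z)^L\to(D^1,S^0)^L$ and back, composing to the identity. Together with the Bestvina--Brady Morse-theoretic level set, which is a $2$-complex model, this restricts to the level set and yields an honest $2$-dimensional $K([W_L,W_L],1)$ as a retract (or a cover) of a $2$-dimensional $K(B_L,1)$. The contrapositive then gives the corollary.
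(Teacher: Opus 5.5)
There is a genuine gap, and you created it yourself: you discard the correct argument because of a false claim. If $Y$ is a $2$-dimensional $K(B_L,1)$ and $H\le B_L$ is \emph{any} subgroup, the covering space of $Y$ corresponding to $H$ is a $2$-dimensional CW complex. Its universal cover is that of $Y$, hence contractible. So it is an honest $2$-dimensional $K(H,1)$, not merely a witness to $\cd H\le 2$. This is exactly the paper's proof. Corollary~\ref{intro_cor}(iii), with all vertex groups $\Z$, embeds $[W_L,W_L]$ in $[A_L,A_L]\le B_L$, and $\cd B_L=\cd[W_L,W_L]=2$. Hence a $2$-dimensional $K(B_L,1)$ would produce a $2$-dimensional $K([W_L,W_L],1)$. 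Only an injective homomorphism is needed. Your finite-index subgroup $\Lambda$ and the splitting of $\rho|_\Lambda$ are correct but superfluous.

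The two routes you actually rely on do not close the argument. The first ends ``up to the D(2) issue'', so by your own account it is incomplete. The second rests on a false premise: the Bestvina--Brady level set is not a $2$-dimensional model for $K(B_L,1)$ here. The level set is homotopy equivalent to a wedge of copies of $L$, and $\pi_1(L)$ is the binary icosahedral group, so it is acyclic but not simply connected. Indeed $B_L$ is not even finitely presented when $L$ is not simply connected, so the compact quotient of the level set cannot be a $K(B_L,1)$. Whether any $2$-dimensional $K(B_L,1)$ exists is precisely the question at issue. Moreover, the maps between $(\R,\Z)^L$ and $(D^1,S^0)^L$ do not preserve level sets of the height function, so ``restricting to the level set'' is not defined. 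Replacing both fallbacks with the covering-space argument above gives a complete proof.
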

 \begin{proof}By Corollary \ref{intro_cor} (iii), $[W_L, W_L]$  embeds as a subgroup in $[A_L, A_L]$ which in turn is a subgroup of $B_L$. Since $\cd [W_L, W_L]= \cd B_L=2$, the claim follows.
     
 \end{proof}

\bibliographystyle{alpha}
\bibliography{petrosyan_refs}

\end{document}